\documentclass[a4paper]{amsart}
\usepackage[utf8]{inputenc}
\usepackage[T1]{fontenc}

\usepackage{times}

\usepackage{xcolor}

\usepackage[pagebackref,colorlinks=true,pdfpagemode=none,urlcolor=blue,
linkcolor=blue,citecolor=blue]{hyperref}

\usepackage{amsmath,amsfonts,amssymb,amsthm} 
\usepackage{amsthm, amssymb, amsmath, amsfonts, mathrsfs}

\usepackage{mathrsfs}
\usepackage{MnSymbol}
\usepackage{scalerel} 

\usepackage{color}
\usepackage{soul}
\usepackage{cancel}
\usepackage{accents}

\usepackage{bbm}

\definecolor{labelkey}{gray}{.8}
\definecolor{refkey}{gray}{.8}

\definecolor{darkred}{rgb}{0.9,0.1,0.1}
\definecolor{darkgreen}{rgb}{0,0.5,0}

\newcommand{\tnorm}[1]{{\left\vert\kern-0.25ex\left\vert\kern-0.25ex\left\vert #1 
    \right\vert\kern-0.25ex\right\vert\kern-0.25ex\right\vert}}

\newtheorem{theorem}{Theorem}[section]
\newtheorem{lemma}[theorem]{Lemma}

\newtheorem{corollary}[theorem]{Corollary}
\newtheorem{proposition}[theorem]{Proposition}

\theoremstyle{remark}

\renewenvironment{proof}[1][Proof]{ {\itshape \noindent {#1.}} }{$\Box$
\medskip}

\numberwithin{equation}{section}

\newcommand{\rH}{\mathrm{H}}
\newcommand{\rC}{\mathrm{C}}
\newcommand{\cal}{\mathcal}
\newcommand{\al}{\alpha}
\newcommand{\R}{\mathbb{R}}
\newcommand{\bbR}{\mathbb{R}}
\newcommand{\bbZ}{\mathbb{Z}}

\newcommand{\PP}{\mathbf{P}}
\newcommand{\bbT}{\mathbb{T}}

\newcommand{\Om}{\Omega}
\newcommand{\lam}{\lambda}

\newcommand{\Q}{\mathcal{Q}}

\newcommand{\U}{\mathcal{U}}

\newcommand{\eps}{\varepsilon}

\newcommand{\la}{\langle}
\newcommand{\ra}{\rangle}

\newcommand{\om}{\omega}
\newcommand{\EE}{\mathbf{E}}

\newcommand{\cP}{\mathcal{P}}

\newcommand{\bT}{\mathbb{T}}
\newcommand{\cZ}{\mathcal{Z}}

\definecolor{orange}{rgb}{1.0, 0.5, 0.0}

\newcommand{\mm}[1]{{\color{black}#1}}

\begin{document}

\title{The Gaussian structure of a perturbed KPZ}

\date{\today }

\author{Yu Gu, Tomasz Komorowski}

\address[Yu Gu]{Department of Mathematics, University of Maryland, College Park, MD, 20742 USA}

\address[Tomasz Komorowski]{Institute of Mathematics, Polish Academy of Sciences, ul.
Śniadeckich 8, 00-636 Warsaw, Poland}

\begin{abstract}
 We study the KPZ equation on a circle with an additive spatial
 perturbation $\partial_t h=\tfrac12\Delta h+\tfrac12|\nabla h|^2+
 \xi+ V$, where $\xi$ is a spacetime white noise and $V$ is a smooth
 spatial function. When $V=0$, it is well-known that the unique
 invariant measure is the Brownian bridge. In the presence of the
 perturbation, we show that the equation admits a unique invariant
 measure that is absolutely continuous with respect to the Brownian
 bridge. We further prove the measure has a finite relative entropy
 with respect to the law of the bridge and that, for any $p\in(1,\infty)$, the corresponding  Radon-Nikodym derivative belongs to $L^p$, provided that $\int_{\bT} V^2(x)dx$ is sufficiently small. The proof uses the discretization and mollification scheme of \cite{FQ}, together with an application of the log-Sobolev and spectral gap inequalities for the underlying Gaussian measure.
\end{abstract}
\maketitle

\section{Introduction}\label{intro}

\textbf{Main result. }The goal of this paper is to obtain \emph{quantitative} information on the invariant measures of the KPZ equation under a certain type of perturbations. The KPZ equation takes the form \begin{equation}\label{e.maineq}
\partial_th=\tfrac12\Delta h+\tfrac12|\nabla h|^2+\xi.
\end{equation} It is well-known from  \cite{BG97} that, if $\xi$ is a 1+1 spacetime white noise, the above equation admits the two-sided Brownian motion with an arbitrary drift as an invariant measure, modulo a height shift. The uniqueness of such invariant measures was proved in \cite{DS26,HM18,JRS22}. While the Brownian motion is scaling invariant, one should connect its ``large scale'' diffusive behavior to the roughness exponent $\tfrac12$ in the 1+1 KPZ universality class, and it has been an important open problem to prove such large scale diffusive behaviors for general growth models. 
%

While the most general cases are out of reach, it is tantalizing  to study some perturbed version of \eqref{e.maineq}. For example, one may consider a smooth noise, which seems natural as the small scale properties of the randomness should not affect the large scale statistical behaviors of the solutions. Along this line, there is the work on the case of a  kick forcing \cite{BL19} and the case of a spatial mollification of the white noise \cite{DGR21}. For the constructed measures however, almost nothing is known beyond  some basic regularity and integrability. A different way of perturbing the equation is to keep   the noise singular to stay in the ``elliptic'' setting and ask what happens to the Brownian invariance if we add a smooth perturbation. For example, in the periodic setting, one may smoothly perturb the covariance structure by removing a few low frequency modes of the noise. 
 
  In this paper, we consider a simple perturbation through shifting the noise by  a deterministic, smooth, spatial function $V(\cdot)$, and study the following equation in the periodic setting \begin{equation}\label{e.maineq1}
\begin{aligned}
\partial_th=\tfrac12\Delta h+\tfrac12|\nabla h|^2+\xi+V.
\end{aligned}
\end{equation}
 We assume that $V:\bT\to\R$ is smooth and $\xi$ is a spacetime white noise on $\R\times \bT$, with $\bT$ the unit circle. 
 
Due to the global growth of the height function, the invariant measure for the KPZ refers to that of $(h(t,\cdot)-h(t,0))$, or the spatial derivative $(u(t))=(\nabla h(t))$, which solves the corresponding stochastic Burgers equation:
\begin{equation}\label{e.631}
\partial_tu=\tfrac12\Delta u+\tfrac12\nabla u^2+\nabla \xi+\nabla V.
\end{equation} Here are the main results of the paper (see Theorems~\ref{thm012504-26} and \ref{cor022404-26} for a precise statement):

(i) there exists a unique invariant measure $\nu_V$ for $(u(t))$, and $u(t)$ converges exponentially fast to it as $t\to\infty$;
 
(ii) $\nu_V$ is absolutely continuous with respect to $\nu_0$, the zero-mean spatial white noise;

(iii) $\nu_V$ has a finite relative entropy with respect to
$\nu_0$. For any $p\in(1,\infty)$, if we further assume $\int_{\bT}
V^2(x)dx\leq  Cp^{-1}$, then $\nu_V$ has an $L^p$ density. Here
  $C$ is some universal constant ($1/100$ in our formulation, see
  \eqref{sNaNxz} below).

\medskip

In terms of the height function, the above result shows that $h(t,\cdot)-h(t,0)$ stabilizes exponentially fast in large time, and the steady state $\mathfrak{h}(\cdot)$ has  the same pathwise properties as a standard Brownian bridge on $\bT$. One can also derive some statistical properties, e.g., the Gaussian tail decay of $\PP(|\mathfrak{h}(x)-\mathfrak{h}(0)|>M)$ as $M\to\infty$, provided that $V$ is small as in (iii).

\bigskip
\textbf{Related works.}  In a loose sense one may view
\eqref{e.maineq1} as a continuous counterpart of the slow bond problem
\cite{JJ92,JJ94} (or fast bond depending on the sign of $V$). Put it
in this context, the interesting question would be to understand how
the average growth speed $\lim_{t\to+\infty} h(t,0)/t$ changes
with respect to the perturbation. It is natural to expect the
invariant measure to play an important role, although our result does
not give precise enough information for such an analysis. For the slow bond problem, the question was answered in \cite{BVS14}, and it was further proved in \cite{BSS17} that the steady state is diffusive on large scales.

An interesting example that we are unable to treat   is the singular case $V(\cdot)=\delta(\cdot)$. It is unclear to us in this case if the invariant measure is absolutely continuous with respect to the Gaussian measure, but we believe it is an interesting problem in its own right to try to describe the invariant measure for this special perturbation. It is worth pointing out that this type of singular perturbation arises naturally for the equation with inhomogeneous Neumann boundary conditions. For the half-line and open KPZ equation, the invariant measures are known explicitly and are absolutely continuous with respect to the Brownian motion, see \cite{DGR25} and the references therein. To see the connection between the two problems, one may observe that imposing the inhomogeneous Neumann boundary conditions is equivalent with forcing the equation with Dirac potentials at the boundaries. However, the key difference from our setting is the symmetry embedded into those problems: to reformulate the open KPZ equation as a periodic problem, one needs to impose Dirac potentials at the boundaries which convert the inhomogeneous Neumann boundary conditions into homogeneous ones, then perform an even extension of the equation. The approach developed in \cite{DGR25} to identify the explicit invariant measure for the open KPZ equation relies crucially on the evenness of the noise after this extension, and it does not appear to generalize to \eqref{e.maineq1} with $V=\delta$. That being said, in several places of this paper we treat the shift of the noise, $\xi\mapsto \xi+V$, by means of the Cameron-Martin theorem, which is inspired by \cite{DGR25}.

Another class of related works include \cite{CHT25,MMS22},  where the main focus has been on comparing the law of the nonlinear equation with that of the linear one, based on the time-shifted Girsanov method  developed in \cite{MS05,MS08}. The main issue there is to analyze the relevant \emph{regularity} so that the shift by the nonlinearity lives in the corresponding Cameron-Martin space. For the KPZ equation, this problem does not exist because the invariant measures for the nonlinear and linear equations coincide. One may perturb the noise covariance structure and ask the same question about absolute continuity, and this is not what we do in this paper. The raised question may be approached by the method developed in the aforementioned works, and our approach does not cover it. However, the method here  leads to quantitative estimates on the density which is our main motivation from the beginning. In this regard, we mention the   very interesting recent works \cite{BDW25,DHYZ25,GHR25} in the study of the QFT measure which gives very strong quantitative estimates.  

\bigskip
\textbf{Our approach.} The approach adopted in this paper can be
viewed as an infinite dimensional version of the standard PDE method
in proving the decay of the solution to a heat equation  on a
circle. Since it is a simple argument which may be applied to other
problems, we sketch it informally below. The key is a certain type of anti-symmetry associated with the nonlinearity of the equation.

First, consider the unperturbed case of $V=0$. On a physics level, one can write the generator associated with the stochastic Burgers equation as $\mathcal{L}=\mathcal{S}+\mathcal{A}$, where $\mathcal{S}$ corresponds to the linear part of the equation $\tfrac12\Delta u+\nabla \xi$ and $\mathcal{A}$ corresponds to the nonlinear part $\tfrac12\nabla u^2$. Take a functional $F(u)$ with $u=(u_x)_{x\in\bT}$,  we write on a formal level
\[
\begin{aligned}
&\mathcal{S}F(u)=\int_{\bT} \,  \delta_{u_x} F(u)\tfrac12\Delta u_x dx-\tfrac12\int_{\bT^2} \,\delta^2_{u_xu_y}F(u)\delta''(x-y) dxdy,\\
&\mathcal{A} F(u)=\int_{\bT} \,\delta_{u_x} F(u)\tfrac12\nabla u_x^2 dx ,
\end{aligned}
\]
where $\delta_{u_x}$ is the derivative with respect to $u_x$. An
integration by parts shows that $\mathcal{S}$ is symmetric and
$\mathcal{A}$ is anti-symmetric with respect to the   inner product
with an underlying white noise measure $\nu(du)=e^{-\tfrac12\int_{\bT} \, u_x^2 dx} du$:
\[
\la F, G\ra=\int F(u) G(u)e^{-\tfrac12\int_{\bT} \, u_x^2 dx} du,
\] 
which implies that $\mathcal{L}^*=\mathcal{S}^*+\mathcal{A}^*=\mathcal{S}-\mathcal{A}$.  Suppose $F_t(u)$ is the solution to the Fokker-Planck equation 
\[
\partial_t F_t=\mathcal{L}^* F_t=\mathcal{S} F_t-\mathcal{A} F_t,
\]
started at some density $F_0$. Then it is straightforward to use the
anti-symmetry of $\mathcal{A}$, with respect to $\nu$, to derive 
\begin{equation}\label{e.632}
\begin{aligned}
\tfrac{d}{dt}\la F_t, \log F_t\ra= \la F_t, \mathcal{S}\log F_t\ra, \quad\quad \tfrac{d}{dt} \la F_t,F_t\ra=\la F_t, \mathcal{S}F_t\ra.
\end{aligned}
\end{equation}
In other words, the nonlinear part of the diffusion disappears after an integration by parts on the level of the Fokker-Planck equation. Since $\mathcal{S}$ is the generator of an infinite dimensional OU process, we have the standard log-Sobolev and spectral gap inequalities associated with it, which give the dissipation on the r.h.s.:
\begin{equation}\label{e.633}
\begin{aligned}
\tfrac{d}{dt}\la F_t, \log F_t\ra \leq -c_1\la F_t, \log F_t\ra, \quad\quad \tfrac{d}{dt} \la F_t,F_t\ra \leq -c_2\la F_t-1,  F_t-1\ra,
\end{aligned}
\end{equation}
where $c_1,c_2>0$ are the Log-Sobolev and spectral gap constants. 
This formal argument shows that $F_t\to1$, thereby the law of $u_t$
converges to the white noise. So far this is rather standard and
well-known, on the physics level. A very interesting work \cite{GP20} made it rigorous on the mathematical level. The anti-symmetry of $\mathcal{A}$ may be viewed as the fundamental reason why the nonlinear stochastic Burgers equation admits the same invariant measure as the linear Edwards-Wilkinson equation.

The main contribution of our work is to rigorize and generalize the above argument to the perturbed case of $V\neq0$. With the perturbation, the generator takes the form $\mathcal{L}_V=\mathcal{S}+\mathcal{A}+\mathcal{A}_V$ with $\mathcal{A}_V$ corresponding to the additional term $\nabla V$ in \eqref{e.631}:
\[
\mathcal{A}_V F(u)=\int_{\bT} \, \delta_{u_x}F(u) \nabla V(x) dx .
\]
As a result, we get an extra term in \eqref{e.632} 
\[
\begin{aligned}
&\tfrac{d}{dt}\la F_t, \log F_t\ra= \la F_t, \mathcal{S}\log F_t\ra+\la F_t,\mathcal{A}_V \log F_t\ra, \\
& \tfrac{d}{dt} \la F_t,F_t\ra=\la F_t, \mathcal{S}F_t\ra+\la F_t, \mathcal{A}_V F_t\ra.
\end{aligned}
\]
Applying the same functional inequalities, we derive differential inequalities similar to \eqref{e.633} but with an extra forcing term associated with the perturbation $V$. An application of Gronwall's inequality provides the uniform in time bound on the relative entropy and $L^2$ norm, which leads to our main result.

The above argument is surprisingly simple, at least conceptually. The main technical difficulty lies in the singular nature of the equation, as everything sketched above is purely  formal. On the mathematical side, one may try to follow the approach developed in \cite{GP20} to write a proof on a ``continuous'' level. Alternatively, one may discretize or mollify the equation and proceed by approximation. With an appropriate discretization that preserves the aforementioned (anti-)symmetry, the sketched argument can be put on a firm mathematical ground. This is what we  do in this paper. 

\medskip

Throughout the paper, we assume $\bT$ to be the circle of length
$1$. The same argument applies to arbitrary length $\ell$, which leads
to the absolute continuity of the invariant measure with a finite
relative entropy. However, the obtained estimates deteriorates as
$\ell$ increases, e.g., to obtain $L^p-$integrability of the density,
one would need to assume $\int_{\bT} V^2(x)dx$ is smaller than some
$\ell-$dependent factor and the bound on the $L^p-$norm also depends
on $\ell$ which does not seem realistic. This inefficiency  is not
surprising though, because the log-Sobolev and spectral gap constants
in \eqref{e.633} scales as $\ell^{-2}$ as $\ell\to\infty$, and we used
these functional inequalities for the worst possible
scenarios. Although the anti-symmetric part $\mathcal{A}$ does not
show up in the energy estimate \eqref{e.632}, which apparently
simplifies our analysis, it helps to ``stir'' the equation and move
the mass to high order Fourier modes. As a result, the actual decay
given by the dissipative term $\la F_t, \mathcal{S}\log F_t\ra$, or
$\la F_t, \mathcal{S}F_t\ra$ would be much stronger than the one given
by the functional inequalities. On a philosophical level, this is
similar to those anomalous dissipation or relaxation enhancement
problems, where the incompressible drift induces the anti-symmetric
part of the generator for the underlying diffusion, and although this
anti-symmetric part does not show up in energy estimate, it helps to
improve the dissipation.  It would be very interesting to prove such
phenomenon for our problem as $\ell\to\infty$, in particular, we would
like to obtain $L^p-$norm  bounds on the density that is uniform in $\ell$. Recalling the discussion at the beginning of the paper,  the object of interest, the large scale roughness, can be measured by how $\EE |\mathfrak{h}(x)-\mathfrak{h}(0)|^2$ grows as $x\to\infty$, where $\mathfrak{h}$ is the steady state of the perturbed KPZ. Now, by the absolute continuity of the invariant measure, it is rewritten as 
\[
\EE |\mathfrak{h}(x)-\mathfrak{h}(0)|^2=\EE\big[|B(x)|^2\mathcal{Q}_\ell(B)\big], 
\] where $B$ is a Brownian bridge on the circle of length $\ell$ and $\mathcal{Q}_\ell$ is the density of law of $\mathfrak{h}$ with respect to the Brownian bridge. After applying Cauchy-Schwarz inequality, it is bounded from above by $x\sqrt{\EE\mathcal{Q}_\ell(B)^2}$ for $1\ll x\ll \ell$, where  we borrowed the diffusive behavior from the Brownian bridge and are left with estimating $\mathcal{Q}_\ell(B)$. In other words, the result of this paper provides a new perspective of proving the diffusive upper bound for the steady state, which hinges on estimating $\EE\mathcal{Q}_\ell(B)^2$ as $\ell\to\infty$.

\bigskip




\bigskip
\textbf{Outline of the paper.}  The rest of the paper is organized as follows.  In Section~\ref{prelim}, we introduce the setup and state the main results of the paper. In Section~\ref{ASFM}, we follow our approach developed in \cite{GK} to show the existence, uniqueness and exponential stability of the invariant measure for the perturbed Burgers equation. The proof of the main results, namely, the absolute continuity and the quantitative estimates on the relative entropy and $L^p-$density, will be done in Sections~\ref{Disc}-\ref{s.last}, which is based on a discretization and mollification originally used in \cite{FQ}. Some technical results are left in the appendix.

\section{Preliminaries and the statements of the main results}

\label{prelim}

{In this section, we introduce notations and state our main results.}

\subsection{Stochastic Burgers equation}
\label{sec1.1}

For the equation posed on the torus $\bT=\bbR/\bbZ$,  we assume that
$\Big(\xi(t,x)\Big)$ is the space-time white noise defined over a
probability space $(\Om,{\cal F},\PP)$, i.e.
\begin{equation}
  \label{WN}
  \EE [\xi(t,x)\xi(s,y)]=\delta(t-s) \delta
  (x-y) , \quad (t,x),(s,y)\in\bbR\times\bbT.
\end{equation}
Here $\EE$ is the expectation w.r.t. measure $\PP$.
Then, the process $W(t,x)=\int_0^t\xi(s,x)ds$ is the cylindrical
Wiener process on $L^2(\bT)$. With some abuse of notations, sometimes we shall write
$dW(t,x)=\xi(t,x)dt$.
Denote by $({\cal
  W}_t\big)$ the natural filtration of $\Big(W(t,x)\Big)$.

Throughout the paper we assume that $V:\bT\to\R$ satisfies
\begin{equation}
    \label{V00}
    V\in C^2(\bT)\quad\mbox{and}\quad  \int_{\bT}V(x)dx=0.
  \end{equation}
  By $C^m(\bT)$ (resp. $H^m(\bT)$) $m=0,1,2,\ldots$, we
denote the space of all $m$ times continuously differentiable
functions (resp. the Sobolev space of
functions with square integrable $m$-th derivatives). By $H^{-m}(\bT)$
we denote the dual space to $H^m(\bT)$.  In particular
$H^0(\bT)=L^2(\bT)$ and $C^0(\bT)=C(\bT)$ is the space of
continuous functions.

Consider the stochastic heat equation (SHE) with a perturbed noise 
\begin{equation}\label{e.she}
d Z_V=\Big(\frac12\Delta Z_V+  Z_V  V\Big)dt+ Z_VdW(t,x),  \quad\quad (t,x)\in(0,+\infty)\times\bT.
\end{equation}
If $Z(0,\cdot)\in L^1(\bT)$ and is non-negative,
the mild solution to \eqref{e.she} is defined globally and is
 {positive} a.s., see e.g. \cite{flores,mueller,walsh}. Define 
\[
h_V(t)= \log Z_V(t), \quad\quad u_V(t)=\nabla h_V(t),
\]
so, formally, $h_V, u_V$ solve the perturbed version of the KPZ equation
\[
  \partial_t h_V=\frac12\Delta h_V+\frac12 |\nabla h_V|^2 +\xi+V\]
and the stochastic Burgers equation (SBE)
\begin{equation}
  \label{011804-26}
  \begin{aligned}
d u_V(t,x)=\Big(\frac12\Delta u_V(t,x)+\frac12 \nabla u_V^2(t,x)+\nabla V(t,x)\Big)dt +d\nabla W(t,x).
\end{aligned}
\end{equation}
We shall omit the subscript $V$ in the case $V=0$.

\subsection{Some basic notation}

\subsubsection{Probability measures on a Polish metric space} In what follows we denote by 
$\|\cdot\|_\infty$   the supremum metric on the respective
function space. In addition, given a Polish metric space $(X,{\rm d})$, we let
  ${\cal M}_1(X)$ be the space
 of all Borel probability measures on $ X$, equipped with the
 topology of weak convergence of measures. Denote also by $B_b(X)$
 (resp. $C(X)$, $C_b(X)$) the space of all bounded, Borel
 measurable (resp. continuous, bounded continuous) functions on  $X$.

%


 \subsubsection{Some function spaces}

Denote by $\rC\subset C(\bT)$ the space  of all positive continuous
functions functions defined on $\bT$, 
equipped with the metric 
\begin{equation}
  \label{rC}
  {\rm d}_{\rC}(Z_1,Z_2):=\|\log Z_1-\log Z_2\|_\infty,\quad Z_1,Z_2\in \rC.
\end{equation}
Note that $(\rC, {\rm d}_{\rC})$ is a Polish metric space.  

Consider ${\rm H}$  the space consisting of all elements $u\in
H ^{-1 }(\bT)$ for which there exists a {continuous $h$}
such that 
\begin{equation}
  \label{012704-26}
  \begin{split}
  h(0)=0\quad \mbox{ and}\quad
 u(\varphi)=\mm{-}\la h,\nabla \varphi\ra,\quad \varphi\in
H^1(\bbT) .
\end{split}
\end{equation}
Throughout the paper  $\la\cdot,\cdot\ra=\la \cdot,
\cdot\ra_{L^2(\bT)}$ denotes the usual scalar product in
the $L^2(\bT)$.
One can easily see $h={\frak p}(u)$ is determined  uniquely  {by the relation 
\[
h(x)=\int_0^x u(y)dy, \quad\quad u\in {\rm H}.
\]}  
On $\rH$ we consider the metric
\begin{equation}
  \label{rH}
  {\rm d}_{\rH}(u_1,u_2):=\|{\frak p}(u_2)-{\frak
    p}(u_1)\|_\infty,\quad u_1,u_2\in \rH.
  \end{equation}
$(\rH,  {\rm d}_{\rH})$ is continuously embedded in
$H^{-1 }(\bT)$ and is Polish.

Let 
$D_+(\bT)$ be the space of  all strictly positive, continuous densities 
on $\bT$. 
We equip $D_+(\bT)$ with the topology induced by the metric
\begin{equation}
  \label{D}
  {\rm d}_{D}(\rho_1,\rho_2):=\|\log\rho_1-\log\rho_2\|_\infty,\quad
  \rho_1,\rho_2\in D_+(\bT).
\end{equation}
One can  check that  $\big(D_+(\bT),  {\rm d}_{D}\big)$ is a
Polish metric space.


Furthermore, we let
$R: D_+(\bT)\to \rH$, where
\begin{equation}
  \label{012804-26}
  R(\rho)(\varphi):=-\la\log \rho,\nabla\varphi\ra,\quad \varphi\in
  H^1(\bT),\, \rho\in D_+(\bT).
  \end{equation}
The mapping  is a homeomorphism between $(D_+(\bT),{\rm d}_{D})$ and
$(\rH,{\rm d}_{\rH})$. The inverse $R^{-1}: \rH\to  D_+(\bT)$ is   given
by
\begin{equation}
  \label{012804-26-1}
   R^{-1}(u)(x)=\frac{e^{ h(x)}}{\int_{\bT}e^{h(x')}dx'},\quad\mbox{where} \quad 
  h={\frak p}(u) ,\,  x\in
  \bT.
\end{equation}

Denote by $ D(\bT)$ the set of all continuous densities on $\bT$ equipped with the
metric
$$
{\rm d}_\infty(\rho_1,\rho_2):=\|\rho_2-\rho_1\|_\infty.
$$ 
Note that $(D_+(\bT),{\rm d}_{D})$ is continuously embedded into
$(D(\bT),{\rm d}_{\infty})$ and  $D_+(\bT)$ is a dense subset of
$D(\bT)$ in the supremum metric.

\subsubsection{Mean-zero white noise on   $\bT$}

\label{sec-gwn}

Let $\nu\in{\cal M}_1(\rH)$ be the spatial mean-zero white noise on
$\bT$,
i.e. a 
  Gaussian  $\rH$-valued process such that
  \begin{align*}
    \int_{\rH}  u(\varphi_1)   \nu(du)&=0\quad\mbox{and}\\
    \int_{\rH}  u(\varphi_1)  u(\varphi_2)\nu(du)&=\la
      \varphi_1,\varphi_2\ra_{L^2(\bT)}\\
      &-\Big(\int_{\bT}\varphi_1(x)dx\Big) \Big(\int_{\bT}\varphi_2(x)dx\Big),\quad  \varphi_1,\varphi_2\in
      C^\infty(\bT).
  \end{align*}
The measure  coincides with  the law of 
\begin{align}
  \label{inv-meas1}
  &
    \xi(x)
    = \sum_{n=1}^{+\infty} \big(\xi^{(n,c)}{\rm
    c}_n(x)+\xi^{(n,s)}{\rm s}_n(x)\big) . 
\end{align}
 Here  $\xi^{(n,\iota)}$, $n=1,2,\ldots$, $\iota\in\{c,s\}$ are
i.i.d. standard real valued Gaussians and 
\begin{align*}
&{\rm c}_n(x):=\sqrt{2}\cos(2\pi 
nx),\quad  {\rm s}_n(x):=\sqrt{2}\sin(2\pi 
nx)\quad x\in\bbT,\,n=1,2,\ldots ,
\\
&
{\rm c}_0(x)\equiv 1.
\end{align*}

\subsection{Main results}

Using   standard results on the stochastic heat
equation, see e.g. \cite{walsh}, 
one can show
that $\big(u_V(t;u)\big)$, the solution of \eqref{011804-26}
satisfying $u_V(0;u)=u$, with $u\in\rH$,  is a continuous trajectory Feller-Markov
process taking values in $H$. 
Denote by $\big({\cal U}^V_t\big)$ {the transition
  probability semigroup associated with the    process} and
by $\big({\cal P}^V_t\big)$  its
 dual  semigroup, defined on $B_b(\rH)$ and $ 
{\cal M}_1\big( {\rm H}\big)$ respectively. 
Given $\mu\in
{\cal M}_1\big( {\rm H}\big)$  we let $\PP_{\mu}:=\mu\otimes \PP$,
defined on $H\times\Om$,
 and
$\EE_{\mu}$ be the corresponding expectation.

{First, we have the following result regarding the existence, uniqueness and the exponential stability of the invariant measure.}  
 \begin{theorem}
   \label{thm012504-26}
  There exists a unique invariant, Borel probability measure
 $\nu_V \in {\cal M}_1\big( {\rm H}\big)$ for
 the solution of \eqref{011804-26}, i.e.
 \begin{equation}
   \label{inv-m}
   \int_{\rH}{\cal U}^V_tF(u) \nu_V(du)=\int_{\rH}F(u) \nu_V(du),\quad
   t\ge0,\,F\in B_b(\rm H).
 \end{equation}
  The measure $\nu_V$ has the following properties:

 \begin{itemize}
   \item[i)] for any $F\in C_b({\rm H})$ and $\mu\in {\cal M}_1\big( {\rm H}\big)$, we have
 \begin{equation}
   \label{AS}
   \lim_{t\to+\infty}\int_{\rH}{\cal U}^V_tF(u) \mu(du)=\int_{\rH}F(u) \nu_V(du),
 \end{equation}
 \item[ii)] for any $p\in[0,+\infty)$   we
have
\begin{equation}
   \label{p-moment}
   \int_{\rH} \|u \|_{H^{-1}(\bT)}^p  \nu_V(du)<+\infty,
 \end{equation}
 \item[iii)] for any $\mu\in
   {\cal M}_1\big( {\rm H}\big)$ for which
 \begin{equation}
   \label{p-moment1}
  u_*:= {\limsup_{t\to0+}}\,\EE_{\mu}\|u_V(t)\|_{{H^{-1}(\bT)}}<+\infty
 \end{equation}
  there exists constant $C >0$, depending only  $u_*$ and
  $\|V\|_{L^2(\bT)}$, and $\lam>0$, depending only on    
  $\|V\|_{L^2(\bT)}$,   such that  
 \begin{equation}
   \label{F-M-stab}
   \begin{split}
     \Big| \EE_{\mu}\Big[u_V(t)(\varphi)\Big]&-\int_{\rH}u(\varphi)
     \nu_V(du)\Big|
     \le C 
  e^{-\lam t}   \|\varphi\|_{H^1(\bT)},\quad
  t\ge0,\,\varphi\in H^1(\bT).
  \end{split}
  \end{equation}
  \end{itemize}
\end{theorem}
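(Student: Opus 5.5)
The plan is to work at the level of the stochastic heat equation \eqref{e.she} and use the projective (Hilbert) contraction of positive kernels, following the approach of \cite{GK}. We pass through the homeomorphism $R$ of \eqref{012804-26} and study the normalized density process $\rho_V(t)=Z_V(t)/\int_{\bT}Z_V(t,x)dx\in D_+(\bT)$. By linearity of \eqref{e.she}, $\rho_V(t)$ is the projective image of $\rho_V(0)$ under the random positive propagator $\mathcal Z_V(s,t;x,y)$, and $u_V(t)=R(\rho_V(t))$. Convergence in $({\rm H},{\rm d}_{\rm H})$ is then equivalent to convergence of $\log\rho$ in sup norm, that is, in the metric ${\rm d}_D$. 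The multiplicative potential $V$ is bounded, since $V\in C^2$, so the propagator of \eqref{e.she} satisfies two-sided comparisons with the $V=0$ propagator. On $[s,s+1]$ we have $e^{-\|V\|_\infty}\mathcal Z\le \mathcal Z_V\le e^{\|V\|_\infty}\mathcal Z$ pathwise, by the Feynman--Kac representation. Alternatively, Cameron--Martin removes the shift $V$ at the cost of a density with finite moments depending only on $\|V\|_{L^2}$.

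\textbf{Step 1 (one-step contraction).} Consider the unit-time propagator $\mathcal Z_V(k,k+1;x,y)$. Its oscillation ratio
\[
\Theta_k=\sup_{x,y,x',y'}\frac{\mathcal Z_V(k,k+1;x,y)\mathcal Z_V(k,k+1;x',y')}{\mathcal Z_V(k,k+1;x,y')\mathcal Z_V(k,k+1;x',y)}
\]
is a.s. finite. Moreover, the $\Theta_k$ are i.i.d. in $k$ and have moments of all orders; this follows from positivity and moment bounds for the SHE, namely positive and negative moments of $\sup/\inf$ of the propagator on the torus, transferred from $V=0$. Birkhoff's contraction theorem then shows that each step contracts the Hilbert projective metric $d_P(\rho_1,\rho_2)=\log\sup(\rho_1/\rho_2)-\log\inf(\rho_1/\rho_2)$ by the factor $\tanh(\tfrac14\log\Theta_k)<1$. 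Composing $n$ independent steps gives
\[
\EE\, d_P(\rho^1_n,\rho^2_n)\wedge 1\le C e^{-\lambda n},
\]
after averaging via $\EE\prod_k\tanh(\cdot)=(\EE\tanh)^n$. After one step, $d_P$ is finite even for arbitrary initial data, because the propagator is bounded above and below.

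\textbf{Step 2 (existence, uniqueness, \eqref{AS}).} We run the pullback construction: start at time $-n$ and let $n\to\infty$. The exponential $d_P$-contraction gives an a.s. Cauchy sequence $\rho_V(0;-n)$ in $D_+$, and its limit has a law $\nu_V$ (pushed forward by $R$) that is independent of the initial data. Invariance follows from stationarity of the noise increments. Uniqueness and \eqref{AS} for $F\in C_b$ follow because any two initial laws couple to within $d_P\to0$, and $\|\log\rho_1-\log\rho_2\|_\infty\le d_P$ once both are normalized densities.

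\textbf{Step 3 (moments and \eqref{F-M-stab}).} For ii), bound $\|u\|_{H^{-1}}\lesssim\|h\|_\infty\le \operatorname{osc}\log\rho$. Under $\nu_V$, $\log\rho$ is the image of the one-step propagator, whose log-oscillation has all moments. For iii), write
\[
u(\varphi)=-\la\log\rho,\nabla\varphi\ra,
\]
and couple the solution started from $\mu$ with the stationary one using the same noise. Then
\[
|\EE_\mu u_V(t)(\varphi)-\nu_V(u(\varphi))|\le\|\varphi\|_{H^1}\,\EE\,\|\log\rho^\mu_t-\log\rho^{st}_t\|_\infty .
\]
Estimate the expectation by $\EE[d_P\,]$, which combines Cauchy--Schwarz, the exponentially small probability of non-contraction, and moments of $d_P$ after the first step. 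The dependence of the constants only on $u_*$ and on $\|V\|_{L^2}$ is obtained via the Cameron--Martin route rather than $\|V\|_\infty$: the Girsanov density for the shift $\xi\mapsto\xi+V$ on a unit time interval has moments controlled by $e^{c\|V\|_{L^2}^2}$, and Hölder transfers the $V=0$ bounds.

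\textbf{Main obstacle.} The hardest step is Step 1's quantitative input: the moment bounds on $\Theta_k$, i.e. negative moments of $\inf_{x,y}\mathcal Z(0,1;x,y)$ together with positive moments of $\sup$, uniform over the torus. Handling initial data in ${\rm H}$ satisfying only \eqref{p-moment1}, rather than bounded $\log\rho$, also needs care. I would first take these bounds from \cite{GK} and the SHE literature for $V=0$, and then transfer them to $V\ne0$ via Girsanov.
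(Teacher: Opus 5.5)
Your proposal is correct, and its skeleton matches the paper's. You pass to the projective process $\rho^V$ through $R$ and control positive and negative moments of $\sup/\inf$ of the unit-time propagator by a Cameron--Martin/H\"older transfer from $V=0$, which is exactly Lemma~\ref{l.ulbd}. You then derive exponential synchronization and read off i)--iii).

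The difference lies in the synchronization step and in the metric used. The paper does not reprove synchronization. It imports Theorem~2.3, Proposition~4.7 and Lemma~4.10 of \cite{GK}, which give a limit law $\pi_\infty$ on $D(\bT)$, the sup-norm estimate \eqref{012304-21z}, and the moment bounds \eqref{013004-26} on $\|\rho\|_\infty$ and $\|\rho^{-1}\|_\infty$. Since these bounds control $\rho$ rather than $\log\rho$, the paper needs two extra steps:
\begin{itemize}
\item[1.] For \eqref{AS}, a truncation to $D_\eps$ together with a continuous extension of $F\circ R$.
\item[2.] For \eqref{F-M-stab}, the mean-value bound \eqref{033004-26} combined with Cauchy--Schwarz against negative moments of $\rho$.
\end{itemize}

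Your Birkhoff contraction in the Hilbert metric $d_P$ is self-contained and works directly in the right topology. For normalized densities, ${\rm d}_{\rH}(R\rho_1,R\rho_2)\le d_P(\rho_1,\rho_2)$ and $\|\log\rho_1-\log\rho_2\|_\infty\le d_P(\rho_1,\rho_2)$. Hence your pullback limit lies in $D_+(\bT)$ from the outset, and both extra steps disappear. The paper's route buys brevity instead, since only Lemma~\ref{l.ulbd} has to be rechecked for $V\neq 0$.

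Two small simplifications to your argument:
\begin{itemize}
\item[1.] The $\wedge 1$ truncation and the Cauchy--Schwarz step are unnecessary. The bound $d_P(\rho^1_1,\rho^2_1)\le\log\Theta_0$ holds for arbitrary initial data, and independence then gives $\EE\, d_P(\rho^1_n,\rho^2_n)\le\EE[\log\Theta_0]\,\big(\EE\tanh(\tfrac14\log\Theta_0)\big)^{n-1}$ directly.
\item[2.] To make $\lambda$ depend only on $\|V\|_{L^2(\bT)}$, use Jensen: $\EE\tanh(\tfrac14\log\Theta_0)=1-2\EE(1+\Theta_0^{1/2})^{-1}\le 1-2(1+\EE\,\Theta_0^{1/2})^{-1}$.
\end{itemize}
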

This  result  follows from a rather straightforward
 extension of \cite[Lemma 2.2 and Theorem 2.3]{GK}.  We present the
 proof   in Section \ref{ASFM}.

 According to our knowledge, more precise  description of  the invariant measure $\nu_V$ is not known, except
 in the case $V=0$.  
It has been shown in \cite{FQ} that then
  $\nu_0=\nu$, where $\nu$ is the spatial mean-zero white
  noise on the torus, described in Section \ref{sec-gwn}.


Suppose that $F(\cdot)\in   D(\nu)$ - the set of
 densities on  ${\cal M}_1( {\rm H})$ w.r.t. $\nu$.
 Denote by $u_V(t;F)$ the solution of \eqref{011804-26}
 and the initial
 data $u_V(0;F)$ is distributed according to $\mu(du)=F(u)\nu(d u) $. 
Denote by $\mu_{t,V}(F)$ the law of  $u_V(t;F)$. The following result
says that the law    is absolutely continuous with respect to $\nu$.
We can write therefore $\mu_{t,V}(F)={\cal
  P}_t^VF d\nu$, where ${\cal
  P}_t^VF\in D(\nu)$.
  \begin{proposition}
  \label{sec1aC}
 Suppose that $t\ge0$. Then, the probability measure $\mu_{t,V}(F)$ is
 absolutely continuous w.r.t. the measure $\nu$.  
 We denote its respective
 Radon-Nikodym derivative   by ${\cal
  P}_t^VF$. As a result, the family $\big({\cal P}_t^V\big)_{t\ge0}$ forms a strongly continuous
semigroup of linear operators on $L^1(\nu)$ which  satisfies
\begin{equation}
  \label{f-p}
  \begin{split}
    &{\cal P}_t^VG\ge0,\quad\mbox{if }G\ge0,\\
&  \int_{\rm H}{\cal P}_t^VGd\nu=\int_{\rm H} Gd\nu,\quad G\in
L^1(\nu).
\end{split}
\end{equation}
\end{proposition}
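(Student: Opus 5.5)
We reduce the absolute continuity of $\mu_{t,V}(F)$ with respect to $\nu$ to the unperturbed case $V=0$, and handle the shift of the noise by the Cameron--Martin (Girsanov) theorem. The semigroup statements then follow from the Markov property.

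\emph{Step 1: the case $V=0$.} By \cite{FQ}, $\nu$ is invariant for the unperturbed Burgers dynamics, i.e.\ $\mu_{t,0}(1)=\nu$. For a general density $F$ and a Borel set $A\subset\rH$ with $\nu(A)=0$, we have
\[
\mu_{t,0}(F)(A)=\int_{\rH}F(u)\,\PP\big(u(t;u)\in A\big)\,\nu(du).
\]
Invariance gives $\int_{\rH}\PP(u(t;u)\in A)\,\nu(du)=\nu(A)=0$. Hence $\PP(u(t;u)\in A)=0$ for $\nu$-a.e.\ $u$, and therefore $\mu_{t,0}(F)(A)=0$. So $\mu_{t,0}(F)\ll\nu$.

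\emph{Step 2: the perturbation.} On $[0,t]$, replacing the noise $\xi$ by $\xi+V$ amounts to shifting the cylindrical Wiener process $W$ by the deterministic drift $s\mapsto sV$. This drift belongs to the Cameron--Martin space $H^1([0,t];L^2(\bT))$, since $V\in C^2(\bT)$. By Girsanov's theorem, the law of $W(\cdot)+\cdot\,V$ on $[0,t]$ is equivalent to the law of $W$, with density
\[
\exp\Big\{\int_0^t\la V,dW(s)\ra-\tfrac t2\|V\|^2_{L^2(\bT)}\Big\}.
\]
The solution $Z_V$ of \eqref{e.she} is the image of the initial datum and the driving noise under the same measurable solution map as for $V=0$, with the noise shifted. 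Indeed, the mild formulation of $dZ=\tfrac12\Delta Z\,dt+Z\,d(W+sV)$ coincides with that of \eqref{e.she}, because the Itô product with a finite-variation drift adds exactly $ZV\,dt$. Consequently, for each fixed initial condition $u$, the law of $u_V(t;u)$ is absolutely continuous with respect to the law of $u(t;u)$. Integrating against $F\,d\nu$ gives $\mu_{t,V}(F)\ll\mu_{t,0}(F)\ll\nu$.

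The main obstacle is to justify this identification rigorously: one must check that the mild solution map for SHE is a measurable functional of the noise that commutes with the Girsanov change of measure, and that the passage $Z\mapsto u=\nabla\log Z$ in $\rH$ is measurable. I would handle this at the level of the mild solution to \eqref{e.she}. I would prove that under the tilted measure the process $Z_V$ satisfies the $V=0$ equation driven by a Wiener process, and then invoke pathwise uniqueness.

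\emph{Step 3: semigroup properties.} Define $\mathcal P_t^VF:=d\mu_{t,V}(F)/d\nu$, and extend it by linearity to $G=G_+-G_-\in L^1(\nu)$, scaling by the masses. Positivity and conservation of mass in \eqref{f-p} are immediate, because $\mu_{t,V}(F)$ is a probability measure. The semigroup property $\mathcal P^V_{t+s}=\mathcal P^V_t\mathcal P^V_s$ follows from the Markov property of $(u_V(t))$.

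For strong continuity, $\mathcal P^V_t$ is a contraction on $L^1(\nu)$, so it suffices to check continuity on a dense class, and then only at $t=0$ thanks to the semigroup property. I would use Scheffé's lemma. By the Feller property and continuity of trajectories, $\mu_{t,V}(F)\to\mu_{0,V}(F)=F\,d\nu$ weakly. Upgrading this to $L^1(\nu)$ convergence of the densities needs more than weak convergence. One route is to use the explicit Girsanov density from Step~2, which tends to $1$ in $L^1(\PP)$ as $t\to0$, which reduces the question to the $V=0$ case. For the $V=0$ case I would use the $L^2(\nu)$ contractivity of $\mathcal P^0_t$ (invariance of $\nu$) together with weak convergence to get $L^2$-weak and norm convergence for $F\in L^2(\nu)$. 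This is the step where I would expect extra care to be needed.
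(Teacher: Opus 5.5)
Your proposal is correct, and the step you flagged goes through as you sketch it. It follows the paper for absolute continuity but takes a different route to strong continuity.

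\textbf{Absolute continuity.} This is essentially the paper's argument. The Cameron--Martin formula reduces to $V=0$, and invariance of $\nu$ under the unperturbed flow kills $\nu$-null sets. The paper adds a H\"older step to get the quantitative bound $\mu_{t,V}(F)(A)\le \mu_{t,0}(F)(A)^{1/q}\exp\{\tfrac{(p-1)t}{2}\|V\|^2_{L^2(\bT)}\}$. You argue pointwise in the initial datum instead, which suffices.

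\textbf{Strong continuity: the paper's route.} The paper invokes the time reversal of the stationary unperturbed equation (\cite{BQS11}) and the identity \eqref{021904-26} from \cite{GP}. This gives the backward representation \eqref{021804-26}, $\mathcal P_t^VF(u)=\EE[F(\tilde u(t;u))\exp\{\widetilde{\mathcal X}_t(V,u)\}]$, together with exponential moments of $\widetilde{\mathcal X}_t$ (Lemma~\ref{thm012104-26}). From this it deduces continuity in $t$, via path continuity of $\tilde u$ and uniform integrability.

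\textbf{Strong continuity: your route.} You bound $\|\mathcal P_t^VF-\mathcal P_t^0F\|_{L^1(\nu)}\le\|F\|_{L^1(\nu)}\EE|\mathcal E_t-1|$. This uses that the Girsanov density $\mathcal E_t=\exp\{\la W(t),V\ra-\tfrac t2\|V\|^2_{L^2(\bT)}\}$ does not depend on the initial datum and tends to $1$ in $L^1(\PP)$. For $V=0$ you use that $\mathcal P^0_t$ is the $L^2(\nu)$-adjoint of $\mathcal U^0_t$, hence an $L^2(\nu)$-contraction by invariance. Weak convergence tested on $C_b(\rH)$, which is dense in $L^2(\nu)$, together with $\limsup_{t\to0}\|\mathcal P^0_tF\|_{L^2(\nu)}\le\|F\|_{L^2(\nu)}$, then gives norm convergence.

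\textbf{What each route buys.} Yours is more elementary. It needs only invariance of $\nu$, continuity of trajectories at $t=0$ and Cameron--Martin, and it uses the adjoint of $\mathcal U^0_t$ only abstractly. The paper identifies that adjoint concretely as the semigroup of the reversed dynamics with nonlinearity $-\tfrac12\nabla\tilde u^2$. In exchange it obtains an explicit pointwise formula for the density $\mathcal P_t^VF$, from which it reads off Corollary~\ref{sec1aC1}. An $L^p$ bound of that type can also be recovered from your forward formula by duality and H\"older's inequality.
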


Any operator on $L^1(\nu)$ that satisfies   \eqref{f-p}  shall be
called a {\em Frobenius-Perron} (transfer) operator. It is obviously a
contraction on $L^1(\nu)$. The proof of the above result is  presented in Section \ref{secL1sa}.

The main result of the present paper is the following.
\begin{theorem}
  \label{cor022404-26}
  Under the assumption \eqref{V00}, {$\nu_V$ is absolutely continuous with respect to $\nu$, i.e.,}  there exists a unique $\bar
  F_V\in D(\nu)$ such that
   \begin{equation}
    \label{012704-26}
    {\cal P}_{t }^V\bar F_V =\bar F_V,\quad t\ge0.
  \end{equation}
  The following are true: 
  
  \begin{itemize}
  \item[(i)]
 The relative entropy of $\nu_V$ w.r.t. the white noise measure $\nu$ is finite:
  \begin{equation}
    \label{rel-ent}
{\mathbf{H}}_{V}(\nu_V\big|\nu )=\int_{\rH}\bar F_V\log \bar F_V d\nu<+\infty.
\end{equation}

\item[(ii)] Starting from an arbitrary density, the convergence to the invariant measure is in the total variation sense: 
   \begin{equation}
    \label{n072404-26}
    \lim_{t\to+\infty} \|{\cal P}_{t}^VF-\bar
    F_V\|_{L^1(\nu)}=0,\quad \mbox{for all}\,F\in D(\nu).
  \end{equation}
  
  \item[(iii)] $\bar F_V$ belongs to $L^p(\nu)$ for small
    perturbations: there exists a universal constant $C>0$ such that
  \[  \mbox{if $p\in(1,+\infty)$ and $ \|V\|_{L^2(\bT)}< Cp^{-1}$, 
  then $\bar F_V\in L^p(\nu)$.}
  \]
  \end{itemize}
  \end{theorem}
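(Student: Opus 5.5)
We follow the formal scheme from the introduction and make it rigorous by discretization. We replace \eqref{011804-26} by the Funaki--Quastel type finite-dimensional approximation. Concretely, we take $N$ lattice points on $\bT$ and a discrete Burgers nonlinearity written in the skew form $\tfrac13\big(u_{i+1}^2-u_{i-1}^2+u_i(u_{i+1}-u_{i-1})\big)$, together with mollified noise. This approximation has two properties:
\begin{itemize}
\item[(a)] its generator splits as $\mathcal S_N+\mathcal A_N+\mathcal A_{V,N}$;
\item[(b)] $\mathcal S_N$ is symmetric and $\mathcal A_N$ is antisymmetric with respect to the discrete mean-zero Gaussian measure $\nu_N$, which converges to $\nu$.
\end{itemize}
Let $F^N_t$ denote the density of the law of the approximation at time $t$ with respect to $\nu_N$. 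Starting from a smooth, bounded initial density, we compute
\[
\tfrac{d}{dt}\int F^N_t\log F^N_t\,d\nu_N=-\tfrac12\mathcal E_N\big(F^N_t,\log F^N_t\big)+\int F^N_t\,\mathcal A_{V,N}\log F^N_t\,d\nu_N .
\]
The nonlinear term vanishes by antisymmetry. For the $V$-term, $\mathcal A_{V,N}=\sum_i (\nabla_N V)_i\partial_{u_i}$, and integrating by parts against the Gaussian produces a Dirichlet-form piece. By Cauchy--Schwarz it is bounded by
\[
\|V\|_{L^2}\Big(\int |\nabla_{\mathcal S}\sqrt{F}|^2\,d\nu_N\Big)^{1/2}\Big(\int F\,d\nu_N\Big)^{1/2}\le \eps\,\mathcal E_N(\sqrt F)+C_\eps\|V\|^2_{L^2}.
\]
The gradient $\nabla V$ becomes $V$ after summation by parts, because $\mathcal S$ carries the operator $-\Delta$ and the noise is $\nabla\xi$. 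The log-Sobolev inequality for the OU process $\mathcal S_N$, whose constant is uniform in $N$ since the spectral gap of $-\Delta$ on mean-zero functions is $4\pi^2$, then gives
\[
\tfrac{d}{dt}H_N(t)\le -c_1H_N(t)+C\|V\|^2_{L^2}.
\]
Gronwall yields $\sup_t H_N(t)\le H_N(0)+C\|V\|_{L^2}^2/c_1$, uniformly in $N$.

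For (iii) we run the same computation on $\int (F^N_t)^p\,d\nu_N$. The antisymmetric term again drops out, because $\int F^{p-1}\mathcal A_NF=\tfrac1p\int\mathcal A_N(F^p)=0$. The $V$-term is bounded by $p\,\|V\|_{L^2}\big(\mathcal E(F^{p/2})\big)^{1/2}\|F^{p/2}\|_{L^2}$ up to constants. The dissipation is $\tfrac{4(p-1)}{p}\mathcal E(F^{p/2})$, and we combine it with the spectral gap inequality applied to $F^{p/2}$. Since $\int F^{p/2}$ is controlled by the $L^1$ mass and interpolation, the condition $p\|V\|_{L^2}$ small should let the dissipation absorb the forcing and keep $\|F^N_t\|_{L^p}$ bounded uniformly in $t$ and $N$. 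We expect the exact bookkeeping to yield a constant like $1/100$.

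Passage to the limit proceeds as follows. By Funaki--Quastel-type convergence, the discrete solutions converge in law to $u_V(t;F)$ for each fixed $t$, and $\nu_N\to\nu$. Lower semicontinuity of relative entropy and weak lower semicontinuity of the $L^p$ norm of the density then transfer the bounds to $\mathcal P^V_tF$, with constants independent of $t$; here Proposition~\ref{sec1aC} supplies the density. Take $F\equiv1$. By Theorem~\ref{thm012504-26}(i), the law of $u_V(t)$ converges weakly to $\nu_V$. The densities $\mathcal P^V_t1$ have uniformly bounded entropy, so they are uniformly integrable and weakly compact in $L^1(\nu)$. Hence $\nu_V=\bar F_V\,d\nu$, with $\int\bar F_V\log\bar F_V\,d\nu<\infty$, and under the smallness assumption $\bar F_V\in L^p$. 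The fixed-point property \eqref{012704-26} follows from invariance, and uniqueness of $\bar F_V$ from uniqueness of $\nu_V$.

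For (ii), let $G=\mathcal P^V_tF$. Total-variation convergence follows by first taking $F$ bounded with bounded entropy and using the weak convergence \eqref{AS}. We upgrade it to $L^1$ convergence with a Scheffé-type argument: $\mathcal P^V_t$ is an $L^1$ contraction, and $\|\mathcal P^V_{t}F-\bar F_V\|_{L^1}$ is nonincreasing. Uniform integrability gives weak $L^1$ convergence along subsequences, and we plan to use the strict contraction coming from the mixing of the nondegenerate noise (the strong Feller property on a finite time step) to get norm convergence. General $F\in D(\nu)$ is then handled by density and contraction.

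The main obstacle is the discretization. It must simultaneously preserve the exact antisymmetry of $\mathcal A_N$ with respect to $\nu_N$, handle the $V$-drift with $N$-uniform constants, and converge to the Cole--Hopf solution of \eqref{e.she}. We would use the Funaki--Quastel scheme, which was designed precisely so that $\nu_N$ is invariant when $V=0$. We would incorporate $V$ via Cameron--Martin/Girsanov on the noise to obtain convergence in the perturbed case.
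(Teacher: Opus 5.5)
The gap is in (ii). From \eqref{AS} with $\mu=F\nu$ and the uniform integrability of ${\cal P}^V_tF$, you only obtain \emph{weak} convergence of ${\cal P}^V_tF$ to $\bar F_V$ in $L^1(\nu)$. The monotonicity of $t\mapsto\|{\cal P}^V_tF-\bar F_V\|_{L^1(\nu)}$ does not upgrade this. Scheff\'e's lemma needs convergence a.e.\ or in measure, and for Frobenius--Perron operators weak-but-not-norm convergence is typical: the transfer operator of an invertible mixing measure-preserving map is an $L^1$-isometry whose iterates converge only weakly.

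Your fallback, a strong Feller property for the perturbed singular Burgers semigroup, is proved neither in your proposal nor in the paper. It is a deep external input of Hairer--Mattingly type, it would have to be shown to survive the Cameron--Martin tilt by $V$, and it would still need an argument turning it into norm convergence.

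The idea you are missing is that your entropy estimate is \emph{uniform in the initial density} for large times. For all $G$ bounded above and below, $\limsup_{t\to\infty}\mathbf{H}({\cal P}^V_tG)$ is bounded by a constant independent of $G$. By density and $L^1$-contraction, every orbit ${\cal P}^V_tF$ with $F\in D(\nu)$ therefore approaches one fixed sublevel set of the entropy, which is weakly compact by Dunford--Pettis; this is \eqref{062404-26}. Hence ${\cal P}^V_1$ is a constrictive Frobenius--Perron operator, and Komorn\'ik's theorem gives asymptotic periodicity in the $L^1$-\emph{norm}. The uniqueness and weak asymptotic stability of Theorem \ref{thm012504-26} rule out more than one cycle. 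This yields \eqref{n072404-26} along integer times, and then for all $t$ by contraction and invariance of $\bar F_V$. Your own entropy bound already supplies the constrictiveness, so (ii) is repaired by invoking this theorem instead of strong Feller.

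The other parts are sound, and in places differ usefully from the paper.
\begin{itemize}
\item The paper bounds the $V$-drift in the entropy identity with the entropy inequality. You bound it by Cauchy--Schwarz in the Dirichlet metric, where the constant that appears is $\mathfrak v_{N,V,0}$ of \eqref{hVN0}. This gives an entropy bound in terms of $\|V\|_{L^2(\bT)}$ rather than $\|\nabla V\|_{L^2(\bT)}$.
\item For (iii), the H\"older interpolation $\big(\int F^{p/2}d\nu\big)^2\le\big(\int F^p d\nu\big)^{(p-2)/(p-1)}$, valid because $\int F\,d\nu=1$, makes the forcing sublinear and replaces the paper's induction over $p=2^n$. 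However, you must split $F^{p/2}$ explicitly into its mean and its fluctuation. The spectral gap controls only the fluctuation, and $\|F^{p/2}\|_{L^2}$ itself is not absorbed by the dissipation.
\item Identifying $\nu_V=\bar F_V\,\nu$ directly from \eqref{AS} with $\mu=\nu$, plus uniform integrability of ${\cal P}^V_t1$, is more direct than the paper's route through Komorn\'ik's theorem for existence.
\item The limit is a double one: $\nu^{(N)}_S\to\nu_\eps$ as $N\to\infty$, then $\eps\to0$. The Girsanov weight in the second step requires the joint convergence of solution and driving noise given by Proposition \ref{prop012404-26}.
\end{itemize}
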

The proof of the result is contained in Section \ref{secL1sb}.

\section{Existence and stability of invariant measure: proof of Theorem \ref{thm012504-26}}

\label{ASFM}

{The goal of this section is to prove Theorem~\ref{thm012504-26}, which states that, for the perturbed stochastic Burgers equation, there exits a unique invariant measure and the equation satisfies  an exponential stability of the form \eqref{F-M-stab}. As a matter of fact, we will prove something much stronger, see \eqref{012304-21z} below, which is the so-called \emph{synchronization} or \emph{one-force-one-solution} principle in the study of the random dynamical systems. It allows us to compare two solutions to the equation driven by the same noise and with different initial conditions, and shows that the difference between the two solutions is exponentially small in time. For random Burgers equation on the torus, this is not surprising and goes back to the early work of Sinai \cite{sinai}. Our proof is a small adaptation of the argument presented in \cite{GK} for the case of $V=0$, which itself was inspired by Sinai's argument and   depends strongly on the compactness of the domain in consideration. For the convenience of readers we sketch it below.}

\subsection{Estimates of the fundamental solution of \eqref{e.she}}
For any $t>s$ and $x,y\in\bT$, define $\mathcal{Z}_{t,s}^V(x,y;\om)$ as the solution to 
\begin{equation}\label{e.defZts}
\begin{aligned}
&d \mathcal{Z}_{t,s}^V(x,y)=\big[\tfrac12\Delta_x
\mathcal{Z}_{t,s}^V(x,y)+\mathcal{Z}_{t,s}^V(x,y)V(x)\big]dt+\mathcal{Z}_{t,s}^V
(x,y)dW(t,x), \quad\quad t>s,\\
&\mathcal{Z}_{s,s}^V(x,y)=\delta(x-y).
\end{aligned}
\end{equation}
In other words, $\mathcal{Z}_{t,s}^V(x,y)$ is the Green's function of the
SHE, or the propagator from $(s,y)$ to $(t,x)$.  We shall   write  $\mathcal{Z}_{t,s}(x,y)=\mathcal{Z}_{t,s}^0(x,y)$.

Crucial for our argument is the following.
\begin{lemma}\label{l.ulbd}
For any $p\in[1,+\infty)$, there exists $C>0$ only depending on $p,V$ such that 
\begin{equation}\label{e.ulbd}
\EE\Big[\big(\inf_{x,y\in\bT}
\cZ_{t,t-1}^V(x,y)\big)^{-p}\Big]+\EE\Big[\big(\sup_{x,y\in\bT^d}\cZ_{t,t-1}^V(x,y)\big)^p\Big]\leq
C.
\end{equation}
\end{lemma}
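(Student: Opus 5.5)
The plan is to reduce \eqref{e.ulbd} to the corresponding bound for the unperturbed propagator $\cZ_{t,s}=\cZ^0_{t,s}$, which is known (e.g.\ from \cite{GK}, building on moment and negative-moment estimates for the SHE on the torus). By stationarity of the noise in time, it suffices to take $t=1$, i.e.\ to bound moments of $\sup_{x,y}\cZ^V_{1,0}(x,y)$ and $\inf_{x,y}\cZ^V_{1,0}(x,y)$.

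\textbf{Route 1 (Feynman--Kac comparison).} Formally,
\[
\cZ^V_{1,0}(x,y)=\cZ_{1,0}(x,y)\,\mathbb{E}^{B}_{\rm bridge}\Big[e^{\int_0^1 V(B_s)ds}\Big],
\]
where the expectation is over a Brownian bridge from $(0,y)$ to $(1,x)$ weighted by the polymer measure associated with $\xi$. Since $V$ is bounded, this gives the pathwise comparison
\[
e^{-\|V\|_\infty}\cZ_{1,0}(x,y)\le \cZ^V_{1,0}(x,y)\le e^{\|V\|_\infty}\cZ_{1,0}(x,y).
\]
To make this rigorous I would mollify the noise $\xi$ in space, where the Feynman--Kac representation with the Wick-renormalized potential $\xi_\eps+V$ holds exactly and the comparison is immediate. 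The mollified propagators converge to the true ones (uniformly in $x,y$, in probability) as $\eps\to0$, and the pointwise inequality passes to the limit. Alternatively, one can avoid Feynman--Kac altogether: the difference $\cZ^V-e^{\pm\|V\|_\infty t}\cZ$ satisfies a linear SHE with a forcing of fixed sign, so a comparison principle for the SHE yields the same sandwich.

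Once the sandwich is established, \eqref{e.ulbd} follows directly from the unperturbed estimates
\[
\EE\big[(\inf\cZ_{1,0})^{-p}\big]+\EE\big[(\sup\cZ_{1,0})^p\big]\le C_p,
\]
with the constant multiplied by $e^{p\|V\|_\infty}$.

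\textbf{Route 2 (Cameron--Martin).} If one prefers to follow the paper's emphasis on this tool, note that shifting $\xi\mapsto\xi+V$ is a Cameron--Martin shift, since $V$ is a time-independent $L^2$ function and so lies in the Cameron--Martin space on $[0,1]\times\bT$. Hence the law of $\cZ^V_{1,0}$ equals the law of $\cZ_{1,0}$ under the tilted measure with density
\[
\mathcal{E}=\exp\Big(\int_0^1\la V,dW\ra-\tfrac12\|V\|^2_{L^2}\Big).
\]
Hölder's inequality then gives
\[
\EE\big[(\sup\cZ^V)^p\big]\le \big(\EE\,\mathcal{E}^2\big)^{1/2}\,\EE\big[(\sup\cZ)^{2p}\big]^{1/2},
\]
and $\EE\,\mathcal{E}^2=e^{\|V\|^2_{L^2}}$. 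The infimum is handled identically. This route is cleaner and I would present it as the main argument, using the comparison of Route 1 only as a backup.

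\textbf{Main obstacle.} The real content is the unperturbed estimates, particularly the negative moments of the infimum over both $x$ and $y$. I would cite these from \cite{GK}. If a self-contained argument is needed, the approach would be Mueller-type positivity and lower-tail bounds for $\log\cZ$, combined with Kolmogorov continuity for $\log\cZ_{1,0}(\cdot,\cdot)$ on the compact set $\bT^2$ to upgrade pointwise tail bounds to bounds on the supremum and infimum.
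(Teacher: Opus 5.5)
Your Route 2 is essentially the paper's proof: Cameron--Martin rewrites the moments of $\inf_{x,y}\cZ^V_{t,t-1}$ and $\sup_{x,y}\cZ^V_{t,t-1}$ as moments of the unperturbed propagator weighted by $\exp\{\la W(t)-W(t-1),V\ra-\tfrac12\|V\|^2_{L^2(\bT)}\}$, and then H\"older (the paper uses a general exponent $p'>p$ where you take $p'=2p$) together with \cite[Lemma 4.1]{GK} finishes the argument. Your Route 1 sandwich is also valid, but its constants depend on $\|V\|_\infty$ rather than only on $\|V\|_{L^2(\bT)}$, and the latter is the dependence the paper records in Theorem~\ref{cor020104-21}.
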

\proof The proof is based on an application of the Cameron-Martin formula, which incorporates the perturbation $\xi\mapsto \xi+V$: we can write
 \begin{align*}
  & \EE\Big[\big(\inf_{x,y\in\bT}
   \cZ_{t,t-1}^V(x,y)\big)^{-p}\Big]\\
  &
     =\EE\Big[\big(\inf_{x,y\in\bT}
 \cZ_{t,t-1}(x,y)\big)^{-p}\exp\left\{\la W(t)- W(t-1),V\ra-\frac12\|V\|_{L^2(\bT)}^2\right\}\Big].
\end{align*}
Then, using H\"older inequality, the right hand side can be estimated by
\begin{align*}
 &  \EE\Big[\big(\inf_{x,y\in\bT}
\cZ_{t,t-1}(x,y)\big)^{-p'}\Big]^{p/p'}  \exp\left\{ \frac{q-1}{2}\|V\|_{L^2(\bT)}^2\right\}
\end{align*}
for any $p'>p$, with $1/q+p/p'=1$. Using \cite[Lemma 4.1]{GK} we
conclude that there exists $C>0$ such that
\begin{equation}\label{e.ulbd1}
\EE\Big[\big(\inf_{x,y\in\bT}
\cZ_{t,t-1}^V(x,y)\big)^{-p}\Big] \leq
C.
\end{equation}
The argument for the estimate of the $p$-th moment of the supremum is analogous.
  \qed

\subsection{Endpoint distribution of directed polymer}

{The argument presented in \cite{GK} does not  directly apply to the process $(u_V(t))$. Instead the focus there was the closely related process given by $R^{-1}(u_V(t))$ with $R^{-1}$ defined in \eqref{012804-26-1}. This is the so-called endpoint density of directed polymer in a random environment, }
 see \cite[Chapter 8]{comets}.

For a directed polymer of length $t$, with the starting point
distributed according to density $\rho\in D_+(\bT)$, its endpoint distribution is defined as  
\begin{equation}\label{e.defPo}
\rho^V(t,x; \rho)=\frac{\cZ^V(t,x; \rho)}{\int_{\bT} \cZ^V(t,x'; \rho)dx'},
\end{equation}
{where 
\[
\cZ^V(t,x;\rho):=\int_{\bT}\cZ^V_{t,0}(x,y)\rho(y)dy.
\]
Sometimes $\rho^V$ is also referred to as the projective process, as it can be viewed as the projection of $\cZ^V$ onto the $L^1(\bT)-$unit ball.} 
Throughout the paper, we will omit the dependence of either $\cZ^V$, or
$\rho^V$ on the initial data   when there is no danger of
confusion.  
We have $\rho^V(t)\in   D_+(\bT)$ for any $t>0$.

It has been shown in \cite[Lemma 2.2]{GK} that the process $\big(\rho^V(t)\big)$ is
Markov. The proof shown there deals with the case $V=0$, but can be
adapter, modulo obvious modification, to the case of  $V\in C^2(\bT)$
considered here.
Denote by $\big(\tilde \U^V_t\big)$ and $\big(\tilde \cP^V_t\big)$
the transition probability semigroup and its adjoint 
corresponding to $\big(\rho^V(t)\big)$. The following result is a
consequence of the regularity result for   $\big({\cal
    \cZ^V}(t)\big)_{t\geq0}$.
\begin{lemma}\label{l.feller} For any $t\ge0$, 
  $$
  \tilde \U^V_t\big(C_b(  D_+(\bT))\big)\subset C_b( 
  D_+(\bT)).\quad
  $$
\end{lemma}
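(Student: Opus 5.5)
We need to show that for $G\in C_b(D_+(\bT))$ and $t\ge0$ the map $\rho\mapsto \tilde\U^V_tG(\rho)=\EE\, G(\rho^V(t;\rho))$ is continuous on $(D_+(\bT),{\rm d}_D)$. Boundedness is immediate since $|\tilde\U^V_tG|\le\|G\|_\infty$, and for $t=0$ there is nothing to prove. So fix $t>0$ and a sequence $\rho_n\to\rho$ in ${\rm d}_D$. By dominated convergence, it suffices to show that $\rho^V(t;\rho_n)\to\rho^V(t;\rho)$ in ${\rm d}_D$ almost surely, or at least in probability along a subsequence of every subsequence.

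The key step is a pathwise comparison through the linear structure of the propagator. Since $\cZ^V(t,x;\rho)=\int_{\bT}\cZ^V_{t,0}(x,y)\rho(y)\,dy$ with $\cZ^V_{t,0}(x,y)>0$, we have for every $x$
\[
e^{-\|\log\rho_n-\log\rho\|_\infty}\cZ^V(t,x;\rho)\le \cZ^V(t,x;\rho_n)\le e^{\|\log\rho_n-\log\rho\|_\infty}\cZ^V(t,x;\rho).
\]
Integrating in $x$ gives the same sandwich for the normalizing constants $\int_{\bT}\cZ^V(t,x';\cdot)\,dx'$. Taking logarithms in \eqref{e.defPo}, we obtain the deterministic Lipschitz bound
\[
{\rm d}_D\big(\rho^V(t;\rho_n),\rho^V(t;\rho)\big)\le 2\,{\rm d}_D(\rho_n,\rho),
\]
valid on the full-probability event where the propagator is positive and finite.

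For this to make sense, $\rho^V(t;\rho)$ must be a well-defined, strictly positive continuous density for all $\rho\in D_+(\bT)$ simultaneously. This requires a version of $\cZ^V_{t,0}(x,y)$ that is jointly continuous in $(x,y)$ and strictly positive for $t>0$, which I take from the standard regularity theory. The Cameron--Martin argument of Lemma~\ref{l.ulbd} transfers the $V=0$ facts to the perturbed case. For $t\ge1$ one may also use $\inf_{x,y}\cZ^V_{t,t-1}>0$ together with the Chapman--Kolmogorov identity.

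This regularity and positivity input is the only real obstacle. Once it is in place, the Lipschitz bound combined with the continuity of $G$ gives $G(\rho^V(t;\rho_n))\to G(\rho^V(t;\rho))$ almost surely, and bounded convergence yields $\tilde\U^V_tG(\rho_n)\to\tilde\U^V_tG(\rho)$. Measurability of $\tilde\U^V_tG$ follows from this continuity, which completes the proof of the lemma.
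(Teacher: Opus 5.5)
Your argument is correct. It is also more explicit than what the paper provides: the paper gives no proof beyond the remark that the lemma is a consequence of the regularity of $(\cZ^V(t))_{t\ge0}$, i.e.\ continuous dependence of the SHE solution on its initial data.

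You instead use the order-preserving linear structure of the propagator. With $\delta={\rm d}_D(\rho_n,\rho)$, the multiplicative sandwich $e^{-\delta}\cZ^V(t,\cdot;\rho)\le \cZ^V(t,\cdot;\rho_n)\le e^{\delta}\cZ^V(t,\cdot;\rho)$ survives the normalization in \eqref{e.defPo}. This gives the pathwise bound ${\rm d}_D\big(\rho^V(t;\rho_n),\rho^V(t;\rho)\big)\le 2\,{\rm d}_D(\rho_n,\rho)$, and continuity of $\tilde\U^V_tG$ then follows by bounded convergence, with no moment estimates at all.

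What your route buys is twofold. It needs only nonnegativity of $\cZ^V_{t,0}$ and the fact that $\rho^V(t;\rho)\in D_+(\bT)$ for $t>0$, which the paper also takes for granted. It also gives an almost surely uniform Lipschitz bound rather than mere continuity. A sup-norm ``regularity'' argument would additionally need a lower bound on $\inf_x \cZ^V(t,x;\rho)$ to pass to logarithms in the ${\rm d}_D$ metric. Your multiplicative comparison sidesteps this; it is the familiar non-expansion of Hilbert's projective metric under positive kernels, the same mechanism behind the synchronization estimate \eqref{012304-21z}.

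One small remark: the obstacle you flag, a version of $\rho\mapsto\rho^V(t;\rho)$ defined simultaneously for all $\rho$, is milder than you suggest. Only the countably many initial data $\rho,\rho_1,\rho_2,\ldots$ enter. For each $n$, the sandwich follows from linearity and the comparison principle applied to the nonnegative initial data $e^{\delta}\rho-\rho_n$ and $\rho_n-e^{-\delta}\rho$. Hence the bound holds for all $n$ on a single full-probability event.
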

 
 Having established Lemma \ref{l.ulbd}, the proofs of
  Theorem 2.3,  Proposition 4.7 and Lemma 4.10 of \cite{GK} can be repeated, almost
 verbatim, and we can formulate the following result. 
 \begin{theorem}
\label{cor020104-21}
Suppose that $F\in C_b (  D_+(\bT))$ and $\mu\in {\cal
  M}_1( D_+(\bT))$, then there exists a unique measure  $\pi_\infty\in {\cal
  M}_1( D(\bT))$  such that
\begin{equation}
\label{040104-21}
\lim_{t\to+\infty}\int_{ D_+(\bT)}\tilde {\cal
    U}_t^VFd\mu= \int_{  D_+(\bT)}F d\pi_\infty .
\end{equation}
The measure $\pi_\infty$ is invariant under  $\big(\tilde
\U^V_t\big)$.

In addition,  for any $\delta>0$, $p\in[1,+\infty)$
there exist constants $C,\lam>0$, depending only on
$\|V\|_{L^2(\bT)}$, $\delta$ and $p$,
such that
\begin{equation}
\label{012304-21z}
\sup_{\rho_1,\rho_2\in
  D_+(\bT)}\EE\big\|\rho^V(t;\rho_1)-\rho^V(t;\rho_2)\big\|^p_\infty\leq
C e^{-\lambda t},\quad t\ge \delta.
\end{equation}

Furthermore,    there exists $C>0$, depending
only on $\|V\|_{L^2(\bT)}$, $\delta$ and $p$ such that  
\begin{equation}
\label{013004-26}
\sup_{t\ge \delta,\,{\mu\in {\cal
    M}_1(\bT)}}\EE_{\mu}\Big[{\|\rho^V(t;\mu)\|_\infty^p+\|\rho^V(t;\mu)^{-1}\|_\infty^{p}}\Big]\le C.
\end{equation}
In  consequence, for any $p\ge1$,
\begin{equation}
\label{040104-21b}
\int_{ D_+(\bT)}\big(\|\rho\|^p_\infty +{\|\rho^{-1}\|^{p}_\infty}\big) \pi_\infty (d\rho)<+\infty.
\end{equation}
\end{theorem}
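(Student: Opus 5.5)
The plan is to transplant the synchronization argument of \cite{GK} (Theorem 2.3, Proposition 4.7, Lemma 4.10 there). The only place where the noise enters quantitatively is through the moment bounds of Lemma \ref{l.ulbd} on the one-step propagator. With that lemma in hand, the argument for $V\neq0$ is structurally the same as for $V=0$. First I will note that $\cZ^V$ has the Chapman--Kolmogorov (semigroup) property. In addition, the propagators $\cZ^V_{k+1,k}$, $k=0,1,\dots$, are i.i.d., because the noise increments on disjoint time intervals are independent and $V$ is time independent. Hence $\rho^V(n+\cdot)$ is obtained from $\rho^V(n)$ by applying independent random positive integral operators.

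\textbf{Contraction step.} For a positive kernel $K(x,y)$ with $m\le K\le M$, I will use the Hilbert projective metric, or equivalently a Doeblin-type minorization. Such a kernel contracts the projective distance between densities by a factor $\kappa=\tanh(\tfrac14\log(M/m)^2)$-type quantity $<1$. More simply, the total-variation distance between normalized images contracts by a factor $1-m/M$. Lemma \ref{l.ulbd} gives $\EE[(M_k/m_k)^p]\le C$, where $m_k,M_k$ denote the infimum and supremum of $\cZ^V_{k+1,k}$. So each step contracts by a random factor $\theta_k<1$ with i.i.d. $\theta_k$, and $\EE\theta_k^q\le \bar\theta<1$ for any fixed $q$. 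Multiplying over $n\approx t$ steps and using independence gives $\EE\prod_k\theta_k^q\le\bar\theta^{\,n}$.

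\textbf{Passing to $\|\cdot\|_\infty$.} Contraction in total variation or projective metric has to be upgraded to the sup norm. For this I will use the last step: $\rho^V(t)=\cZ^V_{t,t-1}\rho^V(t-1)/\text{norm}$. Then $\|\rho^V(t;\rho_1)-\rho^V(t;\rho_2)\|_\infty$ is bounded by $(M/m)^2$ times the $L^1$ distance at time $t-1$. Hölder together with Lemma \ref{l.ulbd} then yields \eqref{012304-21z}.

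\textbf{Uniform moment bound.} The same one-step representation gives \eqref{013004-26}. For any probability $\mu$ and $t\ge\delta$, we have $\rho^V(t)(x)\le \sup\cZ^V_{t,t-\delta'}/\inf\cZ^V_{t,t-\delta'}$ with $\delta'=\min(\delta,1)$. The analogue of Lemma \ref{l.ulbd} on an interval of length $\delta'$ follows by the same Cameron--Martin/Hölder argument from \cite[Lemma 4.1]{GK}. This is where the constants acquire their dependence on $\delta$ and $p$.

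\textbf{Existence and uniqueness of the limit.} Given \eqref{012304-21z}, uniqueness and the convergence \eqref{040104-21} follow as in \cite{GK}, by a pullback argument. Consider $\rho^V_{-s}(0;\rho)$, the endpoint density at time $0$ of the polymer started at time $-s$. By \eqref{012304-21z} applied with the Markov property, this family is Cauchy in $L^p(\Omega;C(\bT))$ as $s\to\infty$. Its limit has a law $\pi_\infty$ supported in $D(\bT)$, and positivity follows from the inverse moment bound. Stationarity of the noise shows that $\pi_\infty$ is invariant and is the weak limit of $\tilde\cP^V_t\mu$ for any $\mu$. Lemma \ref{l.feller} lets us pass to the limit in $\tilde\U^V_t$ to check invariance. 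Finally, \eqref{040104-21b} follows from \eqref{013004-26} by Fatou's lemma.

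The main obstacle is the contraction step: making precise the Birkhoff/Doeblin contraction with random constants and controlling the product of dependent-looking factors. I expect to handle it by working on integer time blocks, where the factors are genuinely independent.
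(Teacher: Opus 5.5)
Your plan matches the paper's. The paper proves this theorem only by observing that, once Lemma \ref{l.ulbd} is available, the synchronization argument of \cite{GK} (Theorem 2.3, Proposition 4.7 and Lemma 4.10 there) goes through almost verbatim. Your steps are a sound way to carry that out: the contraction on integer time blocks, the last-block upgrade to $\|\cdot\|_\infty$, the one-block bound $\rho^V(t)\le \sup\cZ^V/\inf\cZ^V$ for \eqref{013004-26}, and the pullback construction of $\pi_\infty$.

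One claim has to go, though. The normalized map $T\rho=K\rho/\|K\rho\|_{L^1}$ does \emph{not} contract total variation by $1-m/M$, and this is not equivalent to the projective-metric contraction. Write $T\rho=\int P(\cdot\,|y)\,w_\rho(y)\,dy$ with $P(x|y)=K(x,y)/c(y)$, $c(y)=\int K(x,y)\,dx$, and $w_\rho=c\rho/\int c\rho$. Doeblin gives the factor $1-m/M$ only for the Markov part $P$. The reweighting $\rho\mapsto w_\rho$ can enlarge $L^1$ distances by up to a factor $M/m$. For a concrete case, take two points with $K(\cdot,1)\equiv m$, $K(1,2)=M$, $K(2,2)=m$. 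Moving $\delta_1$ by $\epsilon$ toward $\delta_2$ then moves $T\delta_1$ in $L^1$ at rate $(M-m)/(4m)$ relative to the input. This exceeds $1-m/M$ once $M>4m$ and exceeds $1$ once $M>5m$, and here $M/m$ is an unbounded random variable.

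So run the whole iteration in Hilbert's metric $d_H$, which is scale invariant and therefore unaffected by the normalization.
After the first block, $d_H\le 2\log(M_1/m_1)$ for arbitrary $\rho_1,\rho_2$; this is what yields the $\sup$ over initial data in \eqref{012304-21z}.
Each later block multiplies $d_H$ by $\theta_k=\tanh(\frac12\log(M_k/m_k))$.
Only at the end convert to the norms you need, using either $\|f-g\|_{L^1}\le e^{d_H(f,g)}-1$ or $\|f-g\|_\infty\le \|g\|_\infty\,(e^{d_H(f,g)}-1)$ for normalized densities.
To control the exponential, use $e^{d_H}\le (M/m)^2$ after any block, together with H\"older and Lemma \ref{l.ulbd}.
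With this correction your sup-norm step, which bounds the time-$t$ distance by $2(M/m)^2$ times the $L^1$ distance at time $t-1$, goes through.
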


\medskip

\subsection{Proof of Theorem~\ref{thm012504-26}}

Note that, given $u\in\rH$ we can write
\begin{equation}
  \label{Ru}
  u_V(t;u)=R\big(\rho^V(t;R^{-1}(u)\big),
\end{equation}
where the operator $R$ was defined in \eqref{012804-26}. 
The transition probability semigroup and its dual satisfy
\begin{align*}
&\U^V_tF(u)=\big(\tilde \U^V_tF\circ R\big)(R^{-1}(u)),\quad u\in\rH\quad
\mbox{and}\\ 
&
\cP^V_t \mu=\tilde \cP^V_t (\mu R)R^{-1},\quad t\ge0,\mu\in {\rm M}_1(\rH).
\end{align*}
A direct consequence of Lemma \ref{l.feller} is the following.
\begin{lemma}\label{l.feller1} For any $t\ge0$
  $$
   \U^V_t\big(C_b(\rH)\big)\subset C_b(\rH).
  $$
\end{lemma}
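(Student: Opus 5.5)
The plan is to transport the Feller property from the projective process to the Burgers process through the conjugation \eqref{Ru}. Fix $t\ge0$ and $F\in C_b(\rH)$. By the identity $\U^V_tF(u)=\big(\tilde\U^V_t(F\circ R)\big)(R^{-1}(u))$ stated just above, it suffices to check three things. First, $F\circ R\in C_b(D_+(\bT))$. Second, Lemma \ref{l.feller} then gives $\tilde\U^V_t(F\circ R)\in C_b(D_+(\bT))$. Third, composing with $R^{-1}:\rH\to D_+(\bT)$ preserves continuity and boundedness.

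Boundedness is immediate throughout, since $|\U^V_tF|\le\|F\|_\infty$. Continuity of $F\circ R$ and of the final composition both reduce to $R$ being a homeomorphism between $(D_+(\bT),{\rm d}_D)$ and $(\rH,{\rm d}_{\rH})$, as asserted after \eqref{012804-26}. I would verify this briefly.

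For $\rho\in D_+(\bT)$, the function ${\frak p}(R(\rho))$ is $\log\rho-\log\rho(0)$. Hence
\[
{\rm d}_{\rH}(R\rho_1,R\rho_2)\le 2\|\log\rho_1-\log\rho_2\|_\infty .
\]
In the other direction, by \eqref{012804-26-1}, if $h_i={\frak p}(u_i)$ then
\[
\log R^{-1}(u_1)-\log R^{-1}(u_2)=h_1-h_2-\log\frac{\int e^{h_1}}{\int e^{h_2}} .
\]
The normalising log-ratio is bounded by $\|h_1-h_2\|_\infty$, so
\[
{\rm d}_D(R^{-1}u_1,R^{-1}u_2)\le 2\,{\rm d}_{\rH}(u_1,u_2).
\]
Both maps are therefore Lipschitz, and the three steps above go through.

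The only delicate point is the validity of the conjugation identity itself, i.e. \eqref{Ru}. One needs that the Burgers solution started at $u$ coincides pathwise with $R$ applied to the projective process started at $R^{-1}(u)$. This holds because $u_V=\nabla\log \cZ^V$, and normalising $\cZ^V$ by its total mass only shifts $\log\cZ^V$ by a constant in $x$, which $\nabla$ kills. The initial datum $R^{-1}(u)$ is exactly the density proportional to $e^{{\frak p}(u)}$, so both processes start from the same height profile. Since this identity is taken as given in the paper, I expect no real obstacle: the lemma is a direct corollary of Lemma \ref{l.feller} combined with the bi-Lipschitz property of $R$.
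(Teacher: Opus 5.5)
Your proposal is correct and follows the paper's own route: the paper obtains this lemma as a direct consequence of Lemma \ref{l.feller} via the conjugation $\U^V_tF=\big(\tilde\U^V_t(F\circ R)\big)\circ R^{-1}$, using that $R$ is a homeomorphism between $(D_+(\bT),{\rm d}_D)$ and $(\rH,{\rm d}_{\rH})$. The paper only asserts that homeomorphism property; your two Lipschitz estimates (each with constant $2$) verify it correctly.
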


\subsubsection{Proof of part (i)}

Define
$
\nu_V:=\pi_\infty R^{-1}
$ - a Borel probability measure on $\rH$.
We claim that for any  {$F\in C_b(\rH)$}, equality \eqref{AS}
holds. Together with Lemma \ref{l.feller1} they also imply \eqref{inv-m}.
To prove  \eqref{AS} it suffices to show 
\begin{equation}
   \label{AS1}
   \lim_{t\to+\infty}\EE F\big(u_V(t;u)\big) =\int_{\rH}F 
   d\nu_V ,\quad \mbox{ for any }u\in \rH.
 \end{equation}
 Given $\eps\in(0,1)$ define
 $$
D_\eps:=[\rho\in D_+(\bT):\,\eps\le \rho\le \eps^{-1}].
$$
The set is closed both in $D_+(\bT)$ and $ D(\bT)$. The two metrics
${\rm d}_{\rm D}$ and ${\rm d}_\infty$ are equivalent on $D_\eps$.

Choose an arbitrary $\sigma>0$.
Recalling \eqref{Ru} we can write 
\begin{align*}
  &
    \Big|\EE F\big(u_V(t;u)\big) -\int_{\rH}F 
   d\nu_V \Big|=\Big|\EE F \circ R\big(\rho^V(t;R^{-1}(u))\big) -\int_{D_+(\bT)}F\circ R
  d\pi_\infty\Big|\\
  &
    \le \Big|\EE F \circ R\big(\rho^V(t;R^{-1}(u))\big) -\EE \Big[F
    \circ R\big(\rho^V(t;R^{-1}(u))\big),\,\rho^V(t;R^{-1}(u))\in D_\eps\Big]\Big|
  \\
  &
   + \Big|\EE \Big[F
    \circ R\big(\rho^V(t;R^{-1}(u))\big),\,\rho^V(t;R^{-1}(u))\in D_\eps\Big]-\int_{D_+(\bT)}F\circ R
  d\pi_\infty\Big|.
\end{align*}
Denote the terms on the utmost right hand side by $I_1(t)$ and
$I_2(t)$ repectively. By virtue of \eqref{013004-26} we can choose
$\eps$ so small that $I_1(t)<\sigma$ for all $t>1$. Concerning the
other term,
we write $I_2(t)\le I_{2,1}(t) +I_{2,2}(t)$, where
 \begin{align*}
   &
     I_{2,2}(t):=\Big|\EE \Big[(F
    \circ R)^*\big(\rho^V(t;R^{-1}(u))\big) \Big]-\int_{\mm{D_+}(\bT)}F\circ R
     d\pi_\infty\Big|,\\
   &
     I_{2,1}(t):=\Big|\EE \Big[(F
    \circ R)^*\big(\rho^V(t;R^{-1}(u))\big),
     \,\rho^V(t;R^{-1}(u))\not\in D_\eps \Big] \Big|.
 \end{align*}
 Here $(F
    \circ R)^*$ is a continuous extension of $F
    \circ R_{|D_\eps}$ to $  D(\bT)$ such that $\|(F
    \circ R)^*\|_\infty\le \|F
     \|_\infty$. Using again  \eqref{013004-26}  we can choose
     $\eps$ so small that $I_{2,1}(t)<\sigma$ for all $t>1$. Hence,
     applying Theorem \ref{cor020104-21} we conclude that
   \begin{align*}
  &
   \limsup_{t\to+\infty} \Big|\EE F\big(u_V(t;u)\big) -\int_{\rH}F 
    d\nu_V \Big|\le 2\sigma+  \lim_{t\to+\infty}  I_{2,2}(t)\\
     &
       =2\sigma +\Big|\int_{ D(\bT)}(F\circ R)^*
     d\pi_\infty-\int_{D_+(\bT)}F\circ R
     d\pi_\infty\Big|\le {2\sigma}+ 2\|F
     \|_\infty\pi_\infty(D_\eps^c).
\end{align*}  
Since  
$
\pi_\infty (D_+(\bT))=1
$
we can choose $\eps$ so small, so that
the last term on the right hand side is also less than $\sigma$. This
ends the proof of \eqref{AS1}, concluding also the proof of part (i) of
the theorem.

\subsubsection{Proof of part (ii)} To conclude \eqref{p-moment}, note
that by  
\eqref{012804-26}
we have
\begin{align*}
   &
    |u(\varphi)|=|\langle
     \log\rho,\nabla\varphi\ra_{L^2(\bT)}| \le 
 \|\varphi\|_{H^1(\bT)}  \| 
   \log\rho\|_\infty\quad\mbox{for any   $\varphi\in H^1(\bT)$} .
\end{align*}
Therefore
\begin{align*}
   &
     \int_{\rH}\|u\|_{H^{-1}(\bT)}^p \nu_V(du) = \int_{\rH}{\sup_{\varphi:\|
     \varphi\|_{H^{1}(\bT)}=1}}|u(\varphi)|^p \nu_V(du) \le \int_{D_+(\bT)}\| 
   \log\rho\|_\infty ^p \pi_\infty(d\rho) .
 \end{align*}
Using an elementary bound $|\log x|\le x+\frac{1}{x}$,
$x\in(0,+\infty)$, together with \eqref{040104-21b} we conclude \eqref{p-moment}.
 
\subsubsection{Proof of part (iii)}

Let $\pi:=\mu R^{-1}$. Thanks to \eqref{p-moment} and
\eqref{p-moment1} there exists $C,\delta>0$ such that
\begin{align}
  \label{043004-26}
        \sup_{t\in[0,\delta]}\Big| \EE_{\mu}\Big[u_V(t)(\varphi)\Big]-\int_{\rH}u(\varphi)
     \nu_V(du)\Big|\le C\|\varphi\|_{H^1(\bT)}. 
\end{align}
For $t\ge \delta$ we can write
\begin{align}
  \label{023004-26}
   &
     \Big| \EE_{\mu}\Big[u_V(t)(\varphi)\Big]-\int_{\rH}u(\varphi)
     \nu_V(du)\Big|\\
   &
     =  \Big|\int_{D_+(\bT)}\la \log \rho ,\nabla\varphi\ra_{L^2(\bT)}\
     \pi_\infty(d\rho)- \EE_{\pi}\Big[\la \log
     \rho^V(t),\nabla\varphi\ra_{L^2(\bT)}\Big]\Big|.\notag
\end{align}
{Here $\EE_\pi$ is interpreted as the expectation on $(\rho^V(t))$ with $\rho^V(0)$ distributed according to $\pi$.}  
Thanks to \eqref{013004-26} for we have
$
\sup_{t\ge \delta}\EE_{\mu}\Big[|u_V(t)(\varphi)|\Big]<+\infty.
$ 
 
%
By mean value theorem,  we have
 \begin{align}
   \label{033004-26}
 \Big| \log  \rho_2- \log
                 \rho_1 \Big|\le 
                \Big(\frac{1}{\rho_1}+\frac{1}{\rho_2}\Big) |\rho_2- \rho_1|,
 \end{align}
using  which  the right hand side of
\eqref{023004-26} can be estimated by
\begin{align*}&
                \|\varphi\|_{H^1(\bT)}\int_{\rH^2}\pi_\infty(d\rho_1)
                \pi(d\rho_2)  \EE\big\|\log \rho^V(t;\rho_1) -\log
                \rho^V(t;\rho_2) \big\|_{\infty}\\
              &
       \le  \|\varphi\|_{H^1(\bT)}\sup_{\rho_1,\rho_2\in D_+(\bT)}
                \Big\{\EE
                \Big\|\frac{1}{\rho_1}+\frac{1}{\rho_2}\Big\|_\infty^2
                \Big\}^{1/2}\sup_{\rho_1,\rho_2\in D_+(\bT)}
                \Big\{\EE\big\|  \rho^V(t;\rho_1) - 
                \rho^V(t;\rho_2) \big\|_{\infty}^2     \Big\}^{1/2}
\end{align*}
and   an application of \eqref{012304-21z} and \eqref{013004-26}
allows us to show \eqref{F-M-stab} for $t\ge\delta$. This combined
with \eqref{043004-26} ends the
proof of part (iii).\qed

\section{Discretization and functional inequalities}

\label{Disc}

{One of the main difficulties in dealing with equations of the form
  \eqref{011804-26} is the singularity of the noise. Indeed,  to make
  sense of the stochastic Burgers equation without appealing to the Cole-Hopf transformation relies on the tools from the theory of singular SPDE \cite{GP17,Ha13}. For our purpose of studying the properties of the  invariant measure, it seems natural to study the evolution of the law of $u_V(t)$, which should be governed by a Fokker-Planck type equation thereby involves the   generator of the corresponding Markov process. For this singular SPDE, it is already a highly nontrivial task to construct a domain of the generator \cite{GP20}. Although it might be possible to work directly in the setting of \cite{GP20}, we choose to approximate the problem by a finite dimensional SDE, where one can see rather clearly the calculations involving the generator. This section is devoted to such approximations.}

{On the discrete level, we study the evolution of the relative entropy and $L^p-$norm of the density, and obtain estimates that are uniform in the size of discretization, by applying functional inequalities associated with the underlying Gaussian measure.}

\bigskip

Suppose $\eta\in C^\infty(\bbR)$ is  a non-negative function supported on
$[-1/4,1/4]$, such that \begin{align*}
                          \eta(-x)=\eta(x),\quad\mbox{and}\quad
                          \int_{\bbR}\eta(x)dx=1.
                        \end{align*}
Its Fourier transform
$
{\cal F}(\eta)(\xi)=\int_{\bbR}e^{-2\pi i\xi x}\eta(x)dx$, $\xi\in\bbR$
is even and real valued.

Throughout this section, we fix some small $\eps>0$  (say
  $\eps<1/100$) and large positive integer $N$ (say  $N\ge 100$). Define
\begin{equation}
  \label{040405-26}
  \eta^{(\eps)}(x):=\eps^{-1}\eta(x/\eps)\quad\mbox{ and}\quad 
  \eta_2^{(\eps)}(x):=\eta^{(\eps)}\star \eta^{(\eps)} (x).
  \end{equation}
Then,
\begin{align}
   \label{050405-26}
  &{\cal F}\big(\eta^{(\eps)}\big)(\xi)={\cal F}(\eta) (\eps\xi)\quad\mbox{and}\quad
      {\cal F}\big(\eta^{(\eps)}_2\big) (\xi)=\big({\cal F}(\eta) (\eps\xi)\big)^2.
\end{align}

We
consider $1$-period extensions of both $\eta^{(\eps)}$ and
$\eta_2^{(\eps)}$, so that the  functions are defined on $\bbT$, which we denote by
the same symbols.
The Fourier coefficients of $\eta^{(\eps)}$ and $\eta^{(\eps)}_2$ are given by 
\begin{equation}
  \label{ft}
  \begin{split}
  &\hat\eta^{(\eps)}(k)=\int_{\bT}\eta^{(\eps)}(x)e^{-2\pi i
    kx}dx=\int_{\bT}\eta^{(\eps)}(x)\cos(2\pi kx)dx\\
  &
  =\int_{\bbR}\frac{\sin(\pi\xi) }{\pi\xi}{\cal F}(\eta)(\eps(\xi+k) )d\xi,\quad\mbox{and},\\
    &\hat\eta^{(\eps)}_2(k)=\int_{\bbR}\frac{\sin(\pi\xi) }{\pi\xi}\big({\cal F}(\eta)\big)^2(\eps(\xi+k)) d\xi, \quad
  k\in\bbZ.
\end{split}
\end{equation}
Obviously
$
\hat\eta^{(\eps)}(0)=\hat\eta^{(\eps)}_2(0)=1.
$

\subsection{Approximation scheme}

Suppose that   $\bT_N$ is a discrete torus of size $N$,
i.e. $\bT_N:=\{0,\ldots,N-1\}$, with the addition operation modulo
$N$. For any $f:\bbT_N\to\bbR$, we let
\begin{align*}
&\nabla f_j=f_{j+1}-f_j,\quad  \nabla^* f_j=f_{j-1}-f_j,\quad\mbox{and}\\ 
&
\Delta f_j=\mm{-}\nabla^* \nabla f_j=f_{j+1}+f_{j-1}-2f_j,\quad  j\in\bT_N  .
\end{align*}
In what follows we let
$$
\Om_N:=\big[(u_j)_{j\in\bT_N}\in\bbR^{\bT_N}:\,\sum_{j\in\bT_N}u_j=0\big].
$$

Let 
\begin{align}\label{e.defalpha2}
   &
     \alpha(j)=\frac{1}{N}\Big[\eta^{(\eps)}\Big(\frac{j}{N}\Big)+\mm{c_{N,\eps}}\frac{\delta_{0,j}}{N^{100}}\Big],
     \quad \alpha_2(j)= \alpha\star \alpha(j)= \sum_{j'\in\bT_N}\al(j-j')\al(j') .
\end{align}
{ Here $c_{N,\eps}\in(0,1)$ is some constant to be determined. For simplicity, we have omitted the dependence of $\alpha$ and $\alpha_2$ on $\eps,N$. One may think of $\alpha_2$ as an approximation of $\eta^\eps\star \eta^\eps$, as $N\to\infty$, because for any continuous functions $f:\bT\to\R$ we have 
\begin{equation}\label{e.5291}
\begin{aligned}
&\sum_{j\in \bT_N} \alpha_2(i-j)f(\frac{j}{N})\approx \frac{1}{N^2} \sum_{j,j'\in\bT_N} \eta^{(\eps)}(\frac{i-j-j'}{N})\eta^{(\eps)}(\frac{j'}{N})f(\frac{j}{N})\\
&\approx \int_{\bT} \eta^\eps\star\eta^\eps(\frac{i}{N}-x)f(x)dx.
\end{aligned}
\end{equation}
}
 
{First, we note that the  matrix $[\alpha_2(j-j')]_{j,j'\in\bT_N}$ is non-negative
definite, as 
\begin{align*}
  &\sum_{j,j'\in\bT_N}\alpha_2(j-j')\xi_j\xi_{j'}^\star=\frac{1}{N}\sum_{k\in\hat\bT_N}\hat
    \alpha^2(k) |\hat\xi(k)|^2\geq 0, 
\end{align*}
with  $\hat\xi(k)$ the Fourier transform of   the sequence
$\big(\xi_j\big)_{j\in\bT_N}$, i.e.
$$
\hat \xi(k)=   \sum_{j\in\bT_N}\exp\left\{-i2\pi  j k
\right\} \xi(j), \quad k\in  \hat\bT_N :=\big\{\frac{\ell}{N},\,\ell =0,\ldots,N-1\big\}.
$$  
Note that, since $\al(-j)=\al(j)$, we have $\hat \alpha(k)\in\bbR$. Now we claim that one can choose the constant $c_{N,\eps}\in(0,1)$ so that $\hat{\alpha}(k)\neq0$ for all $k\in \hat\bT_N$, and as a result the matrix  $[\alpha_2(j-j')]_{j,j'\in\bT_N}$ is positive definite. This is because one can write 
\[
\hat{\alpha}(k)=\sum_{j\in\bT_N}\exp\left\{-i2\pi  j k
\right\} \alpha(j)=\sum_{j\in\bT_N}\exp\left\{-i2\pi  j k
\right\} \frac{1}{N}\eta^{(\eps)}\Big(\frac{j}{N}\Big)+ \frac{c_{N,\eps}}{N^{101}}.
\]
Denote the first term on the r.h.s. by $\beta(k)$, and let $(1)_k$ be the vector whose coordinates are  $1$, one can continuously change the value of $c_{N,\eps}$ so that $(\hat{\alpha}(k))_k=(\beta(k))_k+\frac{c_{N,\eps}}{N^{101}}(1)_k$ has no zero coordinates.}


 Define   the process
   $
   \Big(u_{N,V}\big(t,\frac{j}{N}\big)\Big)_{j\in\bT_N}
   $
  as the solution of the following stochastic differential equation 
\begin{equation}
\label{burgers.dSVN}
\begin{split}
&
d u_{N,V}\big(t,\frac{j}{N}\big)= \Big\{\frac{ 1}{2}\big(N^2\Delta\big) u_{N,V}\big(t,\frac{j}{N}\big)+\frac{1}{6}
(N\nabla) \alpha_2\star
G\Big (u_{N,V}\big(t,\frac{j}{N}\big)\Big)\Big\}dt\\
&
+ (N \nabla V^{(\alpha_2)}) \big(\frac{j}{N}\big) dt+ d( N\nabla)
W^{(\alpha)}\big(t,\frac{j}{N}\big),\quad j\in \bT_N,\\
&
\mbox{where }\quad W^{(\alpha)}\big(t,\frac{j}{N}\big)=\sqrt{N} w_j^{(\al)}.
\end{split}
\end{equation}
Here
\begin{equation}
\label{burgers.dSV}
\begin{split}
&
 V^{(\alpha_2)}\big(\frac{j}{N}\big)=V\star\al_2(j)=\sum_{j'\in\bT_N}\al_2(j-j')
V\Big(\frac{j'}{N}\Big),\\
&
G (u)(j):= u_j^2+u_j u_{j-1}+u_{j-1}^2 ,\quad u=(u_j)_{j\in\bT_N},\notag\\
&w_j^{(\alpha )}(t):=\alpha\star w_j(t), \notag
\end{split}
\end{equation}
and   $(w_j(t))_{t\ge0}$, $j\in\bbT_N$ are  independent, one dimensional,
standard Brownian motions defined over the probability space
$(\Om,{\cal F},\PP)$. 

{Let us pause to explain the choice of the scaling here. For fixed $\eps>0$, as $N\to\infty$, we expect that $N^2\Delta,N\nabla$ to converge to the continuous $\Delta,\nabla$ respectively. Secondly, by \eqref{e.5291}, $\alpha_2\star G$ and $V^{(\alpha_2)}$ converges to the corresponding continuous counterpart. For the driving noise, since $dW^{(\alpha)}$ has the spatial covariance function given by $N\alpha_2$ which converges to $\eta^\eps\star \eta^\eps$, one may expect that its limit is given by a spacetime white noise mollified by $\eta^\eps\star \eta^\eps$. As a result, on the formal level, the large $N-$limit of \eqref{burgers.dSVN} is given by the SPDE \eqref{eq:SBEV} below.}

The generator of   diffusion \eqref{burgers.dSVN} is given by 
\begin{align}
  \label{L-alN}
  & {\cal  L}^{(\alpha)}_{N,V}f(u)={\cal L}^{(\alpha)}_N f(u) +{\cal A}_{N,V}f(u),\quad\mbox{where} \quad {\cal L}^{(\alpha)}_N={\cal S}^{(\alpha)}_N+{\cal A}^{(\alpha)}_N, \\
 & {\cal   A}_N^{(\alpha)}f(u) =
   \frac{1}{6}\sum_{j\in\bT_N}(N\nabla)\alpha_2\star
    G(u)\Big(\frac{j}{N}\Big) \partial_{u_j}f(u),\quad m=1,2 \quad\mbox{and}\notag\\
 &
 G\big (u\big) \Big(\frac{j}{N}\Big):=
   u^2\Big(\frac{j}{N}\Big) +u\Big(\frac{j}{N}\Big)
   u\Big(\frac{j-1}{N}\Big)                +u^2\Big(\frac{j-1}{N}\Big) , \notag\\
   &
     {\cal
     A}_{N,V}f(u)= \sum_{j\in\bT_N} (N \nabla V^{(\alpha_2)}) \big(\frac{j}{N}\big) \partial_{u_j}f(u), \notag
\end{align}
and 
\begin{align}
  \label{gen-SN}
 & {\cal  S}^{(\alpha)}_Nf(u)= -\sum_{j,j'\in\bT_N}\frac{ 1}{2} (
   {N^3}\Delta \al_2)\big({j-j'}\big)
   \partial_{u_j}\partial_{u_{j'}}f(u)+\frac{1}{2}\sum_{j\in\bT_N}(N^2\Delta) u
   \big(\frac{j}{N}\big)\partial_{u_j}f(u).
\end{align}
We   embed $\Om_N$ into $C(\bT)$ by the linear interpolation of the
nodal points $\Big(\frac{j}{N} ,u\big(\frac{j}{N}\big)\Big)$,
$j\in\bT_N$. We can treat the evolution, described on
$\Om_N$ by \eqref{burgers.dSVN}, as  taking place  in $C(\bT)$.
The resulting process shall be denoted then by  $
\Big(u_{N,V}(t,x)\Big)_{(t,x)\in[0,+\infty)\times \bbT}$.  

In the
unperturbed case $V=0$ we omit writing the superscript  (or subscript) in our notation,
e.g. we write $u_{N}(t,x) =u_{N,0}(t,x)$.

For $u=\Big(u\big(\frac{j}{N}\big)\Big)_{j\in\bT_N}\in\Om_N$ we define the Gaussian density
\begin{equation}
  \label{CNj}
  \begin{split}
&\nu_{S}^{(N)} (du):=\frac{1}{Z_N}\exp\left\{-\frac{1}{2}{\cal
    C}_N^{-1}u\cdot u\right\}  du ,\quad
\big({\cal
    C}_N\big)_{j,j'}:=N\alpha_2(j-j'),\quad j,j'\in\bT_N.
\end{split}
\end{equation}
Here $Z_N$ is the normalizing factor. We
denote  the push-forward of $\nu_{S}^{(N)} (du)$   on $C(\bT)$ by the
same symbol.

According to \cite[Proposition 2.3]{FQ}, the measure $\nu_{S}^{(N)}(du)$ is
invariant under the dynamics described by \eqref{burgers.dSVN} with $V=0$. We also
have  the following result, which states that ${\cal   S}^{(\alpha)}_N$ and ${\cal
    A}^{(\alpha)}_N$ are the symmetric and anti-symmetric part of the generator when $V=0$, see \cite[Lemma 2.2]{FQ}. 
\begin{proposition}
  \label{prop023103-26N}
  For any $f,g\in C^2_b(\Om_N)$ we have
\begin{align}
  \label{012703-26N}
  &\int_{\Om_N}{\cal   S}^{(\alpha)}_Nf(u)
    g(u)\nu_{S}^{(N)}(du)=\int_{\Om_N}f(u) {\cal
    S}^{(\alpha)}_Ng(u)\nu_{S}^{(N)} (du) 
\end{align}
and
\begin{align}
  \label{012703-26bN}
  & \int_{\Om_N}{\cal
    A}^{(\alpha)}_Nf(u) g(u)\nu_{S}^{(N)} (du) 
   =- \int_{\Om_N}f(u){\cal   A}^{(\alpha)}_Ng(u)
   \nu_{S}^{(N)}(du).
\end{align}
\end{proposition}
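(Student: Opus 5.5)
The plan is to verify both identities by Gaussian integration by parts with respect to $\nu_S^{(N)}$ on the finite-dimensional space $\Om_N$. The covariance is ${\cal C}_N=N[\alpha_2(j-j')]$, which is positive definite by the choice of $c_{N,\eps}$. Strictly speaking the Gaussian lives on $\Om_N$ (mean-zero vectors); we can instead work with the nondegenerate Gaussian on $\bbR^{\bT_N}$ and note that all operators involved preserve $\sum_j u_j$, since they contain $\nabla$ or $\Delta$ in front. The basic identity is
\[
\int \partial_{u_j}\phi\, d\nu_S^{(N)}=\int \phi\,({\cal C}_N^{-1}u)_j\, d\nu_S^{(N)} .
\]

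For \eqref{012703-26N}, we will show that ${\cal S}^{(\alpha)}_N$ is the Ornstein--Uhlenbeck generator ${\cal S}f=\tfrac12\sum_{j,j'} Q_{jj'}\partial_j\partial_{j'}f-\tfrac12 (B u)\cdot\nabla f$. Here $Q=-N^3\Delta\alpha_2$ and $B=-N^2\Delta$, both circulant and symmetric. Such a generator is symmetric with respect to the Gaussian of covariance ${\cal C}$ exactly when $B{\cal C}=Q$, i.e. the Lyapunov relation $B{\cal C}+{\cal C}B^T=2Q$, where commutativity holds because everything is circulant. Checking this is immediate: $(-N^2\Delta)(N\alpha_2\star)=-N^3\Delta\alpha_2\star$. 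Then integration by parts gives
\[
\int {\cal S}f\, g\, d\nu_S^{(N)}=-\tfrac12\int Q\nabla f\cdot\nabla g\, d\nu_S^{(N)},
\]
which is symmetric in $f,g$.

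For \eqref{012703-26bN}, write ${\cal A}f=b\cdot\nabla f$ with $b_j=\tfrac16 (N\nabla)\alpha_2\star G(u)(j)$. Integration by parts gives
\[
\int {\cal A}f\, g\,d\nu_S^{(N)}=-\int f\,{\cal A}g\,d\nu_S^{(N)}-\int fg\,\big(\mathrm{div}\, b-b\cdot {\cal C}_N^{-1}u\big)d\nu_S^{(N)},
\]
so it suffices to show that $\mathrm{div}\, b=0$ and $b\cdot{\cal C}_N^{-1}u=0$.

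For the second identity, since ${\cal C}_N^{-1}$ commutes with convolution by $\alpha_2$ and ${\cal C}_N=N\alpha_2\star$, we get $b\cdot{\cal C}_N^{-1}u=\tfrac16\sum_j(\nabla G)(j)\,u_j$ up to constants. Summing by parts turns this into $-\sum_j G(j)(u_j-u_{j-1})=-\sum_j(u_j^3-u_{j-1}^3)=0$ by telescoping on the torus; this is precisely why $G$ is chosen as $u_j^2+u_ju_{j-1}+u_{j-1}^2$.

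For the divergence, compute
\[
\partial_{u_j}b_j=\tfrac N6\sum_{i}\big(\alpha_2(j+1-i)-\alpha_2(j-i)\big)\partial_{u_j}G(i),
\]
where $\partial_{u_j}G(i)$ is nonzero only for $i=j,j+1$. Summing over $j$ and using translation invariance leaves the expression $\sum_j[(\alpha_2(1)-\alpha_2(0))(2u_j+u_{j-1})+(\alpha_2(0)-\alpha_2(-1))(u_{j+1}+2u_j)]$. Since $\alpha_2$ is even, this equals $(\alpha_2(1)-\alpha_2(0))\sum_j(2u_j+u_{j-1}-u_{j+1}-2u_j)=0$.

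The main obstacle is bookkeeping of indices and of the mean-zero constraint. Everything reduces to showing $\mathrm{div}\, b=0$ and $b\cdot{\cal C}_N^{-1}u=0$, the latter being the telescoping cubic identity, and to the Lyapunov check for ${\cal S}$. This is the argument of \cite[Lemma 2.2]{FQ}, and we would follow it.
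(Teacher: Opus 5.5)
Your proposal is correct and follows essentially the paper's route. The paper gives no argument of its own but cites \cite[Lemma 2.2]{FQ}, whose proof is exactly this Gaussian integration by parts: reversibility of the Ornstein--Uhlenbeck part from $B\mathcal{C}_N=Q$, and, for $\mathcal{A}^{(\alpha)}_N$, the pointwise identities $\mathrm{div}\,b=0$ and $b\cdot\mathcal{C}_N^{-1}u=\tfrac16\sum_j(\nabla G)(j)u_j=-\tfrac16\sum_j(u_j^3-u_{j-1}^3)=0$.

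The only point to tighten is the passage through $\bbR^{\bT_N}$. Since $b$ depends on the zero mode, you cannot simply pull back along the projection onto $\Om_N$. You can, however, integrate by parts directly on $\Om_N$, because $b$, $Bu$ and the range of $Q$ are tangent to $\Om_N$; moreover, since $\sum_j b_j\equiv 0$, the $\Om_N$-divergence of $b$ equals its full divergence.
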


The operator ${\cal   S}^{(\alpha)}_N$ can be rewritten in the divergence form
\begin{align}
  \label{div-fN}
  {\cal
  S}^{(\alpha)}_Nf(v)= -\frac{1}{2}\exp\Big\{\frac12 {\cal
    C}^{-1}_Nv\cdot v\Big\}({N^3}\Delta{\mathbf \alpha_2}D_v)\cdot\Bigg(\exp\Big\{-\frac12 {\cal
    C}^{-1}_Nv\cdot v\Big\}D_vf(v)\Bigg),
\end{align}
with $D_v=(\partial_{v_0},\ldots, \partial_{v_{N-1}})$. Hence, for any
$f,g\in C_b^2(\Om_N)$,
\begin{align}
  \label{011104-26}
 \int_{\Om_N} {\cal
  S}^{(\alpha)}_Nf(v)g(v) \nu_{S}^{(N)} (dv) = \int_{\Omega_N}\frac{1}{2}
  {N^3}\Delta{\mathbf \alpha_2}D_vf(v)\cdot D_vg(v) \nu_{S}^{(N)} (dv) .
\end{align}

Denote by $Q_{N,V}$ the law of  $\big(u_{N,V}(t)\big)$ on  {${\cal C}=C([0,\infty),C(\bT))$, }
assuming that the distribution of initial data $u_{N,V}(0)$ is given
by $\nu_{S}^{(N)} (du)$. 


\subsection{Evolution of density function and entropy estimate}

\label{discr}

 {In this section, we assume that the initial distribution of
  $u_{N,V}(0)$ is absolutely continuous with respect to $\nu_S^{(N)}$,
  the invariant measure for the unperturbed dynamics, and study the
  evolution of the density of the law of $u_{N,V}(t)$. In particular, we investigate the evolution of the relative entropy with respect to $\nu_S^{(N)}$ and derive a differential inequality which leads to the uniform bound on the entropy.} 

We suppose that at $t=0$ the initial distribution of $
u_{N,V}(0)$ is given by
\begin{equation}
  \label{muN0}
\mu_N(0,du)=f_N (u)\nu_{S}^{(N)} (du),
\end{equation}
where $f_N:\R^N\to\R$ is a smooth density.
Let
\[
\mu_N^{(V)}(t,du)=\Q^{(N)}_V(t,u;
 f_N) \nu_{S}^{(N)} (du), 
\] where  $\Q^{(N)}_V(t,u;
 f_N)$ is  
 the  density of  the distribution of $
u_{N,V}(t)$  with respect to
 $\nu_{S}^{(N)}$.

 Recall that the generator of the diffusion is given by $\mathcal{L}_{N,V}^{(\alpha)}=\mathcal{L}_N^{(\alpha)}+\mathcal{A}_{N,V}$ in \eqref{L-alN}.  
The density $\Q^{(N)}_V(t,u; f_N )$  satisfies the Fokker-Planck equation
\begin{equation}
  \label{FPN}
  \begin{split}
&\partial_t \Q^{(N)}_V(t,u;
 f_N) = \Big({\cal L}^{(\alpha)}_{N,V}\Big)^\star \Q^{(N)}_V(t,u;
 f_N) \\
 &
 = \Big( {\cal L}^{(\alpha)}_N\Big)^\star \Q^{(N)}_V(t,u;
 f_N) + {\cal A}_{N,V}^\star  \Q^{(N)}_V(t,u;
 f_N) ,
\end{split}
\end{equation}
where the adjoints are computed w.r.t. the Gaussian measure
$\nu_{S}^{(N)}   $.

The relative entropy of $\mu_N^{(V)}(t)$ with respect
to $\nu_{S}^{(N)}  $ is defined by 
\begin{equation}
  \label{eq:7-1aN}
 \begin{split}
&   {\mathbf{H}}_N^{(V)}(t;f_N) := \int_{\bbR^N}  \Q^{(N)}_V(t,u;
 f_N)  \log \Q^{(N)}_V(t,u;
 f_N)  
 \nu_{S}^{(N)} (du)\\
 &
 = \EE_{{\mu_N(0)}} \big[\log   \Q^{(N)}_V(t,u_{N,V}(t);
 f_N)\big] .
 \end{split}
\end{equation}
 Its evolution satisfies
  \begin{equation}\label{e.1281aN}
    \begin{split}
     & \frac{d {\mathbf{H}}_N^{(V)}(t;f_N) }{dt}= \int_{\Om_N}  {({\cal L}^{(\alpha)}_{N})^*}   \Q^{(N)}_V(t,u;
 f_N)  \log \Q^{(N)}_V(t,u;
 f_N)  
 \nu_{S}^{(N)} (du) \\
 &
 +\int_{\Om_N}  {\cal A}_{N,V}^\star \Q^{(N)}_V(t,u;
 f_N)  \log \Q^{(N)}_V(t,u;
 f_N)   
   \nu_{S}^{(N)} (du).
\end{split}
  \end{equation}

For the second term on the right hand side of \eqref{e.1281aN} the
following formula holds.
\begin{lemma}\label{l.l1aN}
  We have
  \begin{equation}
    \label{022803-26N}
\begin{aligned}
   & \int_{\Om_N}  {\cal A} _{N,V} ^\star \Q^{(N)}_V(t,u;
 f_N)  \log \Q^{(N)}_V(t,u;
 f_N)   
 \nu_{S}^{(N)} (du)\\
 &
 =  \int_{\Om_N} \Q^{(N)}_V(t,u;
 f_N) {\frak l}_{N,V}(u)
 \nu_{S}^{(N)} (du)= \EE_{{\mu_N(0)}} {\frak
  l}_{N,V}\Big(u_{N,V} (t ) \Big) .
\end{aligned}
\end{equation}
Here
\begin{equation}
  \label{lN}
  {\frak l}_{N,V}(u):=\frac{1}{N}\sum_{j\in\bT_N} u \Big(\frac{j}{N}\Big) N\nabla
  V\Big(\frac{j}{N}\Big).
  \end{equation}
\end{lemma}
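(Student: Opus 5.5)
The plan is a direct finite-dimensional integration by parts against the Gaussian measure $\nu_S^{(N)}$. The point to exploit is that ${\cal A}_{N,V}$ is a first-order operator with \emph{constant} drift coefficients
\[
b_j:=(N\nabla V^{(\alpha_2)})\big(\tfrac{j}{N}\big),\qquad j\in\bT_N .
\]
Its adjoint with respect to a Gaussian therefore has an explicit form: an anti-symmetric part plus a multiplication operator that is linear in $u$.

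\emph{Step 1: compute the adjoint.} For smooth $f,g$ with Gaussian growth bounds I will integrate by parts in each direction, using $\partial_{u_j}$ of the density $\exp\{-\frac12{\cal C}_N^{-1}u\cdot u\}$. This should give
\[
{\cal A}_{N,V}^\star g(u)=-{\cal A}_{N,V}g(u)+g(u)\sum_{j\in\bT_N}b_j\,({\cal C}_N^{-1}u)_j .
\]
Since $\nu_S^{(N)}$ lives on the hyperplane $\Om_N$, I will first check that $b\in\Om_N$. This holds because $\sum_j\nabla(\cdot)_j=0$. Hence $\sum_j b_j\partial_{u_j}$ is a derivative tangent to $\Om_N$. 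Moreover ${\cal C}_N=N\alpha_2\star$ is a circulant that is positive definite (by the choice of $c_{N,\eps}$) and preserves $\Om_N$. So the integration by parts is legitimate on $\Om_N$ with ${\cal C}_N^{-1}$ understood there.

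\emph{Step 2: insert $g=\Q:=\Q^{(N)}_V(t,\cdot;f_N)$ and test against $\log\Q$.} By the adjoint relation,
\[
\int{\cal A}_{N,V}^\star\Q\,\log\Q\,d\nu_S^{(N)}=\int\Q\,{\cal A}_{N,V}\log\Q\,d\nu_S^{(N)}=\int{\cal A}_{N,V}\Q\,d\nu_S^{(N)} .
\]
Applying the Step 1 identity once more with test function $1$ gives $\int{\cal A}_{N,V}\Q\,d\nu_S^{(N)}=\int\Q\,{\cal A}^\star_{N,V}1\,d\nu_S^{(N)}$, with ${\cal A}^\star_{N,V}1=\sum_j b_j({\cal C}_N^{-1}u)_j=u\cdot{\cal C}_N^{-1}b$ (${\cal C}_N$ symmetric).

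\emph{Step 3: identify the linear functional.} Discrete convolution commutes with $\nabla$, so $b=N\nabla(\alpha_2\star V)=\alpha_2\star(N\nabla V)$, where $V$ means the grid sequence $V(j/N)$. Since ${\cal C}_N=N\alpha_2\star$, this gives ${\cal C}_N^{-1}b=\frac1N\,N\nabla V$. Therefore
\[
u\cdot{\cal C}_N^{-1}b=\frac1N\sum_j u\big(\tfrac jN\big)\,N\nabla V\big(\tfrac jN\big)={\frak l}_{N,V}(u).
\]
The last equality in \eqref{022803-26N} is then just the statement that $\Q$ is the density of the law of $u_{N,V}(t)$.

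\emph{Main obstacle.} The only real difficulty is justifying the integrations by parts for $\Q$ and $\log\Q$, namely regularity, positivity and decay at infinity of the Fokker--Planck solution. I would handle this by standard finite-dimensional parabolic theory. For smooth positive $f_N$ with suitable growth, $\Q$ is smooth and strictly positive; this uses that the noise covariance $N^3\Delta\alpha_2$ is nondegenerate on $\Om_N$, so the equation is uniformly elliptic on $\Om_N$ with polynomial drift. Gaussian moment bounds on $u_{N,V}(t)$ control the boundary terms. If needed, I would truncate with smooth cutoffs $\chi_R$ and let $R\to\infty$, using $\int\Q(1+|u|)\,d\nu_S^{(N)}<\infty$.
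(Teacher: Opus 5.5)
Your proposal is correct and follows essentially the same route as the paper. You move ${\cal A}_{N,V}$ onto $\log\Q$, use $\Q\,\partial_{u_j}\log\Q=\partial_{u_j}\Q$, integrate by parts against the Gaussian weight to produce $\sum_j b_j({\cal C}_N^{-1}u)_j$, and identify ${\cal C}_N^{-1}b$ with the grid increments of $V$. Your checks that $b\in\Om_N$ and that the boundary terms vanish are details the paper leaves implicit.
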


\begin{proof}
 From \eqref{L-alN} we can write
  \begin{align*}
     &\int_{\Om_N}  {\cal A}_{N,V}^\star \Q^{(N)}_V(t,u;
 f_N)  \log \Q^{(N)}_V(t,u;
 f_N)   
       \nu_{S}^{(N)} (du)\\
     &
       = \int_{\Om_N} \Q^{(N)}_V(t,u;
 f_N)  {\cal A} _{N,V} \Big(\log \Q^{(N)}_V(t;
 f_N) \Big)(u)  
       \nu_{S}^{(N)} (du)
  \\
    &
       = \frac{1}{Z_N} \int_{\Om_N}  \sum_{j\in\bT_N}   N\nabla
      V^{(\al_2)}(\frac{j}{N})  \partial_{u_j       }\Q^{(N)}_V(t,u;
      f_N)  \exp\Big\{-\frac12 {\cal
      C}^{-1}_Nu\cdot u\Big\}  du.
\end{align*}

      Integrating by parts we obtain that the utmost right hand side equals (recall that $\mathcal{C}_N=N\alpha_2$)
      \begin{align*}
    &
       \frac{1}{ Z_N} \int_{\Om_N}
      \sum_{j\in\bT^N}  N\nabla V^{(\al_2)}\Big(\frac{j}{N}\Big)  \Q^{(N)}_V(t,u;
      f_N) ((N\alpha_2)^{-1}u) \Big(\frac{j}{N}\Big)        \exp\Big\{-\frac12 {\cal
      C}^{-1}_Nu\cdot u\Big\}   du\\
    &
      = \int_{\Om_N} \Q^{(N)}_V(t,u;
 f_N)\Big[ \frac{1}{N}\sum_{j\in\bT^N}   u \Big(\frac{j}{N}\Big)  N\nabla V\Big(\frac{j}{N}\Big)      \Big]     \nu_{S}^{(N)} (du)
 \end{align*}
 and formula \eqref{022803-26N} follows. 
\end{proof}

\medskip
 
{Recall that $\mathcal{L}_N^{(\alpha)}=\mathcal{S}_N^{(\alpha)}+\mathcal{A}_N^{(\alpha)}$, with $\mathcal{S}_N^{(\alpha)}$ and $\mathcal{A}_N^{(\alpha)}$ the symmetric and anti-symmetric part of $\mathcal{L}_N^{(\alpha)}$ respectively. For the anti-symmetric part, we have the following lemma:} 
\begin{lemma}\label{l.l2aN}
We have 
\begin{equation}
  \label{01303-26N}
  \begin{split}
    &
    \int_{\Om_N} \Q^{(N)}_V(t,u;
 f_N)   {\cal
    A}^{(\alpha)}_N\log \Q^{(N)}_V(t;
 f_N) (u)\nu_{S}^{(N)} (du)\\
 &
 =\int_{\Om_N}  {\cal
    A}^{(\alpha)}_N\Q^{(N)}_V(t;
 f_N)(u) \nu_{S}^{(N)} (du)=0.
 \end{split}
\end{equation}
\end{lemma}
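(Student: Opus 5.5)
The plan is to split the claim into two equalities: a pointwise identity coming from the first-order (derivation) structure of ${\cal A}^{(\alpha)}_N$, and a vanishing statement coming from the anti-symmetry in Proposition~\ref{prop023103-26N}. Write $\Q=\Q^{(N)}_V(t;f_N)$ for brevity.

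\textbf{First equality.} Since ${\cal A}^{(\alpha)}_N$ is a first order differential operator without zero-order term, the chain rule gives
\[
{\cal A}^{(\alpha)}_N\log \Q=\Q^{-1}{\cal A}^{(\alpha)}_N\Q
\]
wherever $\Q>0$. Multiplying by $\Q$ then yields the first equality pointwise. For this we need $\Q$ to be strictly positive and smooth on $\Om_N$. Positivity and smoothness should hold because the noise covariance ${\cal C}_N=N\alpha_2$ is positive definite (thanks to the choice of $c_{N,\eps}$). Hence the operator $-\tfrac12 N^3\Delta\alpha_2$ appearing in ${\cal S}^{(\alpha)}_N$ is elliptic on the mean-zero subspace $\Om_N$, and the Fokker--Planck density is smooth and positive for $t>0$ by the strong maximum principle (or Hörmander) for nondegenerate diffusions. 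At $t=0$ one can take $f_N>0$, or pass to the limit $t\downarrow0$.

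\textbf{Second equality.} Apply \eqref{012703-26bN} with $f=\Q$ and $g\equiv1$. Since ${\cal A}^{(\alpha)}_N1=0$,
\[
\int {\cal A}^{(\alpha)}_N\Q\,d\nu_S^{(N)}=-\int \Q\,{\cal A}^{(\alpha)}_N1\,d\nu_S^{(N)}=0.
\]
Equivalently, this says $({\cal A}^{(\alpha)}_N)^\star 1=0$, which is precisely the invariance of $\nu_S^{(N)}$ under the $V=0$ drift, proven in \cite{FQ}.

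\textbf{Main obstacle.} Proposition~\ref{prop023103-26N} is stated for $f,g\in C_b^2(\Om_N)$, whereas $\Q$ is not bounded a priori, and the coefficients of ${\cal A}^{(\alpha)}_N$ grow quadratically in $u$. I would handle this by truncation. Take $\Q\chi_R$ with $\chi_R$ a smooth cutoff equal to $1$ on $|u|\le R$ and with bounded gradient. Then
\[
\int {\cal A}^{(\alpha)}_N(\Q\chi_R)\,d\nu_S^{(N)}=0,
\]
and we have
\[
{\cal A}^{(\alpha)}_N(\Q\chi_R)=\chi_R{\cal A}^{(\alpha)}_N\Q+\Q\,{\cal A}^{(\alpha)}_N\chi_R.
\]
The error term is bounded by
\[
C\int_{|u|>R}\Q\,(1+|u|^2)\,d\nu_S^{(N)}=C\,\EE_{\mu_N(0)}\big[(1+|u_{N,V}(t)|^2)\mathbf 1_{|u_{N,V}(t)|>R}\big],
\]
which tends to $0$ provided $u_{N,V}(t)$ has a finite second moment. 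That moment bound follows from a standard Lyapunov estimate on the finite-dimensional SDE \eqref{burgers.dSVN}, since the nonlinear drift is energy-conserving by anti-symmetry, provided the initial density $f_N$ has finite second moment. Dominated convergence for $\chi_R{\cal A}^{(\alpha)}_N\Q$ requires ${\cal A}^{(\alpha)}_N\Q\in L^1(\nu_S^{(N)})$, which one gets from Gaussian-type decay of $\Q$ and $\nabla\Q$. Alternatively, one can justify it via the same cutoff argument applied directly to $\int \Q\,{\cal A}^{(\alpha)}_N\log\Q$, using $\partial_{u_j}\Q$ integrability estimates from parabolic regularity. This integrability justification is the step I expect to require the most care; the algebra itself is immediate.
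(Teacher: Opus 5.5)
Your argument is the paper's argument: the chain rule turns $\Q\,{\cal A}^{(\alpha)}_N\log\Q$ into ${\cal A}^{(\alpha)}_N\Q$ pointwise, and the anti-symmetry \eqref{012703-26bN} with $g\equiv1$ (so ${\cal A}^{(\alpha)}_N1=0$) makes the integral vanish. The paper does not address positivity of $\Q$ or the integrability needed to apply \eqref{012703-26bN} outside $C^2_b$, so your truncation discussion is an addition to it; one correction there is that the quantity conserved by the nonlinear drift is ${\cal C}_N^{-1}u\cdot u$, not $|u|^2$, and that is the Lyapunov function to use for the second-moment bound (it controls $|u|^2$ because norms are equivalent in finite dimension).
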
 

\begin{proof}
We have, cf \eqref{L-alN},
\begin{align*}
&{\cal
    A}^{(\alpha)} \log \Q^{(N)}_V(t;
 f_N) (u)=\frac{1}{6}\sum_{j\in\bT_N}\Big(N\nabla\alpha_2 \Big)\star
    G(u)\big(\frac{j}{N}\big)\partial_{u_j}\log \Q^{(N)}_V(t,u;
                 f_N) \\
  &
    =\frac{1}{6}\sum_{j\in\bT_N}\Big(N\nabla\alpha_2 \Big)\star
    G(u)\big(\frac{j}{N}\big) \frac{\partial_{u_j}  \Q^{(N)}_V(t,u;
                 f_N) }{\Q^{(N)}_V(t,u;
                 f_N)}.
\end{align*}
Therefore, the left hand side of \eqref{01303-26N} equals 
\[
  \begin{split}
        &
   \frac{1}{6}\sum_{j\in\bT_N}\int_{\Om_N}  \Big(N\nabla\alpha_2 \Big)\star
    G(u)\big(\frac{j}{N}\big)\partial_{u_j}  \Q^{(N)}_V(t,u;
                 f_N) \nu_{S}^{(N)} (du)\\
                 &
                  =\int_{\Om_N}  {\cal
    A}^{(\alpha)}_N\Q^{(N)}_V(t;
 f_N)(u) \nu_{S}^{(N)} (du)=0,
\end{split}
\]
by virtue of \eqref{012703-26bN}.
\end{proof}

\medskip

Putting together    \eqref{e.1281aN} and the
  results of Lemmas~\ref{l.l1aN}, and \ref{l.l2aN} we conclude the following.
\begin{proposition}
  We have 
\begin{equation}\label{e.1282aN}
  \begin{split}
    \frac{d {\mathbf{H}}_N^{(V)}(t;f_N) }{dt}=& \int_{\Om_N}  {\cal S}^{(\alpha)}_N   \Q^{(N)}_V(t,u;
 f_N)   \log \Q^{(N)}_V(t,u;
 f_N)  
 \nu_{S}^{(N)} (du) \\
 &
 +\EE_{{\mu_N(0)}} {\frak
  l}_{N,V}\Big(u_{N,V}(t) \Big).
\end{split}
\end{equation}
\end{proposition}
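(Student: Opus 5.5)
The identity \eqref{e.1282aN} follows directly from \eqref{e.1281aN} by splitting the first term on its right-hand side according to $\mathcal{L}_N^{(\alpha)}=\mathcal{S}_N^{(\alpha)}+\mathcal{A}_N^{(\alpha)}$ and then applying Lemmas~\ref{l.l1aN} and \ref{l.l2aN}.

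First, by Proposition~\ref{prop023103-26N}, the adjoints with respect to $\nu_S^{(N)}$ satisfy $(\mathcal{S}_N^{(\alpha)})^\star=\mathcal{S}_N^{(\alpha)}$ and $(\mathcal{A}_N^{(\alpha)})^\star=-\mathcal{A}_N^{(\alpha)}$. Hence
\[
(\mathcal{L}_N^{(\alpha)})^\star \Q^{(N)}_V=\mathcal{S}_N^{(\alpha)}\Q^{(N)}_V-\mathcal{A}_N^{(\alpha)}\Q^{(N)}_V .
\]
Substituting this into the first integral of \eqref{e.1281aN} produces the $\mathcal{S}$-term of \eqref{e.1282aN}, together with
\[
-\int_{\Om_N}\mathcal{A}_N^{(\alpha)}\Q^{(N)}_V\,\log\Q^{(N)}_V\,d\nu_S^{(N)} .
\]

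Next, by the antisymmetry \eqref{012703-26bN}, applied with $f=\Q^{(N)}_V$ and $g=\log \Q^{(N)}_V$, this extra integral equals $\int \Q^{(N)}_V\,\mathcal{A}_N^{(\alpha)}\log\Q^{(N)}_V\,d\nu_S^{(N)}$. Lemma~\ref{l.l2aN} shows that this vanishes. Finally, the second integral in \eqref{e.1281aN} equals $\EE_{\mu_N(0)}\,{\frak l}_{N,V}(u_{N,V}(t))$ by Lemma~\ref{l.l1aN}. Together these give \eqref{e.1282aN}.

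The only real obstacle is justifying the use of \eqref{012703-26bN} and of the integrations by parts. Those identities are stated for $C^2_b$ functions, whereas $\log \Q^{(N)}_V$ and its derivatives need not be bounded on $\Om_N$.

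The plan is to use that \eqref{burgers.dSVN} is a finite-dimensional nondegenerate diffusion. Its diffusion matrix $N^3(-\Delta)\alpha_2$ is positive definite on $\Om_N$, since $\hat\alpha(k)\ne0$. For a smooth initial density $f_N$ which we may take to be bounded above and below by positive constants, parabolic regularity and the maximum principle then give that $\Q^{(N)}_V$ is smooth and strictly positive, with Gaussian-type growth controls. If $f_N$ is not of this form, one reduces to it by approximation.

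With these controls, one replaces $\log \Q^{(N)}_V$ by $\log(\Q^{(N)}_V+\delta)$ multiplied by a cutoff $\chi(u/R)$. The identities are verified for this truncation, and then one lets $R\to\infty$ and $\delta\to0$ by dominated convergence. The Gaussian weight of $\nu_S^{(N)}$ absorbs the polynomial growth of the drift $G$ and of ${\frak l}_{N,V}$.

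Differentiating ${\mathbf H}_N^{(V)}$ under the integral sign is justified by the same bounds.
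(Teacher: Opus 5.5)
Your proposal is correct and follows essentially the paper's argument: write $(\mathcal{L}_N^{(\alpha)})^\star=\mathcal{S}_N^{(\alpha)}-\mathcal{A}_N^{(\alpha)}$ in \eqref{e.1281aN}, use antisymmetry and Lemma~\ref{l.l2aN} to remove the $\mathcal{A}_N^{(\alpha)}$ contribution, and use Lemma~\ref{l.l1aN} to identify the perturbation term. The paper does these integrations by parts formally, so your truncation and regularity paragraph adds justification the paper omits; it does not change the route.
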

{One may view the first term on the r.h.s. as the dissipation and the second term as the forcing induced by the perturbation $V$.} Invoking the log Sobolev estimate, see Appendix \ref{AppA}, we obtain
 \begin{proposition}
   \label{prop023103-26N}
\begin{equation}\label{entropy-estN}
  \begin{split}
&    \frac{d {\mathbf{H}}_N^{(V)}(t;f_N) }{dt}\le  -{\frak g}_N
{\mathbf{H}}_N^{(V)}(t;f_N)  
 +\EE_{\mm{\mu_N(0)}} {\frak
  l}_{N,V}\Big(u_{N,V}(t) \Big),\\
 &
 \mbox{where}\quad {\frak g}_N :=\mm{8}N^2\sin^2\Big(\frac{\pi}{N}\Big).
\end{split}
\end{equation}
\end{proposition}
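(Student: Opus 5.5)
The plan is to start from the identity \eqref{e.1282aN} and show that the dissipation term satisfies
\[
\int_{\Om_N}{\cal S}^{(\alpha)}_N\Q^{(N)}_V\,\log\Q^{(N)}_V\,d\nu_{S}^{(N)}\le -{\frak g}_N\,{\mathbf{H}}_N^{(V)}(t;f_N).
\]
This is a log-Sobolev inequality for the Gaussian measure $\nu_S^{(N)}$, written in terms of the Dirichlet form of ${\cal S}^{(\alpha)}_N$. The forcing term $\EE_{\mu_N(0)}{\frak l}_{N,V}(u_{N,V}(t))$ is simply carried along unchanged. Throughout, $\Q=\Q^{(N)}_V(t,\cdot;f_N)$; it is smooth and positive for $t>0$, since the diffusion is nondegenerate on $\Om_N$ ($\alpha_2$ was made positive definite), so all manipulations below are legitimate.

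\emph{Step 1 (Dirichlet form).} Apply \eqref{011104-26} with $f=\Q$ and $g=\log\Q$. Set $A:=-N^3\Delta\alpha_2$, the matrix of the convolution operator $v\mapsto -N^3\Delta(\alpha_2\star v)$. This gives
\[
\int {\cal S}^{(\alpha)}_N\Q\,\log\Q\,d\nu_S^{(N)}=-\frac12\int \frac{A D\Q\cdot D\Q}{\Q}\,d\nu_S^{(N)}=-2\int A D\sqrt{\Q}\cdot D\sqrt{\Q}\,d\nu_S^{(N)}.
\]

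\emph{Step 2 (Gaussian log-Sobolev).} The measure $\nu_S^{(N)}$ is a centered Gaussian on $\Om_N$ with covariance ${\cal C}_N=N\alpha_2$ restricted to mean-zero vectors. Gross' inequality, as recalled in Appendix~\ref{AppA}, reads
\[
\mathrm{Ent}_{\nu_S^{(N)}}(g^2)\le 2\int {\cal C}_N Dg\cdot Dg\,d\nu_S^{(N)}.
\]
Applying it with $g=\sqrt{\Q}$, and using that $\Q$ is a probability density, gives ${\mathbf{H}}_N^{(V)}(t;f_N)\le 2\int{\cal C}_N D\sqrt{\Q}\cdot D\sqrt{\Q}\,d\nu_S^{(N)}$.

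\emph{Step 3 (comparing ${\cal C}_N$ and $A$).} This is the key step. I need a matrix inequality ${\cal C}_N\le\kappa_N A$ on $\Om_N$. Both matrices are circulant, so they are diagonalized by the discrete Fourier basis. On the mode $k=\ell/N$ their eigenvalues are $N\hat\alpha(k)^2$ for ${\cal C}_N$ and $4N^3\sin^2(\pi k)\,\hat\alpha(k)^2$ for $A$. The Fourier transform of $D\sqrt{\Q}$ a priori has a $k=0$ component. However, $\Q$ is a function on the hyperplane $\Om_N$, so its gradient is taken within $\Om_N$ and has zero mean; hence only $\ell=1,\dots,N-1$ matter. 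Because $\hat\alpha(k)\ne0$ by the choice of $c_{N,\eps}$, the ratio of eigenvalues is $\big(4N^2\sin^2(\pi\ell/N)\big)^{-1}\le\big(4N^2\sin^2(\pi/N)\big)^{-1}$. So one may take $\kappa_N=\big(4N^2\sin^2(\pi/N)\big)^{-1}$.

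\emph{Step 4 (conclusion).} Combining Steps 1--3,
\[
\int {\cal S}^{(\alpha)}_N\Q\,\log\Q\,d\nu_S^{(N)}\le -\frac{1}{\kappa_N}\cdot\frac{1}{2}\,{\mathbf{H}}_N^{(V)}\cdot 2\;=\;-c\,N^2\sin^2\Big(\frac{\pi}{N}\Big){\mathbf{H}}_N^{(V)}.
\]
Inserting this into \eqref{e.1282aN} yields \eqref{entropy-estN}.

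The main obstacle is bookkeeping rather than any conceptual step. Three points need care: the precise numerical constant $c$, which must be matched to the normalization of the log-Sobolev inequality in the appendix to reproduce ${\frak g}_N=8N^2\sin^2(\pi/N)$ (the sign and factor conventions in \eqref{011104-26} have to be tracked exactly); the correct treatment of the zero mode through the restriction to $\Om_N$; and verifying that the Fourier normalization makes the eigenvalue of $-N^2\Delta$ equal to $4N^2\sin^2(\pi k)$. If the constant obtained this way comes out as $4$ rather than $8$, I would re-examine the factor $\tfrac12$ in the second-order part of ${\cal S}^{(\alpha)}_N$ against the diffusion coefficient of \eqref{burgers.dSVN}.
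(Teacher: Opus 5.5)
Your argument is the paper's argument. The paper combines \eqref{e.1282aN} with \eqref{011104-26} exactly as in your Step~1. It then applies Lemma~\ref{lm013103-26aa}, which is your Steps~2--3 packaged as a log-Sobolev inequality for the Dirichlet form, with $B=-\tfrac12N^2\Delta$, $A=-N^3\Delta\alpha_2$ and $\lambda_B=2N^2\sin^2(\pi/N)$. On $\Om_N$ one has $\mathcal{C}_N=\tfrac12B^{-1}A$, so your $\kappa_N$ is $1/(2\lambda_B)$.

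Your bookkeeping is correct, and executed it gives exactly $c=4$, i.e. $\mathfrak{g}_N=4N^2\sin^2(\pi/N)$. Do not look for a lost factor in $\mathcal{S}^{(\alpha)}_N$. With $\Sigma=N^{3/2}\nabla(\alpha\star\cdot)$ one has $\Sigma\Sigma^T=-N^3\Delta\alpha_2$, which is precisely the second-order part of \eqref{gen-SN}.

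The value $8$ cannot be reached, because \eqref{entropy-estN} with $\mathfrak{g}_N=8N^2\sin^2(\pi/N)$ is false. Take $a=(\cos(2\pi j/N))_{j\in\bT_N}\in\Om_N$ and $f_N(u)=\exp\{a\cdot u-\tfrac12\mathcal{C}_Na\cdot a\}$. Then $\mathbf{H}_N^{(V)}(0;f_N)=\tfrac12\mathcal{C}_Na\cdot a>0$ and $D\sqrt{f_N}=\tfrac12 a\sqrt{f_N}$. By your Step~1,
\[
\int_{\Om_N}\mathcal{S}^{(\alpha)}_Nf_N\log f_N\,d\nu_S^{(N)}=-\tfrac12\,Aa\cdot a=-4N^2\sin^2\Big(\frac{\pi}{N}\Big)\mathbf{H}_N^{(V)}(0;f_N),
\]
since $a$ is a common eigenvector of the two circulants with $Aa=4N^2\sin^2(\pi/N)\,\mathcal{C}_Na$. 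Both of your inequalities are equalities here: Gross's inequality is an equality for exponential tilts, and $\mathcal{C}_N\le\kappa_NA$ is an equality on the lowest mode. Since \eqref{e.1282aN} is an identity, the claimed bound with $8$ fails at $t=0$, and by continuity for small $t>0$.

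The paper obtains $8$ only because Lemma~\ref{lm013103-26aa} is stated with constant $\lambda_B$; the sharp constant is $\lambda_B/2$. To see this, linearize the lemma at $f=1+\epsilon\,\xi\cdot x$ with $B\xi=\lambda_B\xi$. It would give $\mathcal{E}_{B,A}(\xi\cdot x)\ge 2\lambda_B\,\mathrm{Var}(\xi\cdot x)$. But $\xi\cdot x$ is an eigenfunction of the Ornstein--Uhlenbeck generator with eigenvalue $-\lambda_B$, so $\mathcal{E}_{B,A}(\xi\cdot x)=\lambda_B\,\mathrm{Var}(\xi\cdot x)$. Note that the constant $\lambda_B/2$ is exactly what produces the rate $2\lambda_B$ shown in \eqref{ei0}.

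So your proposal proves the correct, sharp form of the proposition, with $\mathfrak{g}_N=4N^2\sin^2(\pi/N)\to4\pi^2$. Downstream this changes only numerical constants, such as $8\pi^2\to4\pi^2$ in \eqref{031304-263f} and the admissible range of $\beta$ in Theorem~\ref{thm081704}, not the conclusions.
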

\proof From \eqref{e.1282aN} and \eqref{011104-26} we obtain
\begin{align*}
 & \frac{d {\mathbf{H}}_N^{(V)}(t;f_N) }{dt}= \frac{1}{2}
  \int_{\Om_N}  {N^3}\Delta{\mathbf \alpha_2}D_u\Q^{(N)}_V(t,u;
 f_N)\cdot D_u\log \Q^{(N)}_V(t,u;
  f_N)  \nu_{S}^{(N)} (du) \\
  &
    +\EE_{{\mu_N(0)}} {\frak
  l}_{N,V}\Big(u_{N,V}(t) \Big)
\end{align*}
\begin{align*}
 & = 
  \int_{\Om_N}  {2N^3}\Delta{\mathbf \alpha_2}D_u\big[\Q^{(N)}_V(t,u;
 f_N)\big]^{1/2}\cdot D_u\big[\Q^{(N)}_V(t,u;
 f_N)\big]^{1/2} \nu_{S}^{(N)} (du) \\
  &
    +\EE_{{\mu_N(0)}} {\frak
  l}_{N,V}\Big(u_{N,V}(t) \Big).
\end{align*}
Using Lemma \ref{lm013103-26aa} we get
\begin{align}
  \label{011304-26}
 & -\frac12
  \int_{\Om_N}  \big({N^3}\Delta\big){\mathbf \alpha_2}D_u\big[\Q^{(N)}_V(t,u;
 f_N)\big]^{1/2}\cdot D_u\big[\Q^{(N)}_V(t,u;
 f_N)\big]^{1/2} \nu_{S}^{(N)} (du) \notag\\
&\ge \inf_{|\xi|=1,\,\xi\perp 1}\big\la
  \big(-\frac12N^2\Delta\big)\xi,\xi\big\ra \int_{\Om_N}  \Q^{(N)}_V(t,u;
 f_N)\log \Q^{(N)}_V(t,u;
 f_N) \nu_{S}^{(N)} (du) .
\end{align}
We have
\begin{align*}
\inf_{|\xi|=1,\,\xi\perp 1}\big\la
  \big(-\frac12N^2\Delta\big)\xi,\xi\big\ra \ge  2N^2\sin^2(\frac{\pi}{N}),
\end{align*}
and \eqref{entropy-estN} follows.
\qed
%

\medskip

{Recall that $\nu_S^{(N)}$ is the Gaussian measure defined in \eqref{CNj} and ${\frak l}_{N,V}^2(u)$ was defined in \eqref{lN}.} Denote
\begin{equation}
  \label{hVN}
 {\frak v}_{N,V}:=\int_{\Om_N}  {\frak l}_{N,V}^2(u) \nu_{S}^{(N)} (du) =\frac{1}{N} \sum_{j\in\bT_N}  (N\nabla
V^{(\al_2)})\Big(\frac{j}{N}\Big) (N\nabla
 V )\Big(\frac{j}{N}\Big),
  \end{equation}
{which is the variance of ${\frak l}_{N,V}(u)$ under the unperturbed Gaussian measure. We have the following bound on the forcing term on the r.h.s. of \eqref{e.1282aN}:}
\begin{lemma}
  \label{lm012104-26}
  For any $\beta>0$ we have
  \begin{equation}\label{012104-26}
  \begin{split}
&     \EE_{{\mu_N(0)}} {\frak
  l}_{N,V}\Big({u_{N,V} (t)} \Big) 
\le  
  \frac{1}{\beta}\Big\{
{\mathbf{H}}_N^{(V)}(t;f_N)+ 
\frac{\beta^2}{2}   {\frak v}_{N,V} \Big\},\quad t\ge0,\,N=1,2,\ldots .
\end{split}
\end{equation}
  \end{lemma}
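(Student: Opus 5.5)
This is the Donsker–Varadhan (Gibbs) variational inequality, combined with an exact computation of an exponential moment of a linear functional under a Gaussian measure. Write $\Q=\Q^{(N)}_V(t,\cdot;f_N)$. By \eqref{lN} we have
\[
\EE_{\mu_N(0)}{\frak l}_{N,V}\big(u_{N,V}(t)\big)=\int_{\Om_N}\Q\,{\frak l}_{N,V}\,d\nu_S^{(N)}.
\]
For $\beta>0$, apply the entropy inequality $\int \Q g\,d\nu\le \int\Q\log\Q\,d\nu+\log\int e^{g}d\nu$ with $g=\beta{\frak l}_{N,V}$. This inequality follows from Jensen's inequality applied to $\log\int e^{g}\Q^{-1}\,\Q\,d\nu$, or equivalently from Young's inequality $ab\le a\log a-a+e^b$. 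It gives
\[
\beta\int\Q\,{\frak l}_{N,V}\,d\nu_S^{(N)}\le {\mathbf{H}}_N^{(V)}(t;f_N)+\log\int_{\Om_N}e^{\beta{\frak l}_{N,V}}\,d\nu_S^{(N)}.
\]
Integrability of $\Q\,{\frak l}_{N,V}$ is not an issue. ${\frak l}_{N,V}$ is linear in $u$, $\Q\nu_S^{(N)}$ is the law of a finite-dimensional diffusion with Gaussian-tailed moments, and the inequality holds even in the extended sense provided the positive part of $g$ is handled first, for instance by truncation and monotone convergence.

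Next, compute the log-Laplace transform. Under $\nu_S^{(N)}$, $u$ is a centered Gaussian vector on $\Om_N$, so ${\frak l}_{N,V}(u)$ is a centered real Gaussian with variance ${\frak v}_{N,V}$ as defined in \eqref{hVN}. Hence
\[
\log\int e^{\beta{\frak l}_{N,V}}\,d\nu_S^{(N)}=\frac{\beta^2}{2}{\frak v}_{N,V}.
\]
Dividing by $\beta$ yields \eqref{012104-26}.

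The only step needing care is identifying the variance in \eqref{hVN}, namely
\[
\int{\frak l}_{N,V}^2\,d\nu_S^{(N)}=\frac1N\sum_j (N\nabla V^{(\alpha_2)})(N\nabla V).
\]
Using covariance ${\cal C}_N=N\alpha_2$, we get ${\rm Var}=N^{-2}\,{\cal C}_N a\cdot a$ with $a_j=(N\nabla V)(j/N)$, so the variance equals $N^{-1}(\alpha_2\star a)\cdot a$. Since convolution commutes with $\nabla$, $\alpha_2\star a=N\nabla V^{(\alpha_2)}$, which is exactly the formula in \eqref{hVN}. The zero-mean constraint on $\Om_N$ does not matter, because $\sum_j a_j=0$. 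In any case the paper states \eqref{hVN} as the definition of this variance, so it can be taken as given.
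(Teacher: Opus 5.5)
Your proposal is correct and is essentially the paper's proof: the entropy inequality (which the paper cites from Kipnis--Landim) applied with $g=\beta{\frak l}_{N,V}$, followed by the exact Gaussian log-Laplace transform $\log\int e^{\beta{\frak l}_{N,V}}\,d\nu_S^{(N)}=\frac{\beta^2}{2}{\frak v}_{N,V}$. Your additional check of the variance formula \eqref{hVN}, via $\alpha_2\star(N\nabla V)=N\nabla V^{(\alpha_2)}$ and $\sum_j (N\nabla V)(j/N)=0$, is correct and fills in a computation that the paper only asserts.
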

\proof
By the entropy inequality, see e.g. \cite[p. 338]{KL}, we get that
for any $\beta>0$  
 \begin{equation}\label{071704-26}
  \begin{split}
&    \EE_{{\mu_N(0)}} {\frak
  l}_{N,V}\Big({u_{N,V} (t)} \Big) 
=\int_{\Om_N}  \Q^{(N)}_V(t,u;
 f_N) {\frak l}_{N,V}(u)\nu_{S}^{(N)} (du) \\
&
\le \frac{1}{\beta}\Big\{
{\mathbf{H}}_N^{(V)}(t;f_N)+\log\Big(\int_{\Om_N}\exp\left\{ 
 \beta {\frak l}_{N,V}(u)\right\}\nu_{S}^{(N)} (du)\Big) \Big\} \\
&
=
  \frac{1}{\beta}\Big\{
{\mathbf{H}}_N^{(V)}(t;f_N)+ 
\frac{\beta^2}{2}  {\frak v}_{N,V}  \Big\} .
\end{split}
\end{equation}
The last equality follows from Gaussianity of $\nu_{S}^{(N)} $.
\qed

\medskip

Combining the above lemma with Proposition~\ref{prop023103-26N}, we
conclude that
\[
 \frac{d {\mathbf{H}}_N^{(V)}(t;f_N) }{dt}\le ( -{\frak g}_N+\beta^{-1})
{\mathbf{H}}_N^{(V)}(t;f_N) +\frac{\beta}{2}   {\frak v}_{N,V}.
\]
Applying Gronwall's inequality, we have
%
 \begin{theorem}
    \label{thm081704}
Suppose that  $\beta{\frak
    g}_N>1$. Then,  
    \begin{align}
  \label{031304-26fz}
    {\mathbf{H}}_N^{(V)}(t;f_N ) 
    &\le {\mathbf{H}}_N^{(V)}(0;f_N )\exp\left\{-({\frak
    g}_N\beta-1)\frac{t}{\beta}\right\} \notag
      \\
      &+  \frac{ {\frak v}_{N,V} \beta^{{2}} }{2 ({\frak
    g}_N\beta-1)}   \Big[1-\exp\left\{-({\frak
    g}_N\beta-1)\frac{t}{\beta}\right\}\Big]
    \end{align}
    for any  $  N=1,2,\ldots$ and $t\ge0$. 
\end{theorem}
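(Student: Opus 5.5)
The plan is to apply a linear Gronwall argument to the differential inequality derived just before the statement. Write $h(t):={\mathbf{H}}_N^{(V)}(t;f_N)$ and set
\[
a:={\frak g}_N-\beta^{-1}=\frac{{\frak g}_N\beta-1}{\beta}>0,\qquad b:=\frac{\beta}{2}{\frak v}_{N,V}\ge0 .
\]
Positivity of $a$ is exactly the hypothesis $\beta{\frak g}_N>1$, and $b\ge0$ because ${\frak v}_{N,V}$ is the variance of ${\frak l}_{N,V}$ under $\nu_S^{(N)}$.

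Combining Proposition~\ref{prop023103-26N} (the log-Sobolev dissipation bound \eqref{entropy-estN}) with Lemma~\ref{lm012104-26} (the entropy inequality \eqref{012104-26}) gives, for all $t\ge0$,
\[
h'(t)\le -a\,h(t)+b .
\]
To integrate this, I multiply by $e^{at}$. This gives $\frac{d}{dt}\big(e^{at}h(t)\big)\le b\,e^{at}$. Integrating over $[0,t]$ yields
\[
h(t)\le h(0)e^{-at}+\frac{b}{a}\big(1-e^{-at}\big).
\]
Substituting $a$ and $b$, the constant is
\[
\frac{b}{a}=\frac{\beta{\frak v}_{N,V}}{2}\cdot\frac{\beta}{{\frak g}_N\beta-1}=\frac{{\frak v}_{N,V}\beta^2}{2({\frak g}_N\beta-1)},
\]
and the exponent is $at=({\frak g}_N\beta-1)t/\beta$. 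This is exactly \eqref{031304-26fz}.

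The one point that needs care is regularity: $t\mapsto h(t)$ must be absolutely continuous, and \eqref{e.1281aN} together with the integrations by parts must be justified. For the finite-dimensional SDE \eqref{burgers.dSVN} with a smooth initial density $f_N$, I will take this from the smoothness and decay of $\Q^{(N)}_V$, which follow from the hypoellipticity and positivity of the diffusion, since $\alpha_2$ is a positive definite matrix on $\Om_N$.

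If more care is wanted, I will work with the regularized entropy $\int \Q\log(\Q+\delta)\,d\nu_S^{(N)}$, or integrate the inequality in its integral form, and then let $\delta\to0$. This is the only mildly delicate step. The Gronwall step itself is routine.
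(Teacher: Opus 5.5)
Your argument is the paper's own proof. You combine \eqref{entropy-estN} with the entropy inequality \eqref{012104-26} to get $h'\le-(\mathfrak{g}_N-\beta^{-1})h+\frac{\beta}{2}\mathfrak{v}_{N,V}$ and apply Gronwall, your values of $b/a$ and of the exponent reproduce \eqref{031304-26fz}, and your regularity remarks go beyond what the paper writes without changing the route.

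One caveat you inherit from the paper rather than introduce. Gross's inequality, which is sharp on exponentials of linear functions, gives \eqref{043103-26} only with $\lambda_B/2$ in place of $\lambda_B$. So the dissipation rate actually justified upstream is $2\lambda_B=4N^2\sin^2(\pi/N)$ (consistent with \eqref{ei0}) rather than $\mathfrak{g}_N=8N^2\sin^2(\pi/N)$, and your Gronwall step goes through verbatim with that corrected rate.
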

  
  \subsection{The $L^p $ bound on the density}
\label{sec4.5}

{In this section, we derive an estimate on the $L^p-$norm of the density, using a similar strategy as the previous section, where we studied the relative entropy. The only difference is that, instead of the log Sobolev inequality, we resort to the spectral gap inequality associated with the underlying Gaussian measure.} 

  {For $p\ge 2$}, define
\begin{equation}
  \label{eq:7-1aNLp}
  \begin{split}
           \bar \Q_{p,N}(t ) &:= \int_{\Omega_N}  \Big[\Q^{(N)}_V(t,u;
 f_N)  \Big]^p
 \nu_{S}^{(N)} (du) \\
 &= \EE_{{\mu_N(0)}}    \Big[\Q^{(N)}_V(t, u_{N,V}(t) ;
 f_N)  \Big]^{p-1},\\
 \tilde \Q_{p,N}(t,u)&:=\big[ \Q^{(N)}_V(t,u;
 f_N)   \big]^{p}- \bar \Q_{p,N}(t) .
 \end{split}
\end{equation}
The  evolution of $ \bar \Q_{p,N}(t) $ satisfies
\begin{align*}
   & \frac{d \bar \Q_{p,N}(t) }{dt}= p\int_{\Om_N} \Big[\Q^{(N)}_V(t,u;
 f_N)  \Big]^{p-1}
    \frac{d}{dt} \Q^{(N)}_V(t,u;
 f_N)    
 \nu_{S}^{(N)} (du).
\end{align*}
Substituting, from the Fokker-Planck equation
$
\frac{d}{dt} \Q^{(N)}_V(t,u;
 f_N)=\Big( {\cal L}^{(\alpha)}_{N,V}\Big)^\star \Q^{(N)}_V(t,u;
 f_N)  ,
$
    we conclude
  \begin{equation}\label{e.1281aNL2}
    \begin{split}
       &
  \frac{d \bar \Q_{p,N}(t) }{dt} 
    =p\int_{\Om_N}  {\cal L}^{(\alpha)}_N   \big[\Q^{(N)}_V(t,u;
 f_N) \big]^{p-1}  \Q^{(N)}_V(t,u;
 f_N)   
     \nu_{S}^{(N)} (du) \\
   &
     +p\int_{\Om_N} \Q^{(N)}_V(t,u;
 f_N)   {\cal A}_{N,V}   \big[ \Q^{(N)}_V(t,u;
 f_N)   \big]^{p-1} 
   \nu_{S}^{(N)} (du).
\end{split}
  \end{equation}
For the second term on the right hand side of \eqref{e.1281aNL2}, we have
  \begin{equation}
    \label{022803-26NL2}
\begin{aligned}
   & \int_{\Om_N} \Q^{(N)}_V(t,u;
 f_N)   {\cal A}_{N,V}   \big[ \Q^{(N)}_V(t,u;
 f_N)   \big]^{p-1} 
   \nu_{S}^{(N)} (du)\\
 &
 = (p-1)  \int_{\Om_N} \big[ \Q^{(N)}_V(t,u;
 f_N)   \big]^{p-1} \Big[ \sum_{j\in\bT_N} (N \nabla V^{(\alpha_2)}) \big(\frac{j}{N}\big) \partial_{u_j}\Q^{(N)}_V(t,u;
 f_N)\Big] 
 \nu_{S}^{(N)} (du).
\end{aligned}
\end{equation}

\medskip

Recall that  ${\cal   L}^{(\alpha)}_N= {\cal   S}^{(\alpha)}_N+{\cal
  A}^{(\alpha)}_N$  is the generator of the unperturbed
 process
 $ 
   \Big(u_{N}\big(t,\frac{j}{N}\big)\Big)_{j\in\bT_N},
   $
that   satisfies \eqref{burgers.dSVN} with $V=0$.
  In consequence of \eqref{012703-26bN}
we have 
\begin{equation}
  \label{01303-26NL2}
  \begin{split}
    &
    \int_{\Om_N} {\cal
    A}^{(\alpha)}_N  \big[ \Q^{(N)}_V(t,u;
 f_N)   \big]^{p-1}  \Q^{(N)}_V(t;
 f_N) (u)\nu_{S}^{(N)} (du) \\
 &
{ =-\int_{\Om_N} {\cal
    A}^{(\alpha)}_N  \Q^{(N)}_V(t;
 f_N) (u) \big[ \Q^{(N)}_V(t,u;
 f_N)   \big]^{p-1} \nu_{S}^{(N)} (du)}   .
 \end{split}
\end{equation}
{Computing the left hand side  of \eqref{01303-26NL2}   using the form of $\mathcal{A}_N^{(\alpha)}$ in \eqref{L-alN}, the above term equals to 
\[
(p-1)\int_{\Om_N} {\cal
    A}^{(\alpha)}_N  \Q^{(N)}_V(t;
 f_N) (u) \big[ \Q^{(N)}_V(t,u;
 f_N)   \big]^{p-1} \nu_{S}^{(N)} (du).
\]}
Hence, the contribution from $\mathcal{A}_N^{(\alpha)}$ vanishes:
 \begin{equation}
  \label{01303-26NL20}
  \begin{split}
    &
    \int_{\Om_N} {\cal
    A}^{(\alpha)}_N  \big[ \Q^{(N)}_V(t,u;
 f_N)   \big]^{p-1}  \Q^{(N)}_V(t;
 f_N) (u)\nu_{S}^{(N)} (du) =0.
\end{split}
\end{equation}
\medskip

Define 
\begin{align*}
    &{\cal S}_{N,p}(t):=-p\int_{\Om_N}  {\cal S}^{(\alpha)}_N   \big[ \Q^{(N)}_V(t,u;
 f_N)   \big]^{p-1}   \Q^{(N)}_V(t,u;
 f_N)  
                 \nu_{S}^{(N)} (du) \\
  &
    =4(1-1/p)
  \int_{\Om_N}    (-\frac12{N^3}\Delta){\mathbf \alpha_2}D_u \tilde \Q_{p/2,{N}}(t,u) \cdot D_u \tilde \Q_{p/2,{N}}(t,u) \nu_{S}^{(N)} (du),
\end{align*}
where we used the expression of the Dirichlet form \eqref{011104-26} in the second step. 
Combining   \eqref{e.1281aNL2}, \eqref{022803-26NL2}
   together with \eqref{01303-26NL20}, we conclude the following identity
\begin{equation}\label{e.1282aNL2}
  \begin{split}
    \frac{d \bar \Q_{p,N}(t) }{dt}= &-{\cal S}_{N,p}(t)
+p(p-1) G_{N,p}(t)
\end{split}
\end{equation}
{with 
\begin{equation}\label{e.defFp}
\begin{aligned}
G_{N,p}(t):=\int_{\Om_N}& \big[ \Q^{(N)}_V(t,u;
 f_N)   \big]^{p-1}\\
 &\times
   \Big[ \sum_{j\in\bT_N} (N \nabla V^{(\alpha_2)}) \big(\frac{j}{N}\big) \partial_{u_j}\Q^{(N)}_V(t,u;
 f_N)\Big] 
 \nu_{S}^{(N)} (du).
 \end{aligned}
\end{equation}
As in \eqref{e.1282aN}, the first term on the right hand side of \eqref{e.1282aNL2} generates dissipation, while the second term represents the perturbative forcing. The next lemma deals with the forcing term through an integration by parts:} 
{\begin{lemma}\label{l.bdGNP}
We have 
\[
G_{N,p}(t)\leq p^{-1}\left(\frac{{\frak
      v}_{N,V,0}^{1/2} }{4{\frak g}_N^{1/2}(1-1/p) }  {\cal S}_{N,p}(t)+2 \bar \Q_{p/2,N}(t) \|\tilde \Q_{p/2,N}(t)\|_{L^2(\nu_{S}^{(N)} )}{\frak v}_{N,V}^{1/2}\right),
\]
with ${\frak
  v}_{N,V,0}$ given by
\begin{equation}
  \label{hVN0}
 {\frak v}_{N,V,0}:= \frac{1}{N} \sum_{j\in\bT_N}
      V^{(\al_2)}\Big(\frac{j}{N}\Big)
    V\Big(\frac{j}{N}\Big)   .
  \end{equation}
\end{lemma}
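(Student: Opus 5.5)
The plan is to rewrite $G_{N,p}(t)$ in terms of the function $\tilde \Q_{p/2,N}(t)$ that already appears in ${\cal S}_{N,p}(t)$. Write $\Q=\Q^{(N)}_V(t,\cdot;f_N)$ and $b_j:=(N\nabla V^{(\alpha_2)})(\frac jN)$. Since $\Q^{p-1}\partial_{u_j}\Q=\frac2p\,\Q^{p/2}\partial_{u_j}\Q^{p/2}$, and $\Q^{p/2}=\tilde \Q_{p/2,N}+\bar \Q_{p/2,N}$ with $\bar \Q_{p/2,N}$ constant in $u$, we have $D_u\Q^{p/2}=D_u\tilde\Q_{p/2,N}$. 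Hence
\[
G_{N,p}(t)=\frac2p\Big(\int_{\Om_N}\tilde \Q_{p/2,N}\, b\cdot D_u\tilde \Q_{p/2,N}\,\nu_S^{(N)}(du)+\bar \Q_{p/2,N}\int_{\Om_N} b\cdot D_u\tilde \Q_{p/2,N}\,\nu_S^{(N)}(du)\Big)=:\frac2p(I_1+I_2).
\]

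For $I_2$ I will reuse the Gaussian integration by parts from the proof of Lemma~\ref{l.l1aN}, which says $\int b\cdot D_u f\,d\nu_S^{(N)}=\int f\,{\frak l}_{N,V}\,d\nu_S^{(N)}$. Cauchy--Schwarz together with \eqref{hVN} then gives
\[
I_2\le \bar \Q_{p/2,N}\,\|\tilde \Q_{p/2,N}\|_{L^2(\nu_S^{(N)})}\,{\frak v}_{N,V}^{1/2}.
\]
After multiplying by $2/p$, this is exactly the second term of the claimed bound.

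For $I_1$ I will compare $b\cdot D_u f$ with the Dirichlet form. Let $M:=-\frac12 N^3\Delta\alpha_2=\frac12(-N^2\Delta)\,{\cal C}_N$. On mean-zero vectors $M$ is a positive convolution operator, diagonal in Fourier with symbol $\frac12\cdot 4N^2\sin^2(\pi k)\cdot N\hat\alpha^2(k)$, which is nonzero by the choice of $c_{N,\eps}$. Cauchy--Schwarz in the $M$-inner product gives $|b\cdot D_u f|\le \la M^{-1}b,b\ra^{1/2}\la MD_uf,D_uf\ra^{1/2}$. Since $b=N\nabla(\alpha_2\star V)$ and $\nabla^*\nabla=-\Delta$, a direct Fourier computation should give $\la M^{-1}b,b\ra=\frac2N\sum_j V^{(\alpha_2)}(\frac jN)V(\frac jN)=2{\frak v}_{N,V,0}$. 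Then
\[
|I_1|\le \|\tilde \Q_{p/2,N}\|_{L^2}\,(2{\frak v}_{N,V,0})^{1/2}\,{\cal D}^{1/2},\qquad {\cal D}:=\int \la MD_u\tilde\Q_{p/2,N},D_u\tilde\Q_{p/2,N}\ra\,d\nu_S^{(N)}.
\]
Next I apply the spectral gap inequality of Appendix~\ref{AppA} to the mean-zero function $\tilde \Q_{p/2,N}$. It is the Poincar\'e counterpart of the bound \eqref{011304-26} used in Proposition~\ref{prop023103-26N}, and it yields $\|\tilde \Q_{p/2,N}\|_{L^2}^2\le c\,{\frak g}_N^{-1}{\cal D}$. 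Hence $|I_1|\lesssim {\frak v}_{N,V,0}^{1/2}{\frak g}_N^{-1/2}{\cal D}$. Finally, substituting ${\cal D}={\cal S}_{N,p}(t)/(4(1-1/p))$ from the definition of ${\cal S}_{N,p}$ produces the first term of the claimed bound.

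The main obstacle is bookkeeping rather than ideas. Three things must be checked carefully: the exact value of $\la M^{-1}b,b\ra$, the gap constant relative to ${\frak g}_N=8N^2\sin^2(\pi/N)$, and all factors of $2$. These have to combine to the stated coefficient $\frac{{\frak v}_{N,V,0}^{1/2}}{4{\frak g}_N^{1/2}(1-1/p)}$. If the constants come out slightly off, I will revisit the normalization of the spectral gap, namely the factor $\frac12$ in $M$ versus $2N^2\sin^2$. I also need to confirm that restricting everything to $\Om_N$ (mean-zero vectors) makes $M$ invertible on the relevant subspace.
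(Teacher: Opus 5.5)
\textbf{Verdict: same method as the paper, but the constant $\tfrac14$ cannot be reached, and the stated inequality is in fact false.}

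Your decomposition is the paper's in a different form. Since $\Q^{p-1}\partial_{u_j}\Q=\frac2p\Q^{p/2}\partial_{u_j}\Q^{p/2}$, you get $\frac2pI_1=\frac1p\mathrm{I}_N$ and $\frac2pI_2=\frac1p\mathrm{II}_N$. Your Cauchy--Schwarz in the $M$-metric is exactly what Proposition~\ref{prop021704-26} does after the paper integrates by parts to $\int\Q^p\mathfrak{l}_{N,V}\,d\nu_S^{(N)}$. Your bound on $I_2$ is correct.

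The gap is the bookkeeping you deferred: it does not produce the factor $\frac14$, and no choice of normalization will. Two inputs determine the constant:
\begin{itemize}
\item In general $\la M^{-1}b,b\ra\le 2\mathfrak{v}_{N,V,0}$, with equality iff $\sum_jV(j/N)=0$.
\item The Poincar\'e inequality \eqref{021105-26} has $\lambda_B=2N^2\sin^2(\pi/N)=\mathfrak{g}_N/4$, so it gives $\|\tilde\Q_{p/2,N}\|^2_{L^2(\nu_S^{(N)})}\le 4\mathfrak{g}_N^{-1}\mathcal{D}$. This is sharp: linear functionals of the lowest Fourier mode attain it.
\end{itemize}
Your chain therefore ends at
\[
\frac2p|I_1|\le \frac1p\cdot\frac{\sqrt2\,\mathfrak{v}_{N,V,0}^{1/2}}{\mathfrak{g}_N^{1/2}(1-1/p)}\,\mathcal{S}_{N,p}(t),
\]
which is $4\sqrt2$ times the claimed coefficient. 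The paper reaches $\frac14$ through two slips:
\begin{itemize}
\item It asserts $[4(1-1/p)]^{-1}\mathcal{S}_{N,p}(t)\ge 2\mathfrak{g}_N\|\tilde\Q_{p/2,N}(t)\|^2_{L^2(\nu_S^{(N)})}$, a gap eight times larger than \eqref{021105-26} permits.
\item It then drops a further factor $2$ when combining this with \eqref{011704-26z} to obtain \eqref{041205-26}.
\end{itemize}

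Indeed, the stated inequality is false. Take $p=2$, $t=0$ and $V(x)=c\sqrt2\cos(2\pi x)$ with $c\ne0$. Write $\mathfrak{l}_{N,V}(u)=\mathbf{a}\cdot u$ with $\mathbf{a}_j=\nabla V(j/N)$, so that $b=\mathcal{C}_N\mathbf{a}$. Let $X=\mathfrak{l}_{N,V}/\mathfrak{v}_{N,V}^{1/2}$, which is standard Gaussian under $\nu_S^{(N)}$, and take $f_N=\frac12(1+X)^2$.

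Then $b\cdot D_uX=\mathfrak{v}_{N,V}^{1/2}$. Because $V$ lies in the lowest Fourier mode, $\mathfrak{v}_{N,V}=\frac12\mathfrak{g}_N\mathfrak{v}_{N,V,0}$, and
\[
G_{N,2}=\tfrac12\mathfrak{v}_{N,V}^{1/2}\int(1+X)^3\,d\nu_S^{(N)}=2\mathfrak{v}_{N,V}^{1/2},\qquad \mathcal{S}_{N,2}=\mathfrak{g}_N,\qquad \bar\Q_{1,N}=1,\qquad \|\tilde\Q_{1,N}\|^2_{L^2(\nu_S^{(N)})}=\tfrac32 .
\]
The claimed right-hand side is therefore $\frac12\big(2^{-1/2}+\sqrt6\big)\mathfrak{v}_{N,V}^{1/2}\approx1.58\,\mathfrak{v}_{N,V}^{1/2}$. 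This is strictly below $G_{N,2}=2\mathfrak{v}_{N,V}^{1/2}$ for every $N$ and $\eps$.

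What your argument (and the paper's, once corrected) actually proves is the inequality with $\sqrt2$ in place of $\frac14$. That corrected version is still enough downstream. It gives $\delta(N,p)=1-\sqrt2\,p\,\mathfrak{v}_{N,V,0}^{1/2}\mathfrak{g}_N^{-1/2}$, which stays positive for large $N$ under $\|V\|_{L^2(\bT)}<1/(100p)$.
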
}
\begin{proof}
Integrating by parts, we obtain, cf \eqref{lN},  
\begin{align*}
  G_{N,p}(t)
   &=-(p-1) \sum_{j\in\bT_N} \int_{\Om_N} \partial_{u_j}   \Q^{(N)}_V(t,u;
 f_N)   \big[ \Q^{(N)}_V(t,u;
   f_N)   \big]^{p-2} \\
  &
    \times\Big[  (N \nabla V^{(\alpha_2)})
    \big(\frac{j}{N}\big) \Q^{(N)}_V(t,u;
 f_N)\Big] 
    \nu_{S}^{(N)} (du) \\
  &
    + \int_{\Om_N} \big[ \Q^{(N)}_V(t,u;
 f_N)\Big]^p    {\frak l}_{N,V}(u)     
    \nu_{S}^{(N)} (du) .
\end{align*}
{The first term on the right hand side is precisely $-(p-1)G_{N,p}(t)$,} hence 
\begin{align}
  \label{031205-26}
   & p\, G_{N,p}(t)  
   = \sum_{j\in\bT_N} \int_{\Om_N} \big[ \Q^{(N)}_V(t,u;
 f_N)\Big]^p    {\frak l}_{N,V}(u)
    \nu_{S}^{(N)} (du)
    ={\rm I}_N+{\rm II}_N,\quad\mbox{where}
  \\
   &
  {\rm I}_N:=     \int_{\Om_N} \tilde \Q_{p/2,N}^2(t,u)   {\frak l}_{N,V}(u)
    \nu_{S}^{(N)} (du) ,\notag\\
  &
    {\rm II}_N :=2 \bar \Q_{p/2,N}(t) \int_{\Om_N} \tilde \Q_{p/2,N}(t,u) {\frak l}_{N,V}(u)
    \nu_{S}^{(N)} (du). \notag
\end{align} 
{To obtain the above decomposition, we also used the fact that $\int_{\Om_N}   {\frak l}_{N,V}(u)
    \nu_{S}^{(N)} (du)=0$. We deal with ${\rm I}_N$ and ${\rm II}_N$ separately.}

  {For ${\rm I}_N$, the idea is to perform a Gaussian integration by parts using the factor ${\frak l}_{N,V}(u)$. Indeed,}  
 applying Proposition \ref{prop021704-26} below, we estimate, cf.  \eqref{CNj}, 
\begin{align}
  \label{011704-26z}
  &
     {\rm I}_N\le \sqrt{2}   \Big[{4} \sum_{j\in\bT^N}
    (-N^2\Delta)^{-1}{\cal C}_N  ({-}\Delta) V\Big(\frac{j}{N}\Big)
    V\Big(\frac{j}{N}\Big)\Big]^{1/2}      
    \|\tilde \Q_{p/2,N}(t)\|_{L^2(\nu_{S}^{(N)} )}  
      \Big\{[4(1-1/p)]^{-1}{\cal S}_{N,p}(t)\Big\}^{1/2}\\
  &
    = \Big(\frac{1-1/p}{2}\Big)^{-1/2}{\frak v}_{N,V,0}^{1/2}\|\tilde \Q_{p/2,N}(t)\|_{L^2(\nu_{S}^{(N)} )}  {{\cal S}_{N,p}^{1/2}(t)}.\notag
\end{align}
Thanks to  the spectral gap estimate \eqref{021105-26} (see also
\eqref{entropy-estN}), we get
\begin{align}
  \label{011304-26L^2}
 &  
   \big[4(1-1/p)\big]^{-1}{\cal S}_{N,p} (t)   \ge 2{\frak g}_N \|\tilde \Q_{p/2,N}(t)\|^2_{L^2(\nu_{S}^{(N)} )} .
\end{align}
Hence,
\begin{equation}
  \label{041205-26}
 {\rm I}_N\le \frac{{\frak
      v}_{N,V,0}^{1/2} }{4{\frak g}_N^{1/2}(1-1/p) }  {\cal S}_{N,p}(t).
\end{equation}

Concerning ${\rm II}_N$ we use the Cauchy-Schwarz inequality and derive
(see \eqref{hVN})
\begin{equation}
  \label{021405-26}
 {\rm II}_N\le 2 \bar \Q_{p/2,N}(t) \|\tilde \Q_{p/2,N}(t)\|_{L^2(\nu_{S}^{(N)} )}{\frak v}_{N,V}^{1/2},
\end{equation}
which completes the proof.
\end{proof}
%

Applying Lemma~\ref{l.bdGNP} to \eqref{e.1282aNL2}, we have shown therefore the following.
\begin{proposition}
  \label{prop011305-26}
  Suppose that $p\in[2,+\infty)$. Then,   
\begin{equation}\label{011305-26}
  \begin{split}
    \frac{d \bar \Q_{p,N}(t) }{dt}&\le  -\delta(N,p){\cal S}_{N,p}(t)\\
    &+2(p-1) \bar \Q_{p/2,N}(t) \|\tilde \Q_{p/2,N}(t)\|_{L^2(\nu_{S}^{(N)} )}{\frak v}_{N,V}^{1/2}, 
    \end{split}
    \end{equation} where 
\[
\delta(N,p):=1-\frac{p}{{4}{\frak g}_N^{1/2} } {\frak
      v}_{N,V,0}^{1/2} .
\]
\end{proposition}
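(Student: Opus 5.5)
The plan is to substitute the bound of Lemma~\ref{l.bdGNP} directly into the identity \eqref{e.1282aNL2} and collect terms; essentially nothing beyond this bookkeeping is needed. The inputs are that identity, the lemma, and the sign ${\cal S}_{N,p}(t)\ge0$, which holds because of the Dirichlet form representation \eqref{011104-26} and the nonnegativity of $-\Delta$ combined with the positive definiteness of $\alpha_2$.

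\textbf{Step 1: substitute.} Starting from
\[
\frac{d \bar \Q_{p,N}(t)}{dt}=-{\cal S}_{N,p}(t)+p(p-1)G_{N,p}(t),
\]
I multiply the bound of Lemma~\ref{l.bdGNP} by $p(p-1)\ge0$. This requires $p\ge 2$ (indeed $p>1$ suffices) so that the inequality direction is preserved. It gives
\[
p(p-1)G_{N,p}(t)\le \frac{(p-1){\frak v}_{N,V,0}^{1/2}}{4{\frak g}_N^{1/2}(1-1/p)}{\cal S}_{N,p}(t)+2(p-1)\bar \Q_{p/2,N}(t)\|\tilde \Q_{p/2,N}(t)\|_{L^2(\nu_{S}^{(N)})}{\frak v}_{N,V}^{1/2}.
\]

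\textbf{Step 2: simplify the coefficient.} Since $(p-1)/(1-1/p)=p$, the coefficient in front of ${\cal S}_{N,p}$ equals $\frac{p}{4{\frak g}_N^{1/2}}{\frak v}_{N,V,0}^{1/2}$. Adding the term $-{\cal S}_{N,p}(t)$, the total coefficient of ${\cal S}_{N,p}(t)$ becomes $-\delta(N,p)$. The remaining term is exactly the forcing term in \eqref{011305-26}.

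\textbf{Step 3: check the ingredients.} Three points need confirming.
\begin{itemize}
\item ${\frak v}_{N,V,0}\ge0$, so that the square root makes sense. This follows because $V^{(\alpha_2)}=\alpha_2\star V$ with $\alpha_2$ a positive-definite kernel, so ${\frak v}_{N,V,0}$ is a nonnegative quadratic form in $V$.
\item ${\frak g}_N>0$, which holds for $N\ge2$.
\item The integrations by parts hidden in Lemma~\ref{l.bdGNP} and in \eqref{01303-26NL20} are legitimate. They need enough smoothness and decay of $\Q^{(N)}_V(t,\cdot;f_N)$ relative to the Gaussian weight. I would take this from the smooth initial density $f_N$ and standard parabolic regularity of the finite-dimensional Fokker--Planck equation \eqref{FPN}, which has a nondegenerate diffusion on $\Om_N$ because $\alpha_2$ is positive definite.
\end{itemize}

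The only potential obstacle is the third point in Step 3, namely justifying the integrations by parts for this finite-dimensional but non-compact diffusion. I would handle it by a cutoff or approximation of $f_N$ by bounded smooth densities, followed by a limiting argument. The algebra itself is immediate.
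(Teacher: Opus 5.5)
Your proposal is correct and is the paper's argument: the paper obtains \eqref{011305-26} by inserting the bound of Lemma~\ref{l.bdGNP}, multiplied by $p(p-1)$, into the identity \eqref{e.1282aNL2} and using $(p-1)/(1-1/p)=p$. The nonnegativity of $\mathcal{S}_{N,p}(t)$ that you list as an input is not needed at this step, since the two coefficients of $\mathcal{S}_{N,p}(t)$ are simply added.
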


Using the above result and an induction argument, we conclude the following

\begin{theorem}\label{t.densityLp}
 {Fix any $p\geq1$. Suppose that $V\in H^1(\bT)$ and 
\begin{equation}
  \label{sNaNxz}
  \begin{split}
    & \|V\|_{L^2(\bT)}<\frac{1}{100p},\\
    &
    {\frak Q}(p):= \sup_{N\ge1}\bar \Q_{p,N}(0)<+\infty.
\end{split}\end{equation}
Then there exist $N_0>0$, $\lambda(p)>0$, and   ${\frak C}_{*,j}>0$, $j=1,2$
depending only on $p$, ${\frak Q}(p)$,  $\|V\|_{H^1(\bT)}$, such that 
\begin{equation}
  \label{051704-26z}
\bar \Q_{p,N}(t)\le {\frak C}_{*,1}\bar \Q_{p,N}(0)
\exp\left\{-\lambda(p)t\right\}+{\frak C}_{*,2}
,\quad t\ge0,\,N\ge N_0.
\end{equation}}
\end{theorem}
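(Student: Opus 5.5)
We will derive from Proposition~\ref{prop011305-26} a closed differential inequality for $\bar\Q_{p,N}$, with $\bar\Q_{p/2,N}$ treated as a known forcing. We then induct over $p$ along the dyadic scale $p,p/2,p/4,\dots$ down to an exponent in $[1,2)$, where the quantities are trivially bounded.

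\textbf{Base case.} For $q\in[1,2]$, Jensen's inequality (recall $\int\Q^{(N)}_V d\nu_S^{(N)}=1$) and Hölder interpolation between $q=1$ and $q=2$ reduce control of $\bar\Q_{q,N}$ to control of $\bar\Q_{1,N}\equiv1$ and $\bar\Q_{2,N}$. So we first treat $p=2$ directly. Here $\bar\Q_{1,N}=1$ and $\|\tilde\Q_{1,N}\|_{L^2}^2=\bar\Q_{2,N}-1$.

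\textbf{Uniform constants.} We check that the constants are uniform in $N$. As $N\to\infty$ we have ${\frak g}_N=8N^2\sin^2(\pi/N)\to 8\pi^2$, so ${\frak g}_N\ge 8\pi^2\cdot\frac12$ for $N\ge N_0$. Moreover ${\frak v}_{N,V,0}\to\int (\eta_2^{(\eps)}\star V)V\le\|V\|_{L^2}^2$, since $\hat\eta_2^{(\eps)}\in[0,1]$; hence ${\frak v}_{N,V,0}\le 2\|V\|^2_{L^2}$ for $N$ large. Likewise ${\frak v}_{N,V}\le 2\|\nabla V\|_{L^2}^2\le 2\|V\|_{H^1}^2$; this is where $V\in H^1$ enters, by Riemann-sum convergence for the smoothly mollified $V$. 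With $\|V\|_{L^2}<1/(100p)$ we get $\delta(N,p)\ge 1-\frac{p\sqrt2\|V\|_{L^2}}{4\cdot 2\pi}\ge\frac12$.

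\textbf{The differential inequality.} The spectral gap \eqref{011304-26L^2} gives
\[
{\cal S}_{N,p}\ge 8(1-1/p){\frak g}_N\|\tilde\Q_{p/2,N}\|^2_{L^2}=: c_p\|\tilde\Q_{p/2,N}\|^2_{L^2},
\]
with $c_p\ge c>0$ for $p\ge2$. Writing $X=\|\tilde\Q_{p/2,N}\|_{L^2}$ and applying Young's inequality to the forcing term, we obtain
\[
\frac{d\bar\Q_{p,N}}{dt}\le-\tfrac{c}{2}X^2+2(p-1)\bar\Q_{p/2,N}X{\frak v}^{1/2}_{N,V}\le -\tfrac c4X^2+C_p\|V\|_{H^1}^2\bar\Q_{p/2,N}^2 .
\]
Since $X^2=\bar\Q_{p,N}-\bar\Q_{p/2,N}^2$, this becomes
\[
\frac{d\bar\Q_{p,N}}{dt}\le -\tfrac c4\bar\Q_{p,N}+C'_p\,\bar\Q_{p/2,N}(t)^2 .
\]
Gronwall's inequality then gives $\bar\Q_{p,N}(t)\le e^{-ct/4}\bar\Q_{p,N}(0)+C'_p\int_0^te^{-c(t-s)/4}\bar\Q^2_{p/2,N}(s)\,ds$.

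\textbf{Induction.} For $p=2$ the forcing is $\bar\Q_{1,N}^2=1$, which yields \eqref{051704-26z}. For general $p$, assume \eqref{051704-26z} holds at exponent $p/2$. The smallness condition $\|V\|_{L^2}<1/(100p)$ is monotone, so it also holds at $p/2$. For the initial data we use $\bar\Q_{p/2,N}(0)\le\bar\Q_{p,N}(0)^{1/2}$ (Jensen), hence ${\frak Q}(p/2)\le{\frak Q}(p)^{1/2}$. Squaring the bound at $p/2$ gives $\bar\Q_{p/2,N}(s)^2\le 2{\frak C}^2_{*,1}\bar\Q_{p,N}(0)e^{-2\lambda(p/2)s}+2{\frak C}^2_{*,2}$. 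Inserting this into the Gronwall formula produces $\bar\Q_{p,N}(0)e^{-\lambda(p)t}$ with $\lambda(p)<\min(c/4,2\lambda(p/2))$, plus a constant. Non-dyadic $p$ are handled by starting the chain at $p2^{-k}\in[1,2)$ and using the base-case interpolation there. A cleaner alternative is to start the chain at $p2^{-k}\in(2,4]$ and, for the lowest rung, bound $\bar\Q_{q/2,N}$ with $q/2\in(1,2]$ by interpolating $\bar\Q_{1,N}$ and $\bar\Q_{2,N}$.

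\textbf{Main obstacle.} The main difficulty is the uniform-in-$N$ control of ${\frak g}_N$, ${\frak v}_{N,V,0}$ and ${\frak v}_{N,V}$: bounding the discrete sums by their continuum counterparts despite the $N^{-100}$ correction in $\alpha$, and using positivity of $\hat\alpha_2$ to get ${\frak v}_{N,V,0}\lesssim\|V\|_{L^2}^2$. The other delicate point is keeping the constant in $\delta(N,p)>0$ explicit enough that $1/(100p)$ suffices.
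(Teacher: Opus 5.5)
Your proposal is correct and follows essentially the same route as the paper's proof. Both use dyadic induction on $p$ starting from $\bar\Q_{1,N}\equiv1$, together with Proposition~\ref{prop011305-26}, the spectral gap, Young's inequality, the identity $\bar\Q_{p,N}=\|\tilde\Q_{p/2,N}\|^2_{L^2(\nu_S^{(N)})}+\bar\Q_{p/2,N}^2$ and Gronwall, with H\"older interpolation for non-dyadic $p$. You insert the full time-dependent bound on $\bar\Q_{p/2,N}$ into the Duhamel formula, whereas the paper bounds it by a time-independent constant; this is only a cosmetic refinement.

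One small caveat, which the paper's one-line treatment of general $p$ shares, concerns $p\in(1,2)$. There your reduction to $\bar\Q_{2,N}$ tacitly needs $\sup_N\bar\Q_{2,N}(0)<\infty$, and this does not follow from $\mathfrak{Q}(p)<\infty$. For $p\ge2$ it does follow, since $\bar\Q_{2,N}(0)\le\bar\Q_{p,N}(0)^{2/p}$. The cleanest fix for $p\in(1,2)$ is to run the same differential inequality directly at that $p$, where $\bar\Q_{p/2,N}\le1$ by Jensen.
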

\proof 
    {The idea of the proof is straightforward: we make use of  \eqref{011305-26} and proceed by induction on $p=2^n$, $n=1,2,\ldots$. First, we apply the spectral gap inequality to the dissipative term on the right-hand side of \eqref{011305-26}. For the forcing term, we use Young’s inequality, so that the factor $\|\tilde \Q_{p/2,N}(t)\|_{L^2(\nu_S^{(N)})}$ can be absorbed by the dissipation, while the remaining factor is controlled by the induction hypothesis. The proof is then concluded by applying Gronwall’s inequality.}

  {First, note that by choosing $\|V\|_{L^2(\bT)}$ small, the constant ${\frak
      v}_{N,V,0}$ (that approximates $\|V\|_{L^2(\bT)}$ for large $N$)
    defined in \eqref{hVN0} can be also made small, and since $ {\frak g}_N\to 8\pi^2$ as $N\to\infty$, the dissipation rate $\delta(N,p)$ in \eqref{011305-26} can be made strictly positive. This is the role of the first condition in \eqref{sNaNxz}.  Throughout the proof,   the constant $C>0$ may change from line to line.}

In the
case $n=1$ we have
\begin{equation}\label{e.5319}
  \begin{split}
&\bar \Q_{1,N}(t)\equiv 1,\qquad\|\tilde \Q_{1,N}(t)\|_{L^2(\nu_{S}^{(N)} )}\le \bar
\Q_{2,N}^{1/2}(t), \quad\mbox{and}\\
&
\bar \Q_{2,N}(t)=\|\tilde \Q_{1,N}(t)\|^2_{L^2(\nu_{S}^{(N)} )}+1,\quad t\ge0.
\end{split}
\end{equation}
Using \eqref{011305-26}
for
$p=2$, spectral gap estimate \eqref{011304-26L^2} and then Young's inequality, we obtain
\begin{equation}\label{011305-26-1}
  \begin{split}
&    \frac{d \bar \Q_{2,N}(t) }{dt}=\frac{d   }{dt}\|\tilde
\Q_{1,N}(t)\|^2_{L^2(\nu_{S}^{(N)} )}\\
&
\le  -{C}\|\tilde
\Q_{1,N}(t)\|^2_{L^2(\nu_{S}^{(N)} )}+2  \|\tilde \Q_{1,N}(t)\|_{L^2(\nu_{S}^{(N)} )}{\frak v}_{N,V}^{1/2}
 \\
&
\le { -(C-\beta^{-1})\|\tilde
\Q_{1,N}(t)\|^2_{L^2(\nu_{S}^{(N)} )}+\beta{\frak
  v}_{N,V}},\quad \beta>0.
\end{split}
\end{equation}
Here $C$ is some universal constant. 
Choose $\beta=2/C$, plug in the last equation in \eqref{e.5319}, and apply the
Gronwall's inequality, we conclude that
\eqref{051704-26z} holds for $p=2$ with $\lambda(2)=C/2$.

{Suppose now that  we have shown \eqref{051704-26z} for $p=2^n$ for some
$n\ge1$.  For $p=2^{n+1}$, recall the differential inequality \eqref{011305-26}
\begin{equation}\label{e.5318}
  \begin{split}
    \frac{d \bar \Q_{p,N}(t) }{dt}&\le  -\delta(N,p){\cal S}_{N,p}(t)\\
    &+2(p-1) \bar \Q_{p/2,N}(t) \|\tilde \Q_{p/2,N}(t)\|_{L^2(\nu_{S}^{(N)} )}{\frak v}_{N,V}^{1/2}, 
    \end{split}
    \end{equation}
For the second term, by the induction condition we bound $2(p-1) \bar \Q_{p/2,N}(t) {\frak v}_{N,V}^{1/2} \leq C$ with some constant $C$ independent of $t$, and apply Young's inequality again,   the forcing term in \eqref{e.5318} is bounded as 
\[
2(p-1) \bar \Q_{p/2,N}(t) \|\tilde \Q_{p/2,N}(t)\|_{L^2(\nu_{S}^{(N)} )}{\frak v}_{N,V}^{1/2} \leq \beta^{-1}\|\tilde \Q_{p/2,N}(t)\|_{L^2(\nu_{S}^{(N)} )}^2+ C^2\beta.
\]
Applying spectral gap inequality  
\[
\delta(N,p){\cal S}_{N,p}(t)\geq 2\lambda(p)\|\tilde \Q_{p/2,N}(t)\|_{L^2(\nu_{S}^{(N)} )}^2,
\]
where $\lambda(p)>0$. Combining the above estimates and choosing $\beta$ large, we obtain
\[
\frac{d \bar \Q_{p,N}(t) }{dt}\leq - \lambda(p)\|\tilde \Q_{p/2,N}(t)\|_{L^2(\nu_{S}^{(N)} )}^2+ C.
\]
Further note that $\bar \Q_{p,N}(t)=\|\tilde \Q_{p/2,N}(t)\|_{L^2(\nu_{S}^{(N)} )}^2+|\bar \Q_{p/2,N}(t)|^2$, 
which implies 
\[
\frac{d \bar \Q_{p,N}(t) }{dt}\leq - \lambda(p)\bar \Q_{p,N}(t)+ C.
\]
Applying Gronwall's inequality again completes the proof for those $p=2^{n+1}$. The general cases of $p\geq1$ can be derived with an application of H\"older's inequality.}
\qed

\section{Funaki-Quastel Regularization}

\label{Reg}

{In this section, we study a type of specific regularization of the
  stochastic Burgers equation which was used in \cite{FQ}. The main
  goal is to pass to the limit of the discretization
  \eqref{burgers.dSVN} and derive a continuous stochastic PDE. The
  limiting stochastic PDE serves as a mollification of the singular stochastic Burgers equation. By applying the result from \cite{FQ}, we further prove the joint convergence of the solution and the driving noise as one removes the regularization.
}

\subsection{Solution of the   regularized
  stochastic Burgers equation}

Consider the $C^\infty$-smooth spatial regularization of the Wiener process
\begin{equation}
  \label{Weps}
W^{(\eps)}(t,x):=\eta^{(\eps)} \star W(t,x).
\end{equation}
where $\eta^{(\eps)}$ is the approximation of identity defined in \eqref{040405-26}. With $\eta_2^{(\eps)}=\eta^{(\eps)}\star \eta^{(\eps)}$, the covariance function of $W^{(\eps)}$ equals
\begin{equation}
  \label{weps}
  \begin{split}
&\EE \big[W^{(\eps)}(t,x)W^{(\eps)}(s,y)\big]=(t\wedge s)
R_\eps(x-y) ,\quad\mbox{where}\\
&
R_\eps(x-y)=\eta^{(\eps)} _2(x-y),\quad t,s\ge0,\quad  x,y\in \bT.
\end{split}
\end{equation}
Recall that $\xi(x)
    = \sum_{n=1}^{+\infty} \big(\xi^{(n,c)}{\rm
    c}_n(x)+\xi^{(n,s)}{\rm s}_n(x)\big)$ has the law of a white noise on $L^2(\bT)$ without zero mode, see \eqref{inv-meas1}. 
Let  $\nu_\eps$ be  the law of 
\begin{align}
  \label{inv-meas}
  &
    \xi_\eps(x)
    = \sum_{n=1}^{+\infty}\hat\eta^{(\eps)} (n)\big(\xi^{(n,c)}{\rm
    c}_n(x)+\xi^{(n,s)}{\rm s}_n(x)\big) .\notag
\end{align}
where $\xi^{(n,\iota)}$, $n=1,2,\ldots$, $\iota\in\{c,s\}$ are
i.i.d. standard (real valued) Gaussians. It is straightforward to check that $\xi_\eps=\eta^{(\eps)}\star \xi$.
The
measure $\nu_\eps$ is  supported on $C^\infty(\bT)$ for each $\eps>0$.

Recall that $V$ satisfies \eqref{V00}. 
We introduce, after \cite{FQ}, the regularized perturbed stochastic
Burgers equation (RSBE) of  the form
\begin{equation}
  \label{eq:SBEV}
  \begin{split}
&d u^{(\eps)}_V(t,x)=\Big[\tfrac12\Delta u^{(\eps)}_V
(t,x)+b^{(\eps)}[u_V^{(\eps)}](x) +\nabla V^{(\eta_2^{\eps})}(x)\Big]dt+\nabla dW^{(\eps)} (t,x),
\end{split}
\end{equation}
with the   notation, see \eqref{040405-26},
\begin{equation}
  \label{030904-26}
b^{(\eps)}[u_V^{(\eps)}](x):=\tfrac12\nabla\Big( |u_V^{(\eps)}|^2\star
\eta_2^{(\eps)}(t,x)\Big),\quad  V^{(\eta_2^{\eps})}(x):=V\star
\eta_2^{(\eps)} (x) .
\end{equation}
In the particular case $V=0$ we shall write $u^{(\eps)}(t)$ instead of
$u^{(\eps)}_0(t)$.
Suppose that the initial data $u\in L^2(\bT)$.
\begin{theorem}
  \label{thm010904-26}
  Suppose   $V $satisfies \eqref{V00}.  
  Then,   there exists a unique, up to
   indistinguishability,  $C(\bbT)$-valued
$\big({\cal W}_t\big)$-predictable  process $\big(u^{(\eps)}_V(t;u)\big)_{t\ge0}$ which  satisfies \eqref{eq:SBEV}
  in both the
  weak and mild sense, i.e. for any $\varphi\in C^2(\bT)$, 
  \begin{equation}
  \label{SBEEV1}
  \begin{split}
&\la u^{(\eps)}_V(t;u), \varphi\ra =\la u , \varphi\ra+ \int_0^t\Big[\tfrac12\la u^{(\eps)}_V (s;u), \Delta
\varphi\ra\\
& -\tfrac12\la   \big(u^{(\eps)}_V\big)^2 (\cdot;u)\star
\eta_2^{(\eps)}(s )  , \nabla\varphi\ra 
-\la V^{(\eta_2^{\eps})}  , \nabla
\varphi\ra\Big]ds
- \la W^{(\eps)}(t), \nabla\varphi\ra  
\end{split}
\end{equation}
and
\begin{equation}
  \label{SBEmildV}
  \begin{split}
&    u^{(\eps)}_V (t;u) = e^{1/2\Delta t}u   + \tfrac12\int_0^t
e^{1/2\Delta (t-s)}  \big((u^{(\eps)}_V)^2(s;u)      \star
 \nabla\eta_2^{(\eps)}\big)(s)   ds \\
 &+  \int_0^t
e^{1/2\Delta s} \nabla V^{(\eta_2^{\eps})}      ds 
+\int_0^t \nabla
e^{1/2\Delta (t-s)}d  W^{(\eps)} (s) 
\end{split}
\end{equation}
for $\nu_\eps$ a.s. $u$ and almost every realization of $\big(W^{(\eps)}
(t)\big)_{t\ge0}$.

In addition, when $V=0$, for each $t\geq0$, the law of each $u^{(\eps)}_V (t;u) $ in $C(\bT)$, under
$ \PP_{\nu_\eps}$ coincides with $
\nu_\eps$.
\end{theorem}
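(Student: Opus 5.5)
The plan is to reduce \eqref{eq:SBEV} to the unperturbed case treated in \cite{FQ} and then control the extra smooth drift $\nabla V^{(\eta_2^{\eps})}$ by a standard fixed-point argument. For fixed $\eps>0$ the nonlinearity $b^{(\eps)}[u]=\tfrac12 (u^2)\star\nabla\eta_2^{(\eps)}$ is smoothing. Indeed, for $u\in L^2(\bT)$ we have $\|b^{(\eps)}[u]\|_{C^k}\le C_{\eps,k}\|u\|_{L^2}^2$. The map is also locally Lipschitz from $L^2$ into every $C^k$:
\[
\|b^{(\eps)}[u]-b^{(\eps)}[v]\|_{C^k}\le C_{\eps,k}\big(\|u\|_{L^2}+\|v\|_{L^2}\big)\|u-v\|_{L^2}.
\]
Both the stochastic convolution $\Psi(t)=\int_0^t\nabla e^{\frac12\Delta(t-s)}dW^{(\eps)}(s)$ and the deterministic term $\int_0^t e^{\frac12\Delta s}\nabla V^{(\eta_2^{\eps})}ds$ have continuous (indeed smooth) versions in $C(\bT)$, since $W^{(\eps)}$ is spatially smooth. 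Writing $u=v+\Psi+\Phi_V$, with $\Phi_V$ the deterministic term, the remainder $v$ solves a random PDE
\[
\partial_t v=\tfrac12\Delta v+b^{(\eps)}[v+\Psi+\Phi_V],\qquad v(0)=u\in L^2(\bT).
\]
Applying a Picard iteration in $C([0,T];L^2(\bT))$ to the mild form gives a unique local solution. That solution is continuous for $t>0$ and predictable, because the construction is pathwise and adapted.

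The main obstacle is global existence, i.e. ruling out blow-up of $\|u(t)\|_{L^2}$. I would first derive an a priori energy estimate for $v$. The key identity is the cancellation
\[
\la b^{(\eps)}[w],w\ra=-\tfrac12\la w^2\star\eta_2^{(\eps)},\nabla w\ra=-\tfrac12\la w^2,\nabla(w\star\eta_2^{(\eps)})\ra,
\]
which vanishes only when $\eta_2^{(\eps)}=\delta$. So instead I would follow \cite{FQ} and work with $u$ itself via It\^o's formula for $\|u\|_{L^2}^2$. Since the cubic term does not cancel there either, a cruder pathwise argument is likely needed. One option is to use the Cole--Hopf type representation available for this specific regularization: set $h$ with $\nabla h=u$ and $Z=e^{h}$. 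Another is to adapt the global existence of \cite{FQ} for $V=0$ via a Girsanov argument. The second route is the one I would pursue. On $[0,T]$ the shift $W^{(\eps)}\mapsto W^{(\eps)}+tV^{(\eta^{\eps})}$, where $V^{(\eta_2^{\eps})}=\eta^{(\eps)}\star V^{(\eta^{\eps})}$, converts \eqref{eq:SBEV} into the $V=0$ equation driven by $\tilde W(t)=W(t)+tV$. By the Cameron--Martin theorem, $\tilde W$ is a cylindrical Wiener process under an equivalent measure with density $\exp\{\la W(T),V\ra-\tfrac T2\|V\|_{L^2}^2\}$. Global existence and uniqueness for $V=0$ (proved in \cite{FQ}) then transfer to $V\neq0$, because non-explosion is an almost sure property preserved under equivalent measures. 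Pathwise uniqueness transfers the same way, since the local solution map is pathwise.

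Equivalence of the weak form \eqref{SBEEV1} and the mild form \eqref{SBEmildV} is standard. Testing the mild form against $\varphi\in C^2$ and using Fubini together with the stochastic Fubini theorem yields \eqref{SBEEV1}. Conversely, taking $\varphi=e^{\frac12\Delta(t-s)}\psi$ via the time-dependent test-function version recovers the mild form. Finally, for $V=0$ the invariance of $\nu_\eps$ is \cite[Proposition 2.3/Theorem]{FQ}. Alternatively, it follows by passing $N\to\infty$ in the discrete scheme \eqref{burgers.dSVN}, whose invariant measure is $\nu_S^{(N)}$ with covariance $N\alpha_2\to\eta_2^{(\eps)}$, the Gaussian law of $\xi_\eps$. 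This limit uses the convergence of the discretization established in the rest of this section.
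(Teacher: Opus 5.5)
Your proposal is correct and follows essentially the paper's route. Local well-posedness comes from the local Lipschitz (smoothing) property of $b^{(\eps)}$, and global existence comes from transferring the non-explosion of the $V=0$ dynamics of \cite{FQ} through a Cameron--Martin/Girsanov change of measure. The only difference is where the change of measure is applied: you shift the driving noise and use the pathwise solution map, whereas the paper tilts the stationary path-space law $Q_\eps$ and then identifies the laws of the Lipschitz-truncated strong solutions.

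There is a harmless index and sign slip. The correct shift is $W\mapsto W+tV^{(\eta^{\eps})}$, so that $\eta^{(\eps)}\star W$ is shifted by $tV^{(\eta_2^{\eps})}$. Under this shift, $\tilde W$ is a cylindrical Wiener process under the density $\exp\{-\la W(T),V^{(\eta^{\eps})}\ra-\tfrac T2\la R_\eps\star V,V\ra\}$, not $\exp\{\la W(T),V\ra-\tfrac T2\|V\|_{L^2(\bT)}^2\}$. Since only the equivalence of the two measures is used, the argument is unaffected.
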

The proof of this result is fairly standard. We present it in Appendix \ref{appB}.

\subsection{Approximation of the regularized solution by
  discretization}

{Recall that  $Q_{N,V}$ is  the law of  $\big(u_{N,V}(t)\big)$,  the
solution of \eqref{burgers.dSVN}, with 
the initial data $u_{N,V}(0)$ distributed according to 
$\nu_{S}^{(N)} (du)$. Let $Q_\eps^{(V)}$ be the law of $(u_V^{(\eps)}(t))$, with $Q_\eps=Q_\eps^{(0)}$, and assume the initial data $u_V^{(\eps)}(0)$ is distributed according to $\nu_\eps$. Both $Q_{N,V}$ and $Q_\eps^{(V)}$ are supported on the space $\mathcal{C}=C([0,\infty),C(\bT))$. We have the following approximation as $N\to\infty$: } 
\begin{proposition}
  \label{prop011004-26}
  The sequence of laws $Q_{N,V}$ converge weakly over ${\cal C}$ to
  $Q_\eps^{(V)}$, as $N\to+\infty$.
  \end{proposition}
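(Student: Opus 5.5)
The plan is the standard tightness-plus-identification scheme, following \cite[Section 2--3]{FQ}, where the case $V=0$ is treated; the perturbation only adds a smooth, deterministic, linear-in-$N$ drift. First I reduce to the case $V=0$ by Girsanov. The drift $(N\nabla V^{(\alpha_2)})(j/N)$ is deterministic and lies in the range of the noise operator $N\nabla\alpha$ on $\Om_N$, since $\hat\alpha(k)\neq0$ for all $k$ by the choice of $c_{N,\eps}$. Hence $Q_{N,V}|_{[0,T]}$ has an explicit density $\exp\{M_N(T)-\tfrac12\la M_N\ra_T\}$ with respect to $Q_N|_{[0,T]}$. Its analogue for \eqref{eq:SBEV} is a Cameron--Martin shift of $W^{(\eps)}$ by $t\nabla V^{(\eta_2^\eps)}$, which is admissible since $V\in C^2$.

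In more detail, the shift $h_N$ solves $N\nabla\alpha\star h_N=N\nabla V^{(\alpha_2)}$, i.e. $h_N=\alpha\star V(\cdot/N)/\sqrt N$ up to normalisation. This gives the quadratic variation
\[
\la M_N\ra_T=T\sum_j|h_N(j)|^2\to T\|V^{(\eta^\eps)}\|_{L^2(\bT)}^2 .
\]
Crucially this is deterministic and bounded uniformly in $N$, so the Girsanov densities are uniformly integrable (even in every $L^q$).

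Second, I prove tightness of $Q_{N,V}$ on $\mathcal C$ (on each $C([0,T],C(\bT))$). For $V=0$, \cite{FQ} gives tightness of $Q_N$ via the mild formulation, stationarity under $\nu_S^{(N)}$, and Kolmogorov-type moment bounds; these moment bounds are uniform in $N$ because the nonlinearity is mollified by $\alpha_2$. Given a set $K$ compact with $Q_N(K^c)<\sigma$ uniformly, Hölder gives
\[
Q_{N,V}(K^c)\le Q_N(K^c)^{1/2}\,\sup_N\EE[\text{density}^2]^{1/2},
\]
so tightness transfers to $Q_{N,V}$. Alternatively, one can redo the moment bounds directly with the bounded extra drift.

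Third, I identify the limit. The cleanest route is to take $(u_N,w^{(\alpha)}_N)$ jointly, with the noise interpolated as a process $W_N(t,x)$ converging in law to $W^{(\eps)}$ (covariance $N\alpha_2\to\eta_2^{(\eps)}$ by \eqref{e.5291}). Along a subsequence the pair converges, and passing to the limit in the discrete weak formulation tested against $\varphi\in C^2(\bT)$ uses several convergences. The operators $N^2\Delta$ and $N\nabla$ converge to $\Delta$ and $\nabla$ on smooth test functions. The discrete term $\alpha_2\star G(u)/6$ converges to $\tfrac12 u^2\star\eta_2^{(\eps)}$, because $u$ is continuous in the limit, so $G(u)\approx 3u^2$ locally uniformly. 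Finally $V^{(\alpha_2)}\to V\star\eta_2^{(\eps)}$ uniformly. Since the limit is supported on continuous paths, the quadratic nonlinearity converges with the uniform moment bounds supplying the needed integrability. The initial laws $\nu_S^{(N)}\Rightarrow\nu_\eps$ since the covariances converge. The limit then satisfies \eqref{SBEEV1}, and the pathwise uniqueness in Theorem~\ref{thm010904-26} together with Yamada--Watanabe identifies the law as $Q^{(V)}_\eps$. As a consequence the full sequence converges.

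The main obstacle is the uniform-in-$N$ moment and tightness bounds for the quadratic nonlinearity, which is where \cite{FQ} does the real work. I would therefore lean on \cite[Proposition 2.3 and Section 3]{FQ} for $V=0$, and use the Girsanov comparison above to handle $V\neq0$ at no extra cost.
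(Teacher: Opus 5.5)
Your proposal is correct, but it identifies the limit differently from the paper.

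\textbf{The paper's route.} The paper uses the Cameron--Martin density for the whole passage $N\to\infty$, not only for tightness. Since $\int_{\bT}V=0$, one writes $V=\nabla\varphi$. Then the exponent $\frac{1}{\sqrt N}\sum_j w_j^{(\alpha)}(T)V(j/N)$ becomes, up to $O(1/N)$, $-\sqrt N\sum_j\nabla w_j^{(\alpha)}(T)\varphi(j/N)$. By the unperturbed SDE this equals $I_1-I_2$: the increment of $\frac1N\sum_j u_N(\cdot,j/N)\varphi(j/N)$ minus the tested drift integrals. That is a continuous functional of the path of $u_N$ alone. The weak convergence $Q_N\Rightarrow Q_\eps$ from \cite{FQ}, together with a uniform second-moment bound on the density, then gives
$\lim_N\EE F(u_{N,V})=\EE\big[F(u^{(\eps)})\exp\{\la W^{(\eps)}(T),V\ra-\frac T2\la R_\eps\star V,V\ra\}\big]$.
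This equals $\EE F(u^{(\eps)}_V)$ by the continuum Girsanov construction of Appendix~\ref{appB}.

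\textbf{What each route buys.} The paper never passes to the limit in the perturbed equation and needs no uniqueness step. It also obtains, along the way, the Girsanov representation of $Q^{(V)}_\eps$ that is reused in \eqref{041805-26}. Your route uses Girsanov only for tightness. You then pay for identification with joint convergence of $(u_{N,V},W_N)$ and a uniqueness argument. In exchange you never have to rewrite the exponent as a path functional of $u_N$, so your argument would survive perturbations where that is impossible.

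\textbf{Two small adjustments.}
\begin{itemize}
\item Theorem~\ref{thm010904-26} asserts uniqueness only among $(\mathcal W_t)$-predictable processes, so do not invoke Yamada--Watanabe. Instead observe that the limit satisfies the mild equation pathwise. The noise is additive and the mollified nonlinearity is locally Lipschitz, so any two continuous solutions with the same $(u(0),W)$ coincide by Gronwall. Hence $u$ is the same measurable functional of the independent pair $(u(0),W)$ as $u^{(\eps)}_V$.
\item After Skorokhod, the nonlinear term needs no moment bounds. Uniform convergence of the interpolants to a continuous limit gives $\max_j|u_N(j/N)-u_N((j-1)/N)|\to0$, hence $\max_j|G(u_N)(j)-3u_N(j/N)^2|\to0$. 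The same fact is what makes $I_2$ continuous in the paper's argument.
\end{itemize}
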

  \proof
  In the case of $V=0$, the convergence was proved in \cite{FQ}. The following argument deals with the case of $V\neq0$ through the Cameron-Martin change of measure.

  Suppose that $F: {\cal C}\to\bbR$ is bounded and continuous. {In \eqref{burgers.dSVN}, we view the perturbed equation as the unperturbed one with the driving force tilted as
  \[
  dw_j^{(\alpha)}(t)\mapsto dw_j^{(\alpha)}(t)+\tfrac{1}{\sqrt{N}}V^{(\alpha_2)}(\tfrac{j}{N})dt,
  \] so by Cameron-Martin theorem, }
  \begin{align}
    \label{021104-26}
  &  \EE_{\nu_S^{(N)}}F\big(u_{N,V}(\cdot)\big)=\EE_{\nu_S^{(N)}}\Big[F\big(u_{N}(\cdot)\big) \\
  &\times \exp\Big\{   
  \frac{1}{\sqrt{N}}\sum_{j\in\bT_N} w _j^{(\al)}(T) V\Big(\frac{j}{N}\Big)
  -\frac
{T  }{2N}\sum_{j\in\bT_N}  V^{{(\alpha_2)}}\Big(\frac{j}{N}\Big) V\Big(\frac{j}{N}\Big)
    \Big\} \Big].\notag
  \end{align}
  It follows from \eqref{V00} that there exists $\varphi\in C^3(\bT)$
  such that $V(x)=\nabla\varphi(x)$.
  By the fact that 
  $
V\Big(\frac{j}{N}\Big)=N\nabla \varphi\Big(\frac{j}{N}\Big)+O\Big(\frac{1}{N}\Big)
$, and recalling the definition of $\alpha_2$ in \eqref{e.defalpha2}, 
we can write
  \begin{align*}
    &\lim_{N\to+\infty}\EE_{\nu_S^{(N)}}F\big(u_{N,V}(\cdot)\big)\\
          &
      =\lim_{N\to+\infty}\EE_{\nu_S^{(N)}}\Big[F\big(u_{N}(\cdot)\big) \exp\Big\{   -
    \sqrt{N}\sum_{j\in\bT_N} \nabla w _j^{(\al)}(T) \varphi\Big(\frac{j}{N}\Big)
  -\frac
{T  }{2}\la R_\eps\star V,V\ra 
    \Big\} \Big],
  \end{align*}
  where $R_\eps$ is the spatial covariance function of the noise given by \eqref{weps}.
  From \eqref{burgers.dSVN} we can write $\sqrt{N}\sum_{j\in\bT_N} \nabla w _j^{(\al)}(T)
   \varphi\Big(\frac{j}{N}\Big)=I_1-I_2$ with 
  \begin{equation}
\label{041004-26}
\begin{split}
   I_1= \frac{1}{N}\sum_{j\in\bT_N}u_N
   \Big(t,\frac{j}{N}\Big) \varphi\Big(\frac{j}{N}\Big)-\frac{1}{N}\sum_{j\in\bT_N}u_N
   \Big(0,\frac{j}{N}\Big) \varphi\Big(\frac{j}{N}\Big)
   \end{split}
   \end{equation}
   and
   \begin{equation}
   \begin{aligned}
   I_2= \int_0^t\Big\{& \frac{1}{2N}\sum_{j\in\bT_N}u_N
   \Big(s,\frac{j}{N}\Big) N^2\Delta \varphi\Big(\frac{j}{N}\Big)
   ds\\
   &-  \frac{1}{6N}\sum_{j\in\bT_N} \alpha_2\star
G\Big (u_N(s))\Big)(j)
N\nabla\varphi\Big(\frac{j}{N}\Big)\Big\}ds.
\end{aligned} 
\end{equation}
In addition,  we have the uniform integrability that can be seen from
\begin{align*}
  &
      \EE_{\nu_S^{(N)}}\Bigg\{\Big[F\big(u_{N}(\cdot)\big) \exp\Big\{   -
    \sqrt{N}\sum_{j\in\bT_N} \nabla w _j^{(\al)}(T) \varphi\Big(\frac{j}{N}\Big)
  -\frac
{T  }{2}\la  R_\eps\star V,V\ra_{L^2(\bT)}
    \Big\} \Big]^2\Bigg\}\\
  &
     \le \|F\|_\infty^2 \EE_{\nu_S^{(N)}}\Bigg\{ \exp\Big\{   
    2\sqrt{N}\sum_{j\in\bT_N} w _j^{(\al)}(T) \nabla\varphi\Big(\frac{j}{N}\Big)\Big\}
    \Bigg\} \\
  &
   \le \|F\|_\infty^2 \EE_{\nu_S^{(N)}}\Bigg\{ \exp\Big\{   
     \frac{2T\|\varphi'\|_\infty^2 }{N}\sum_{j,j'\in\bT_N}
    \al_2 ( j-j')  \Big\}
     \Bigg\}\le C
  \end{align*}
  for $N=1,2,\ldots$, where $C$ is some constant independent of $N$.
  
  Since the laws of $u_{N}(\cdot)$  converge weakly to $u^{(\eps)}$, see
  \cite[Section 2.5]{FQ}, we
 conclude that 
 \begin{align*}
  & \lim_{N\to+\infty} \EE_{\nu_S^{(N)}}F\big(u_{N,V}(\cdot)\big)\\
  &=\EE_{\nu_\eps}\Big[F\big(u^{(\eps)}(\cdot)\big) \exp\Big\{   -
    \int_{\bbT}\nabla W^{(\eps)}(T,x) \varphi(x)dx
  -\frac
{T  }{2}\la R_\eps\star V,V\ra 
    \Big\} \Big]\\
   &
     =\EE_{\nu_\eps}\Big[F\big(u^{(\eps)}(\cdot)\big) \exp\Big\{   
    \la W^{(\eps)}(T), V\ra
  -\frac
{T  }{2}\la  R_\eps\star V,V\ra     \Big\} \Big]{=\EE_{\nu_\eps}\Big[F(u_V^{(\eps)}(\cdot))\Big]},
  \end{align*}
  and the conclusion of the proposition follows.
  \qed

  \subsection{Approximation of the solution of the SBE by the solutions of the regularized SBE}

{In this section, we remove the regularization and study the convergence as $\eps\to0$. When $V=0$, the convergence of $u^{(\eps)}$ to the solution of the stochastic Burgers equation was proved in \cite{FQ}. To treat the case $V\neq0$, we again make use of the Cameron-Martin change of measure, under which the noise  shows up in the Radon-Nikodym derivative. Thus, we need the joint convergence of the solution and the noise, and this section is devoted to the proof of such a result.}

  Recall that $\big(W(t)\big)$ is the cylindrical
Wiener process on $L^2(\bT)$ that was used in the construction of $W^{(\eps)}$ in \eqref{Weps}. 
      Fix $T>0$. {Assume $u^{(\eps)}$ solves \eqref{eq:SBEV} with $V=0$, with $u^{(\eps)}(0)$ distributed according to $\nu_\eps$. } The laws of the random elements
  \[
  {\frak X}_\eps(T):=\Big(u^{(\eps)}(T) , u^{(\eps)}(0),W (\cdot)\Big)
  \] are
  tight in the space ${\cal
    H}:=\rH^2\times C\big([0,T];\rH\big)$. We have 
      \begin{proposition}
        \label{prop012404-26}
        The laws of ${\frak X}_{\eps}(T)$ weakly converge
  as $\eps\to0$. The  limiting law
  coincides with that of  ${\frak X}:=\Big(u(T),u(0),W(\cdot)\Big)$ where
  $u$ solves  \eqref{011804-26} with $V=0$ driven by $W$ and $u(0)$ is distributed according to $\nu$.
\end{proposition}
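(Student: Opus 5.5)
Since the laws of ${\frak X}_\eps(T)$ are tight in ${\cal H}$, it suffices to show that every subsequential limit has the law of ${\frak X}$. The natural route is through the Cole--Hopf transform, which Funaki--Quastel use for $V=0$. Set $h^{(\eps)}={\frak p}(u^{(\eps)})$ up to the height shift, and let $Z^{(\eps)}=\exp\{h^{(\eps)}\}$ (suitably renormalized by $e^{-c_\eps t}$). By Itô's formula, $Z^{(\eps)}$ solves a stochastic heat equation driven by the \emph{same} cylindrical process $W$, through the smoothed noise $W^{(\eps)}=\eta^{(\eps)}\star W$, plus the Funaki--Quastel correction terms coming from the $\eta_2^{(\eps)}$-smearing of the nonlinearity. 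The initial condition $u^{(\eps)}(0)\sim\nu_\eps$ is realized as $\eta^{(\eps)}\star\xi$ for a fixed white noise $\xi\sim\nu$ independent of $W$. With this coupling, $u^{(\eps)}(0)\to \xi=u(0)$ in $\rH$ almost surely, since the primitives $\eta^{(\eps)}\star {\frak p}(\xi)$ converge uniformly. The $W$-component is untouched, so everything reduces to identifying the limit of $u^{(\eps)}(T)$ jointly with $(u(0),W)$.

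The key steps, in order, are as follows.

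\begin{itemize}
\item[(1)] Write the mild formulation of $Z^{(\eps)}$ on a fixed probability space carrying $(\xi,W)$, following \cite[Section 3]{FQ}.

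\item[(2)] Show that $Z^{(\eps)}(T)\to Z(T)$ in probability in $\rC$, where $Z$ is the solution of \eqref{e.she} with $V=0$, driven by $W$ and started from $e^{{\frak p}(\xi)}$. The argument compares chaos expansions or mild forms: $W^{(\eps)}\to W$ as an $L^2$-valued integrand, the initial data converge uniformly, and one needs moment bounds for $Z^{(\eps)}$ and $1/Z^{(\eps)}$ that are uniform in $\eps$. These bounds are available as in \cite{FQ}, or by the polymer argument of Lemma \ref{l.ulbd}.

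\item[(3)] Positivity and lower bounds on $Z(T)$ give $\log Z^{(\eps)}(T)\to\log Z(T)$ uniformly in probability. Because the metric ${\rm d}_\rH$ only sees ${\frak p}$, subtracting the value at $0$ kills the constants $c_\eps T$. Hence $u^{(\eps)}(T)\to u(T)$ in $\rH$ in probability, jointly with $u^{(\eps)}(0)\to u(0)$ and with the identical $W$.

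\item[(4)] Convergence in probability of the triple implies weak convergence of ${\frak X}_\eps(T)$ to ${\frak X}$.
\end{itemize}

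The main obstacle is step (2). Funaki--Quastel proved only convergence in law of $u^{(\eps)}$, and the renormalizing correction they identify, coming from the mismatch between $\tfrac12\nabla(u^2\star\eta_2^{(\eps)})$ and the Cole--Hopf nonlinearity, is an extra drift term. For $V=0$ this term converges only in a weak, averaged sense under the stationary measure $\nu_\eps$. I would try to show that this drift becomes a deterministic constant in the limit, along the lines of their Boltzmann--Gibbs principle. The plan is to use stationarity together with the Kipnis--Varadhan/It\^o trick based on the symmetric--antisymmetric decomposition of Proposition \ref{prop023103-26N}, and to bound the corresponding time integral in $L^2(\PP_{\nu_\eps})$. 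The bound then yields convergence in probability of $Z^{(\eps)}$, rather than only in law, because all other terms are pathwise functionals of the fixed $W$.

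If strong convergence proves too hard, the fallback is to characterize the subsequential limit law $(\bar u_T,\bar u_0,\bar W)$ directly. One shows that, conditionally, $\bar u$ solves the martingale problem or energy-solution formulation of \cite{GP20} driven by $\bar W$. Uniqueness of energy solutions, which includes the identification of the driving noise, then forces the law of ${\frak X}$.
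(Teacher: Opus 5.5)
Your reduction is sound as far as it goes: tightness, the coupling $u^{(\eps)}(0)=\eta^{(\eps)}\star\xi$ with $u(0)=\xi$, and ``in probability implies in law.'' But the whole content of the proposition sits in your step (2), and neither of the two tools you name can deliver it. Step (2) is a strong-convergence theorem for the Funaki--Quastel scheme with a fixed noise, and both of its ingredients fail as stated.

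First, uniform bounds on $Z^{(\eps)}$ and $1/Z^{(\eps)}$ do not follow from the argument of Lemma~\ref{l.ulbd}. There, Cameron--Martin removes a \emph{deterministic, smooth} shift $V$ of the noise, which leaves the linear SHE, where \cite{GK} supplies propagator bounds. The Cole--Hopf transform of \eqref{eq:SBEV} instead carries the potential $\tfrac12[(u^{(\eps)})^2\star\eta_2^{(\eps)}-(u^{(\eps)})^2]+{\rm const}$. This potential is random, depends on the solution, and has pointwise size $\eps^{-1}$. So the $Z^{(\eps)}$-equation is nonlinear, and no shift of the noise removes this term.

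Second, the It\^o-trick/Kipnis--Varadhan bound in $L^2(\PP_{\nu_\eps})$ controls additive functionals $\int_0^tF(u^{(\eps)}(s))\,ds$ of the stationary Markov process. The term you must kill is $\int_0^tP_{t-s}\big[Z^{(\eps)}(s)(b^\eps(s)-c)\big]ds$. Here $Z^{(\eps)}(s,x)=e^{h^{(\eps)}(s,0)}e^{{\frak p}(u^{(\eps)}(s))(x)}$ contains the height $h^{(\eps)}(s,0)$, which is a path functional and not a function of $u^{(\eps)}(s)$. Without these two inputs there is no Gronwall estimate for $Z^{(\eps)}-Z$. The remark that ``all other terms are pathwise functionals of the fixed $W$'' therefore does not close the argument.

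The idea you are missing is that convergence in probability is not needed. The paper writes $u^{(\eps)}={\cal S}(u^{(\eps)}(0),W)$ for a deterministic Borel map ${\cal S}$. It takes a subsequence along which ${\frak X}_{\eps_n}(T)$ converges in law and passes, via Skorokhod, to a space where the triples converge a.s. By measurability of ${\cal S}$, $\hat u^{(\eps_n)}$ is still the regularized solution driven by $\hat W_{\eps_n}$ from $\hat u^{(\eps_n)}(0)$.

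It then invokes \cite[Section 3.5]{FQ}. That result identifies the limit of these solutions on a \emph{common} probability space, weakly in $L^2([0,T]\times\bT\times\tilde\Omega)$, as the Cole--Hopf solution driven by the limiting noise. An a.s. limit and a weak $L^2$ limit of the same uniformly integrable sequence coincide. Hence the Skorokhod limit of $\hat u^{(\eps_n)}(T)$ is $u(T)$ for the limiting $(u(0),W)$, which fixes the joint law. The relevant input from \cite{FQ} is this fixed-noise weak identification, not mere convergence in law as you assert.

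Your fallback through energy solutions is a legitimate alternative: it replaces \cite{FQ} by the Gubinelli--Perkowski result that a stationary energy solution is the Cole--Hopf solution driven by its own martingale noise. As written, though, it is only an outline. You would still need:
\begin{itemize}
\item process-level tightness of $u^{(\eps)}$, not just tightness at times $0$ and $T$;
\item energy estimates for the smeared nonlinearity, uniform in $\eps$;
\item identification of the martingale part of every limit point as $\nabla\bar W$ in the joint filtration of $(\bar u,\bar W)$;
\item the backward equation for the time-reversed limit.
\end{itemize}
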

\proof
  Recall that $u^{(\eps)}(t)$ solves \eqref{eq:SBEV}, with $V=0$, in the sense of Theorem
 \ref{thm010904-26}, starting at stationarity. Then, since the  equation
   is with an additive noise, equipped with a
 cylindrical Wiener process $W(\cdot)$ and $u^{(\eps)}(0)$, one can
 construct a deterministic, Borel measurable mapping ${\cal S}: \rH \times C\big([0,T];\rH\big)
 \to C\big([0,T];\rH\big)$ such that
 \begin{equation}
   \label{012404-26}
u^{(\eps)}(\cdot)={\cal S}\Big(u^{(\eps)}(0),W\Big).
\end{equation}
Suppose that $\eps_n\to0$ and  
  the laws of ${\frak
  X}_{\eps_n}(T):=\Big(u^{(\eps_n)}(T),
u^{(\eps_n)}(0),W (\cdot)\Big)$ weakly converge to the law of some ${\frak
  X} $. According to the Skorokhod representation theorem, there exists
a probability space $(\tilde \Omega,\tilde{\cal F},\tilde \PP)$
and the random elements  
\[
\hat{\frak
  X}_{\eps_n}(T):=\Big(\hat u^{(\eps_n)}(T),\hat u^{(\eps_n)}(0),\hat W_{\eps_n} (\cdot)\Big),
\] $\hat {\frak
  X} (T)$  such that for each $n$
\begin{equation}
  \label{011505-26a}
{\rm Law}\Big(\hat{\frak
  X}_{\eps_n}(T) \Big)={\rm Law}\Big( {\frak
  X}_{\eps_n}(T) \Big)\quad \mbox{and}\quad {\rm Law}\Big(\hat{\frak
  X} (T) \Big)={\rm Law}\Big( {\frak
  X} (T) \Big)
\end{equation}
and
\begin{equation}
  \label{022404-26}
\lim_{n\to+\infty}\hat{\frak
  X}_{\eps_n}(T) =\hat{\frak
  X} (T) ,\quad \tilde \PP-\mbox{a.s.}
\end{equation}
The above convergence holds in the metric of the space $\rH^2 \times
C\big([0,T];\rH\big)$.
In particular, from \eqref{011505-26a} we conclude that 
\begin{align}
  \label{021505-26}
  \hat u^{(\eps_n)}(\cdot) = {\cal S}\Big(\hat u^{(\eps_n)}(0),\hat
  W_{\eps_n}\Big)  .
\end{align}

Let
$\big(\tilde u^{(\eps_n)}(t)\big)_{t\in[0,T]}$ be the solution of
\eqref{eq:SBEV}, with $V=0$,  with the driving force  $\hat W_{\eps_n}
(\cdot)$ and the initial data $\hat u^{(\eps_n)}(0)$. According to
\eqref{012404-26} and \eqref{021505-26}
we have
$$
\tilde u^{(\eps_n)}(\cdot)={\cal S}\Big(\hat u^{(\eps_n)}(0),\hat W_{\eps_n}\Big) = \hat u^{(\eps_n)}(\cdot).
$$
Therefore,
\[
  \begin{split}
& {\rm Law}\Big(\tilde u^{(\eps_n)}(T),
\hat u^{(\eps_n)}(0),\hat W_{\eps_n} (\cdot)\Big) ={\rm Law}\Big( u^{(\eps_n)}(T), 
u^{(\eps_n)}(0), W_{\eps_n} (\cdot)\Big)   
\end{split}
\]
and  the random elements
$ \Big(\tilde u^{(\eps_n)}(T),\hat W_{\eps_n} (\cdot),
\hat u^{(\eps_n)}(0)\Big)
$
converge $\tilde\PP$ a.s.
It has been shown in \cite[Section 3.5]{FQ} that the solutions
$\big(\tilde  u^{(\eps_n)}(t)\big)_{t\in[0,T]}$
converge weakly (in the sense of the Hilbert space topology) over
$L^2\big([0,T]\times \bbT\times \tilde \Omega)$  to $\big(   u
(t)\big)_{t\in[0,T]}$ that satisfies \eqref{011804-26} {with $V=0$} and starts at
stationarity. The conclusion of the proposition then follows.
\qed

\section{The proof of Theorem~\ref{cor022404-26}}
\label{s.last}

{Recall the goal was to derive quantitative properties on the invariant measure $\nu_V$ for the perturbed dynamics 
\begin{equation}\label{e.uv61}
d u_V(t,x)=\Big(\frac12\Delta u_V(t,x)+\frac12 \nabla u_V^2(t,x)+\nabla V(t,x)\Big)dt +d\nabla W(t,x).
\end{equation}
The existence and uniqueness of $\nu_V$ are guaranteed by Theorem~\ref{thm012504-26}. Through the discussion in the previous sections, we constructed a family of discrete and continuous approximations, $\{u_{N,V}\}_N$ and $\{u^{(\eps)}\}_\eps$ which solve \eqref{burgers.dSVN} and \eqref{eq:SBEV} respectively. The properties of $u_V$ can be accessed from $u_{N,V}$, by first sending $N\to\infty$ then sending $\eps\to0$. Most importantly, we have obtained estimates on the law of $u_{N,V}$, which are uniform in $N$ and $\eps$, see Theorems~\ref{thm081704} and \ref{t.densityLp}.  The goal of this section is to transfer those properties of $u_{N,V}$ to $u_V$, and thereby prove Theorem~\ref{cor022404-26}.}

\label{L1s}

\subsection{The proof of Proposition \ref{sec1aC}}

\label{secL1sa}

{Recall that $\mu_{t,V}(F)$ is the law of $u_V(t;F)$, the solution to \eqref{e.uv61} with $u_V(0)$ distributed according to $F(\cdot)\in D(\nu)$.  The goal here is to show that, for any $t>0$, $\mu_{t,V}(F)$ is absolutely continuous with respect to $\nu$. Since $F$ is fixed in this section, we abbreviate it as $\mu_{t,V}$.}

 Suppose that $A\subset \rH$ is such that $\nu(A)=0$. Then
\begin{align*}
 & \mu_{t,V} (A)=\int_{\rH }\nu(du)F(u)\EE
   1_A\Big(u_V(t;u)\Big) \\
  &
    =\int_{\rH }\nu(du)F(u)\EE \Big[
   1_A\big(u(t;u)\big)
    \exp\Big\{\int_0^t\la dW(s),V\ra-\frac
t2
\|V\|_{L^2(\bT)}^2\Big\}\Big].
\end{align*}
Using the H\"older inequality we can write for any $1/p+1/q=1$ and $p\in(1,+\infty)$
\begin{align}
  \label{031605-26}
 & \mu_{t,V} (A)\le \left\{\int_{\rH }\nu(du)F(u)\EE
   1_A\Big(u(t;u)\Big)\right\}^{1/q}\notag\\
  &
    \times\left\{\int_{\rH }\nu(du)F(u)
    \EE\Big[\exp\Big\{p \int_0^t\la dW(s),V\ra-\frac
{pt}2
\|V\|_{L^2(\bT)}^2\Big\}\Big]\right\}^{1/p}
\\
  &
    = \left\{\int_{\rH }\nu(du)F(u)\EE
   1_A\Big(u(t;u)\Big)\right\}^{1/q}    \exp\Big\{ \frac
{(p-1)t}2
\|V\|_{L^2(\bT)}^2\Big\} . \notag
\end{align}
Since $F\in D(\nu)$, we can write by the Lebesgue dominant convergence
theorem and the invariance of $\nu$ under the dynamics of $u(t;u)$
\begin{align*}
  &\int_{\rH }\nu(du)F(u)\EE
  1_A\Big(u(t;u)\Big)\\
  &
    =\lim_{M\to+\infty}\int_{\rH }\nu(du)\big(M\wedge F(u)\big)\EE
   1_A\Big(u(t;u)\Big)\le \lim_{M\to+\infty}M\nu(A)=0.
\end{align*}

Thus, the absolute continuity of $\mu_{t,V} $ w.r.t. $\nu$  follows. As a
result, its Radon-Nikodym derivative ${\cal
  P}_t^VF$ satisfies
\eqref{f-p}.  The fact that the family $\big({\cal
  P}_t^VF\big)$ forms a semigroup is the consequence  of the semigroup
property of the adjoint semigroup to $\big({\cal U}_t^V\big)$. We show
its strong continuity.

For this purpose, for a fixed $t$ consider the time reversed stationary
solution
of the SBE with $V=0$
\begin{equation}
  \label{tu0}
  \tilde u (s):=u(t-s),\quad s\in[0,t].
\end{equation}
According to \cite[Proposition 1.1]{BQS11},
  the law $\tilde Q$ of $\tilde u$ on ${\cal C}_T:=C\big([0,T];\rm H\big)$ coincides with that of
  the solution of
  \begin{equation}
  \label{tSBEE1}
  \begin{split}
&d \tilde u (t,x)=\big(\tfrac12\Delta\tilde
u(t,x)-\tfrac12\nabla  \tilde u^2   \big)dt+ d \nabla\tilde
W   (t,x)
,\quad\mbox{with}\\
&
\mbox{\rm law}\big(\tilde u (0)\big)=\nu, 
\end{split}
\end{equation}
where $\big(\tilde W 
(t)\big)$ is some    $L^2(\bT)$-cylindrical Wiener process, adapted w.r.t. the natural filtration of
$\big(\tilde u (s)\big)_{s\in[0,t]}$. We understand the solution of
the above equation analogously, as   in the case of $u(\cdot)$, see
Section \ref{sec1.1}. Using the above interpretation we can also
define 
 $\tilde u(\cdot ;u)$ - the solution of \eqref{tSBEE1}
satisfying  $\tilde u(0 ;u)=u$ for $\nu$ a.s. $u\in\rH$.

By \cite[Theorem 2.4, see also Remark 2.7 and Section 2.3]{GP}
  we have  
\begin{equation}
  \label{021904-26}
   \begin{split}
   &- \int_0^t
\la \tilde u(s),\Delta \varphi\ra ds- \la \varphi,\nabla\tilde
W (t)\ra  
=
  \la \varphi,\nabla 
W (t)\ra, \quad \varphi\in C^\infty(\bbT) .
\end{split}
\end{equation}
Let
 \begin{equation}
  \label{041605-26}
           \widehat{\cal X}_t (V,  u):=\int_0^t \la \tilde u (s;u), 
 \nabla   V\ra ds
                 - \la   \tilde W (t),V\ra  -\frac
                 {t }{2}\|V\|_{L^2(\bT)}^2 .
                 \end{equation}
 The strong continuity of $\big({\cal
  P}_t^VF\big)$  is then a consequence  of the following result.
\begin{lemma}
  \label{thm012104-26}
  Assume \eqref{V00} and $t>0$. Then, for any $q\in(1,+\infty)$
we have
  \begin{equation}
  \label{051605-26}
\int_{\rH }\exp\Big\{q\widetilde{\cal X}_t(V, u
      )
   \Big\} \nu(du)=\exp\left\{\frac
{tq(q-1)}2
\|V\|_{L^2(\bT)}^2\right\}<+\infty.
\end{equation}
In addition, $F\in L^p(\nu)$, where
  $p\in(1,+\infty)$, we have 
\begin{equation}\label{021804-26}
\begin{aligned}
&{\cal P}_t^VF(u)  =
   \EE \Big[
  F\big(\tilde u(t ;u)\big) \exp\Big\{\widetilde{\cal X}_t(V, u
      )
   \Big\}\Big],\quad \nu \mbox{ a.s. in $u$.}
\end{aligned}
\end{equation}
\end{lemma}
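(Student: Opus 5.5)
The plan is to reverse time in the Cameron--Martin representation already used in the proof of Proposition~\ref{sec1aC}, and then identify the density.

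\emph{Rewriting the exponent.} For a bounded Borel $G$ on $\rH$ we have
\[
\int G\,d\mu_{t,V}=\int_{\rH}\nu(du)F(u)\,\EE\Big[G\big(u(t;u)\big)\exp\Big\{\la W(t),V\ra-\tfrac t2\|V\|^2_{L^2(\bT)}\Big\}\Big].
\]
This is the expectation, under the stationary unperturbed dynamics started from $\nu$, of $F(u(0))\,G(u(t))\exp\{\la W(t),V\ra-\tfrac t2\|V\|^2\}$. By \eqref{V00} we may write $V=\nabla\varphi$ with $\varphi$ smooth enough, so that $\la W(t),V\ra=-\la\varphi,\nabla W(t)\ra$. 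Using \eqref{021904-26}, passing from $\varphi$ to $V$ by approximation, the exponent becomes a functional of the reversed process:
\[
\la W(t),V\ra=\int_0^t\la\tilde u(s),\Delta\varphi\ra\,ds+\la\varphi,\nabla\tilde W(t)\ra=\int_0^t\la\tilde u(s),\nabla V\ra\,ds-\la\tilde W(t),V\ra.
\]
Hence the exponent equals $\widetilde{\cal X}_t(V,\tilde u(0))$, where $\widetilde{\cal X}_t$ is evaluated along the path of $\tilde u$.

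\emph{Identifying the density.} By \cite[Proposition 1.1]{BQS11}, $(\tilde u(s))_{s\in[0,t]}$ has the law of the solution of \eqref{tSBEE1} started from $\nu$. We have $\tilde u(0)=u(t)$ and $\tilde u(t)=u(0)$, so the expectation becomes
\[
\int\nu(du)\,G(u)\,\EE\Big[F\big(\tilde u(t;u)\big)e^{\widetilde{\cal X}_t(V,u)}\Big].
\]
This identifies ${\cal P}_t^VF$ as in \eqref{021804-26}. To justify the disintegration in $u$, I would use the measurable solution map of \eqref{tSBEE1}, exactly as with ${\cal S}$ in \eqref{012404-26}.

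\emph{The exponential moment \eqref{051605-26}.} I would run the same reversal with $F\equiv G\equiv1$ and $V$ replaced by $qV$. Since $\widetilde{\cal X}_t(qV,u)=q\widetilde{\cal X}_t(V,u)-\tfrac{t}{2}(q^2-q)\|V\|^2$, we get
\[
\int\EE\, e^{q\widetilde{\cal X}_t(V,u)}\,\nu(du)=e^{\frac{tq(q-1)}2\|V\|^2}\,\EE\,e^{\la W(t),qV\ra-\frac{tq^2}2\|V\|^2}=e^{\frac{tq(q-1)}2\|V\|^2}.
\]
The last equality holds because $\la W(t),qV\ra$ is Gaussian with variance $tq^2\|V\|^2$. (The integrand in \eqref{051605-26} should be read with the expectation $\EE$ inside, since $\widetilde{\cal X}_t$ is random.)

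\emph{Strong continuity.} For $F\in L^p(\nu)$, Hölder's inequality together with stationarity of $\tilde u$ under $\nu$ gives $\|{\cal P}_t^VF\|_{L^1}$-type control. Continuity in $t$ then follows from $\widetilde{\cal X}_t\to0$ in every $L^q$, which comes from the same Gaussian computation applied to increments. Continuity of $F(\tilde u(t))\to F(u)$ in $L^1$ follows first for continuous bounded $F$, then for general $F$ by density and the $L^1$-contraction property.

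The main obstacle is making \eqref{021904-26} usable with test function $V$, which requires $\varphi\in C^3$ rather than $C^\infty$. It also requires checking that $\int_0^t\la\tilde u,\nabla V\ra\,ds$ is well defined pathwise for $\tilde u\in C([0,t];\rH)$; this holds since $\nabla V\in C^1$ and $\tilde u\in H^{-1}$. I would handle both points by approximating $V$ by smooth $V_n$ and passing to the limit in $L^2(\PP)$ using the Gaussian bounds above.
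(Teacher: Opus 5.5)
Your proposal is correct and follows essentially the same route as the paper. Both arguments run the Cameron--Martin representation of $\mu_{t,V}$, reverse time in the stationary unperturbed dynamics, and use \eqref{021904-26} with $\varphi'=V$ to rewrite $\la W(t),V\ra$ as $\int_0^t\la\tilde u(s),\nabla V\ra ds-\la\tilde W(t),V\ra$; the exponential moment \eqref{051605-26} then follows from a Gaussian computation. Your $V\mapsto qV$ repackaging of that moment is equivalent to the paper's direct computation. Your points about the $C^3$ regularity of $\varphi$, reading the expectation inside the integrand, and the Hölder extension from bounded $F$ to $F\in L^p(\nu)$ are small refinements of details the paper passes over.
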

\proof
 Suppose first that $F,G\in L^\infty(\nu)$. Using the Cameron-Martin-Girsanov change of
 measure formula and the time reversal in \eqref{tu0}, we can write
 \[
 \begin{aligned}
  \int_{\rH }G(u)&{\cal P}_t^VF(u)\nu(d u)  =
   \int_{\rH} \EE \Big[ G(u(t))F(u(0))
   \exp\Big\{  \ \la W(t),V\ra-\frac
{t}2
\|V\|_{L^2(\bT)}^2\Big\} \Big]  \\
&=\int_{\rH} \EE \Big[ G(\tilde{u}(0;u))F(\tilde{u}(t;u))
   \exp\Big\{  \ \la W(t),V\ra-\frac
{t}2
\|V\|_{L^2(\bT)}^2\Big\} \Big].
       \end{aligned}
       \]
Thanks to \eqref{V00}
we can find  $\varphi$ such that $\varphi' = V$ in \eqref{021904-26}, so we have 
\begin{align}
\label{021605-26} \int_0^t
\la \tilde u(s),\nabla V\ra ds- \la  \tilde
W (t),V\ra  
=
  \la  
  W (t),V\ra .
\end{align}
Thus we conclude \eqref{021804-26}. 
From \eqref{021605-26} it follows also that
{\begin{align}
   \label{011605-26}
 &\int_{\rH }\exp\Big\{q\widetilde{\cal X}_t(V, u
      )
   \Big\} \nu(du) =
   \int_{\rH} \EE \Big[ 
   \exp\Big\{  \ q\la W(t),V\ra-\frac
{qt}2
\|V\|_{L^2(\bT)}^2\Big\} \Big], 
       \end{align}}
which leads to \eqref{051605-26}.
\qed


As a direct corollary of Lemma \ref{thm012104-26} we conclude the following.
\begin{corollary}
  \label{sec1aC1}
  For   $p\in(1,+\infty)$ and $1/p+1/q=1$ we have 
\begin{equation}
  \label{022808-25a}
  \|{\cal P}_t^VF\|_{L^{p}(\nu)}\le \| F\|_{L^{p}(\nu)} \exp\Big\{ \frac
{(q-1)t}2
   \|V\|_{L^2(\bT)}^2\Big\} ,\quad F\in L^p(\nu).
 \end{equation}
\end{corollary}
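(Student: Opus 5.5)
The plan is to read \eqref{022808-25a} off the representation \eqref{021804-26} of Lemma~\ref{thm012104-26}, using H\"older's inequality, the invariance of $\nu$ under the reversed dynamics, and the exponential moment \eqref{051605-26}. Since \eqref{021804-26} is stated for $F\in L^p(\nu)$, and $\nu$ a.s. in $u$, I first prove the bound for $F\in L^\infty(\nu)$. I then extend it to all $F\in L^p(\nu)$ by density, using that ${\cal P}_t^V$ is an $L^1$ contraction. Concretely, if $F_n\to F$ in $L^p$ with $F_n$ bounded, then ${\cal P}_t^VF_n\to{\cal P}_t^VF$ in $L^1$. Along a subsequence the convergence is a.e., and Fatou's lemma passes the uniform $L^p$ bound to the limit.

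For bounded $F$ and $\nu$-a.e. $u$, I apply H\"older's inequality with exponents $p,q$ inside the expectation in \eqref{021804-26}:
\[
|{\cal P}_t^VF(u)|\le \Big(\EE\big|F(\tilde u(t;u))\big|^p\Big)^{1/p}\Big(\EE \exp\big\{q\widetilde{\cal X}_t(V,u)\big\}\Big)^{1/q}.
\]
Raising to the power $p$ and integrating against $\nu(du)$, I then apply H\"older again in $u$, this time with exponents $q/(q-p)\cdot$-type conjugates. Rather than that, the cleaner route is to bound the $\nu(du)$-integral of the product directly by H\"older's inequality on the product space $\nu\otimes\PP$. This gives
\[
\|{\cal P}_t^VF\|_{L^p(\nu)}^p\le \int_{\rH}\EE|F(\tilde u(t;u))|^p\,\Big(\EE e^{q\widetilde{\cal X}_t}\Big)^{p/q}\nu(du).
\]

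The cleanest choice is to avoid the mixed term altogether by applying H\"older pointwise with the weight split as $e^{\widetilde{\cal X}_t}=e^{\widetilde{\cal X}_t/p}\cdot e^{\widetilde{\cal X}_t/q}$. This yields
\[
|{\cal P}_t^VF(u)|^p\le \EE\Big[|F(\tilde u(t;u))|^p e^{\widetilde{\cal X}_t}\Big]\Big(\EE e^{\widetilde{\cal X}_t}\Big)^{p/q}.
\]
That only produces an $L^\infty$ factor in $u$, so instead I use the exponent $q$ split, which gives
\[
|{\cal P}_t^VF(u)|^p\le \EE|F(\tilde u(t;u))|^p\cdot\Big(\EE e^{q\widetilde{\cal X}_t}\Big)^{p/q}.
\]
Integrating in $\nu$ leaves the mixed term to control, and the main obstacle is controlling it uniformly in $u$.

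To handle this I would exploit that, by \eqref{021605-26}, $\widetilde{\cal X}_t(V,u)$ equals $\la W(t),V\ra-\frac t2\|V\|^2_{L^2(\bT)}$. This is a Gaussian variable under the stationary measure, and it carries no further dependence on the initial datum beyond its joint law with $\tilde u$. I therefore expect the conditional exponential moment $\EE e^{q\widetilde{\cal X}_t}$ to equal $e^{tq(q-1)\|V\|^2_{L^2(\bT)}/2}$ for $\nu$-a.e. $u$, consistent with \eqref{051605-26}. With this, the factor is exactly $e^{tp(q-1)\|V\|^2_{L^2(\bT)}/2}$. The remaining term $\int\EE|F(\tilde u(t;u))|^p\nu(du)$ equals $\|F\|^p_{L^p(\nu)}$ by stationarity of $\nu$ under \eqref{tSBEE1}. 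Taking $p$-th roots gives \eqref{022808-25a}.

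If the conditional identity is unavailable, the fallback is the duality bound \eqref{031605-26}. For $G\in L^{q}(\nu)$, I would write $\int G\,{\cal P}_t^VF\,d\nu$ via the Girsanov formula and apply H\"older on $\nu\otimes\PP$ with three factors. Stationarity then controls the $F$ and $G$ factors, and the Gaussian exponential moment \eqref{051605-26} controls the weight. Optimizing the exponents should reproduce the stated constant.
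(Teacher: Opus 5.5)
Your argument depends on the claim that $\EE\exp\{q\widetilde{\cal X}_t(V,u)\}=\exp\{tq(q-1)\|V\|^2_{L^2(\bT)}/2\}$ for $\nu$-a.e.\ $u$. That claim is false.

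The identity \eqref{021605-26} holds on the stationary space, where $W$ is the \emph{forward} noise. Fixing $\tilde u(0)=u$ means conditioning on the forward endpoint $u(t)$, and $u(t)$ is correlated with $W$ on $[0,t]$. You can see this directly: for fixed $u$, the term $\int_0^t\la \tilde u(s;u),\nabla V\ra ds$ in \eqref{041605-26} depends on $u$. Only the annealed identity \eqref{051605-26}, integrated against $\nu(du)$, is available.

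The pointwise identity would in fact lead to a contradiction. Take \eqref{021804-26} with $F\equiv 1$ and apply Lyapunov's inequality: you would get ${\cal P}_t^V1(u)=\EE e^{\widetilde{\cal X}_t(V,u)}\le e^{t(q-1)\|V\|^2_{L^2(\bT)}/2}$ for every $q>1$. Hence ${\cal P}_t^V1\le 1$, and since it has $\nu$-integral one, ${\cal P}_t^V1\equiv 1$. So $\nu$ would be invariant for the perturbed flow, which fails for $V\neq 0$. Indeed, if all marginals were $\nu$, the drifts $\tfrac12\Delta u$ and $\tfrac12\nabla u^2$ would have zero mean, so $\EE\la u_V(t)-u_V(0),\varphi\ra=t\la \nabla V,\varphi\ra$ would have to vanish for all $\varphi$ (compare the forcing term in Lemma~\ref{l.l1aN}).

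Your fallback does not close the gap either. Three-factor H\"older on $\nu\otimes\PP$ can pair $\|F\|_{L^{r_1}}$ with $\|G\|_{L^{r_2}}$ only when $1/r_1+1/r_2<1$, because the Girsanov density is unbounded. So you never reach the dual pair $(p,q)$; you only get bounds with a loss of integrability, of the type \eqref{031605-26}.

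The missing idea is to keep $F$ and the weight inside a single expectation:
\begin{itemize}
\item Apply Jensen, $|\EE Y|^p\le \EE|Y|^p$, to $Y=F(\tilde u(t;u))e^{\widetilde{\cal X}_t(V,u)}$ and integrate in $\nu(du)$.
\item Undo the time reversal via \eqref{021605-26}, exactly as in the proof of \eqref{051605-26}. This gives $\|{\cal P}_t^VF\|^p_{L^p(\nu)}\le \EE_\nu\big[|F(u(0))|^p\exp\{p\la W(t),V\ra-\tfrac{pt}{2}\|V\|^2_{L^2(\bT)}\}\big]$.
\item In the forward picture $u(0)$ is independent of $W$, so the right side equals $\|F\|^p_{L^p(\nu)}e^{p(p-1)t\|V\|^2_{L^2(\bT)}/2}$.
\end{itemize}
Equivalently, ${\cal P}_t^VF(u(t))$ is the conditional expectation, given $u(t)$, of $F(u(0))$ times the forward Girsanov density, and you can apply conditional Jensen.

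The constant this produces is $(p-1)$, not $(q-1)$. It therefore proves \eqref{022808-25a} only for $p\in(1,2]$, where $p-1\le q-1$.

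For $p>2$ you should not try to recover the stated constant. Apply \eqref{022808-25a} to $F\equiv 1$ and let $p\to\infty$, so that $q-1\to 0$. It would force $\|{\cal P}_t^V1\|_{L^\infty(\nu)}\le 1$, and hence again ${\cal P}_t^V1\equiv 1$, so the inequality cannot hold for all $p\in(1,\infty)$.

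The paper gives only \emph{direct corollary of Lemma~\ref{thm012104-26}} as justification. The exponent $(q-1)$ suggests it relies on the same pointwise reading of \eqref{051605-26} as your main route. The statement you can actually prove has $e^{(p-1)t\|V\|^2_{L^2(\bT)}/2}$ in place of $e^{(q-1)t\|V\|^2_{L^2(\bT)}/2}$.
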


\subsection{The proof of Theorem \ref{cor022404-26}}

\label{secL1sb}

Now we are ready to prove the main result.  Throughout the section, denote 
\[
f_N^{(V)}(t,u):=\Q^{(N)}_V(t,u;
 f_N)
 \]  the density of $u_{N,V}(t)$ with respect to $\nu_S^{(N)}$, with $u_{N,V}(0)$ distributed according to $f_N(u)\nu_S^{(N)}(du)$.

\subsubsection{Existence of an invariant density and bounds on the relative entropy}

\label{L2s}

The first step is to send $N\to\infty$ in the entropy bound obtained in Theorem~\ref{thm081704}. 

Recall that $
\big(u_{N,V}(t)\big)$ is  the discretization constructed in
Section \ref{discr}, and $u^{(\eps)}_V$ is the solution to  \eqref{eq:SBEV}. 
Assume that $G\in D(\nu_\eps)$ is the density of $\mu^{(\eps)}_V$ -
the law of $u^{(\eps)}_V(0)$ -
w.r.t. $\nu_\eps$. 
Suppose furthermore that  both $G$ and $1/G$ belong
to  $C_b\big(C(\bT)\big)$. Let
\begin{equation}
  \label{iG}
  \iota(G):=\sup_{u\in C(\bT)}\Big\{G(u),\frac{1}{G(u)}\Big\}.
\end{equation}
We have embedded  $\Om_N\subset C(\bT)$, so the restriction of $G_{|\Om_N}$ belongs to
$C_b(\Om_N)$.

{We choose the initial distribution $u_{N,V}(0)$ to be $ f_N(u)\nu_S^{(N)}(du)$ with
$
f_N(u)= G_{|_{\Omega_N}}/Z_N
$, where $Z_N$ is the normalization factor.}
%
 {It is clear that $f_N\leq \iota(G)^2$ and $Z_N\to1$, as $N\to\infty$.} 



The relative entropy of the law of $u_{N,V}(t)$ with respect to $\nu_S^{(N)}$  is 
 \[
 {\mathbf{H}}_N^{(V)}(t)=\int_{\Omega_N} f_N^{(V)}(t,u)\log f_N^{(V)}(t,u)\nu_S^{(N)}(du).
 \]
 By the fact of $f_N\leq \iota(G)^2$, we have the initial entropy is uniformly bounded: 
\begin{equation}
  \label{muN01}
{{\mathbf{H}}_N^{(V)}(0)  \le 2\log \iota(G)},\quad N=1,2,\ldots.
\end{equation}

According to \cite[Lemma 1.4.3 (p. 29)]{DE}  we have
\[\begin{aligned}
  \label{031304-26}
  {\mathbf{H}}_N^{(V)}(t)=\sup_{F\in C_b(\Om_N)}\Big\{&\int_{\Om_N}F(u) f_N^{(V)}(t,u)\nu_S^{(N)}(du)\\
 &-\log\Big(\int_{\Om_N}e^{F(u)}\nu_S^{(N)}(du)\Big)\Big\}.
\end{aligned}\]

Using \eqref{muN01} together with \eqref{031304-26fz} (see
\eqref{entropy-estN}), we conclude
that for any $F\in C_b(\Om_N)$  and $\beta{\frak g}_N>1$    \begin{align}
  \label{031304-26ff}
    &\EE_{\mu_N} \Big[ F\big(u_{N,V}(t)\big)
      \Big]-\log\Big(\int_{C(\bT)}e^{F(u)}\nu_S^{(N)}(du)\Big)\le {2\log \iota(G)}\exp\left\{-({\frak
    g}_N\beta-1)\frac{t}{\beta}\right\}  
      \\
      &+  \frac{ \beta^2 }{2 ({\frak
    g}_N\beta-1)}   \Big[1-\exp\left\{-({\frak
    g}_N\beta-1)\frac{t}{\beta}\right\}\Big] \Big[\frac{1}{N} \sum_{j\in\bT_N}  (N\nabla
V^{(\al_2)})\Big(\frac{j}{N}\Big) (N\nabla
 V )\Big(\frac{j}{N}\Big)\Big]\notag
    \end{align}
    for any  $  N=1,2,\ldots$ and $t\ge0$, with $\lim_{N\to+\infty} {\frak
  g}_N=8\pi^2$. 
    
   By virtue of Proposition \ref{prop011004-26}, letting   $N\to+\infty$, we
conclude  that for any $\beta>1/(8\pi^2)$ and    $F\in C_b\big(C(\bT)\big)$
 \begin{align}
  \label{031304-263f}
    &\EE\Big[ F\big(u_V^{(\eps)} (t)\big) G\big(u_V^{(\eps)} (0)\big)
      \Big]-\log\Big(\int_{C(\bT)}e^{F(u)}\nu_\eps(du)\Big)\le \mm{2}\log \iota(G)\exp\left\{-(8\pi^2\beta-1)\frac{t}{\beta}\right\}  
      \\
      &+  \frac{ \beta^2}{ 16\pi^2\beta-2}
        \Big[1-\exp\left\{-(8\pi^2\beta-1)\frac{t}{\beta}\right\}
        \Big] \langle R_\eps \nabla V,
   \nabla  V\rangle .  \notag
    \end{align}

The next step is to pass to the limit of $\eps\to0$ in \eqref{031304-263f}. 
    Suppose now that $G\in D(\nu)\cap C_b(\rH)$ satisfying
    $
  \iota(G)=\sup_{{u\in \rH}}\Big\{G(u),\frac{1}{G(u)}\Big\}<+\infty.
$
Then  $G_\eps=\frac{G}{\int_{C(\bT)}Gd\nu_\eps}$ is bounded from above and below by some positive constants,  and 
$\lim_{\eps\to0+} \|G_\eps-G\|_{\infty}=0$.
Using the Cameron-Martin-Girsanov change of
 measure formula,  
       Proposition \ref{prop012404-26}, {and a straightforward check of uniform integrability, we conclude for any $F\in C_b(\rH)$ that } 
      \begin{equation}
        \label{041805-26}
        \begin{split}
          &\lim_{\eps\to0}\EE_{\mu_V }\Big[ F\big(u_V^{(\eps)} (t)\big) G_\eps\big(u_V^{(\eps)} (0)\big)
          \Big]\\
          &=\lim_{\eps\to0}\EE \Big[ F\big(u^{(\eps)} (t)\big) G_\eps\big(u^{(\eps)} (0)\big)
        \exp\Big\{  \la \eta^{(\eps)}\star W(t),V\ra-\frac
{t}2
{\la R_\eps\star V, V\ra}\Big\} \Big]\\
&= \EE \Big[ F\big(u (t)\big) G\big(u  (0)\big)
        \exp\Big\{  \la   W(t),V\ra-\frac
{t}2
\|V\|_{L^2(\bT)}^2\Big\} \Big]=\int_{\rH}({\cal P}_t^VG) Fd\nu.
\end{split}
        \end{equation}
        As a result, using \eqref{031304-263f} with $G$ replaced by $G_\eps$ and then
        letting $\eps\to0$, we get
                 \begin{align}
  \label{011805-26}
   & \int_{\rH}({\cal P}_t^VG) Fd\nu-\log\Big(\int_{\rH}e^{F(u)}\nu(du)\Big)\le 
     2\log \iota(G)\exp\left\{-(8\pi^2\beta-1)\frac{t}{\beta}\right\}  \notag
      \\
      &+   \frac{ \beta^2}{ 16\pi^2\beta-2}
        \Big[1-\exp\left\{-(8\pi^2\beta-1)\frac{t}{\beta}\right\}
        \Big] {\la \nabla V,\nabla V\ra}.  
        \end{align}
        Taking the supremum over $F\in C_b(\rH)$ we conclude that the
        relative entropy of the law of $u_V(t)$ with respect to the white noise measure $\nu$ satisfies
\begin{align}
  \label{011805-26}
   & {\mathbf{H}}_{V}(t;G )=\int_{\rH}{\cal P}_{t}^VG\log {\cal P}_t^VG \,d\nu\le 2 \log \iota(G)\exp\left\{-(8\pi^2\beta-1)\frac{t}{\beta}\right\}  \notag
      \\
      &+   \frac{ \beta^2}{ 16\pi^2\beta-2}
        \Big[1-\exp\left\{-(8\pi^2\beta-1)\frac{t}{\beta}\right\}
        \Big] \la \nabla V,\nabla V\ra .  
    \end{align}

  Given $C>0$, define
  $$
{\cal D}(C):=\big[G\in D(\nu): {\mathbf{H}}(G )\le C  \big],
$$
where
$
{\mathbf{H}} (G)=\int_{\rH} G\log  G d\nu.
$
The set is convex and closed in the strong topology in $L^1(\nu)$.
By the Dunford-Pettis
criterion, it is also 
weakly compact in $L^1(\nu)$.
  Suppose that
  \begin{equation}
    \label{c-s}
    {{\frak C}_V(\beta):=\frac{ \beta^2}{ 16\pi^2\beta-2}\la \nabla V,\nabla V\ra}
        .
  \end{equation}
 Using the density argument, {the contraction property of ${\cal P}_t^V$,} together with \eqref{011805-26}, we
 conclude that for any $F\in D(\nu)$, 
   \begin{equation}
    \label{062404-26}
    \lim_{t\to+\infty}{\rm dist}\Big({\cal P}_t^VF, {\cal
      D}\big({\frak C}_V(\beta)\big)\Big)=0,
  \end{equation}
  where ${\rm dist}(F,S)$ is the infimum of $\|F-G\|_{L^1(\nu)}$ over $G\in S$.

  \medskip

From \eqref{062404-26}
and \cite[Theorem 1.1]{komornik} we conclude that there exist
$G_1,\ldots,G_d\in D(\nu)$, that are disjointly supported,  and $H_1,\ldots,H_d\in L^\infty(\nu)$ such that
\begin{itemize}
\item[i)] ${\cal P}_1^VG_i=G_{\alpha(i)}$, where
  $\alpha$ is a permutation of $\{1,\ldots,d\}$,
\item[ii)] for any $F\in L^1(\nu)$ we have
   \begin{equation}
    \label{nn072404-26}
    \lim_{n\to+\infty} \|{\cal P}_n^VF-\sum_{j=1}^dG_j\int_{\rH}H_jFd\nu\|_{L^1(\nu)}=0 .
  \end{equation}
  \end{itemize}
Asymptotic stability, as stated in Theorem \ref{thm012504-26}, implies
that $d=1$. Hence, we have shown the existence of  density  $\bar F_V$
as in \eqref{012704-26} that satisfies also \eqref{rel-ent} and \eqref{n072404-26}.

\subsubsection{The proof of the $L^p$ integrability}

 Using \eqref{051704-26z} with $f_N^{(V)}(0)\equiv 1$ (starting from $\nu_S^{(N)}$), we conclude that 
  for any $p\ge1$, if $\|V\|_{L^2(\bT)}\leq \tfrac{1}{100p}$, there exist $N_0>0$ and   ${\frak C}_{*,j} $, $j=1,2$
depending only $p$, $\|V\|_{H^1(\bT)}$ (in this case ${\frak Q}(p)\equiv 1$), such that 
\begin{equation}
  \label{051704-26z2}
\int_{\Om_N}\big(f_N^{(V)}(t,u)\big)^p\nu_S^{(N)}(du)\le {\frak C}_{*,1} 
\exp\left\{-\lambda(p)t\right\}+{\frak C}_{*,2} 
,\quad t\ge0,\,N\ge N_0.
\end{equation}
Since
\[
\begin{aligned}
  \label{061805-26}
  &\frac{1}{p}\int_{\Om_N}\big(f_N^{(V)}(t,u)\big)^p\nu_S^{(N)}(du)\\
  &=\sup_{F\in C_b(\Om_N)}\Big\{\EE_{\mm{\nu_S^{(N)}}} \Big[ F\big(u_{N,V}(t)\big)
      \Big]-\frac1q \int_{\Omega_N}|F(u)|^q\nu_S^{(N)}(du)\Big\},
\end{aligned}
\]
letting first $N\to+\infty$ and then $\eps\to0$, analogously as it
has been done in Section \ref{L2s}, we conclude from
\eqref{061805-26} that
\begin{equation}
  \label{071805-26}
\int_{\rH}\big({\cal P}_t^{V}1\big)^pd\nu\le {\frak C}_{*,1} 
\exp\left\{-\lambda(p)t\right\}+{\frak C}_{*,2} 
,\quad t\ge0.
\end{equation}
From the previous discussion, we already know that $\lim_{t\to+\infty}\|{\cal P}_t^{V}1-\bar
F_V\|_{L^1(\nu)}=0$. Using the Fatou lemma for some subsequence
$t_n\to+\infty$ we conclude therefore that $\bar F_V\in
L^p(\nu)$. This ends the proof of Theorem \ref{cor022404-26}.\qed

\appendix

\section{Functional inequalities for a multi-dimensional O-U process}

\subsection{Log Sobolev inequality}

\label{AppA}
   We assume
  that $N\times N$ matrix $B$  is symmetric and {non-negative} definite.
Suppose futhermore that    $\Sigma$ is $N\times N$ matrix 
 such that
 \begin{equation}
   \label{B-S}
B\Sigma=\Sigma B\quad\mbox{and}\quad B\Sigma^T=\Sigma^T B.
\end{equation}
To abbreviate we denote $A= \Sigma\Sigma^T$ and assume that there exists some unit vector $\phi_0\in \R^N$ such that    \begin{equation}
    \label{ker}
    {\rm ker}\,B={\rm ker}\, A={\rm span}\,({\rm
      \phi}_0). 
  \end{equation}
  Let
 $\nu_{B,A}(dx)=\Phi_{B,A} (x)\delta_0\big(\la x,\phi_0\ra\big)
dx$, where 
\begin{equation}
 \label{FBA0}
 \begin{split}
 &\Phi_{B,A}=\frac{1}{Z_{B,A}}\exp\left\{-A^{-1}Bx\cdot x\right\},\\
 &
 Z_{B,A}= \int_{\bbR^N}\exp\left\{-A^{-1}Bx\cdot x\right\}\delta_0\big(\la x,\phi_0\ra\big)dx.
 \end{split}
 \end{equation}{Here $A^{-1}Bx$ is well-defined for all $x\in\R^N$. To see why this is true, we  note that $AB=BA$ so the two matrices can be diagonalized simultaneously then it is enough to conclude using  \eqref{ker}.}
 Define the Dirichlet form  
\begin{equation}
  \label{DFA0}
\begin{split}
&{\cal
  E}_{B,A}
(f)= \frac12\int_{{\cal X}_N}ADf(x)\cdot Df(x) \nu_{B,A}(dx)
\end{split}
\end{equation}
for all $f \in{\cal D}\big( {\cal
  E}_{B,A}\big)$ - the set made of such $f$ for which   $Df$ exists a.e. and the integral is
finite.   In
all other cases we let ${\cal
  E}_{B,A}
(f)=+\infty$. Here the space
 \begin{equation}
   \label{XN}
   {\cal X}_N:=\big[x=(x_1,\ldots,x_N)\in\bbR^N:\,\la x,\phi_0\ra=0\big].
 \end{equation}
 
The following result follows from \cite[Section 5.5.1]{bakry}.
\begin{lemma}
  \label{lm013103-26aa}
    We have
    \begin{equation}
      \label{043103-26}
              {\cal
         E}_{B,A} (f)\ge\lam_B
         \int_{{\cal X}_N}
   f^2(   x )\log\Big(\frac{   f^2(    x)}{ \int_{{\cal X}_N}
    f^2(  y)
      \nu_{B,A}(dy)}\Big)
   \nu_{B,A}(dx) 
   \end{equation}
   for all $f\in L^2(\nu_{B,A})$, where
 \begin{equation}
   \label{lamB}
       \lam_B:=   \inf_{|\xi|=1,\,\la\xi,\phi_0\ra=0}\la
       B\xi,\xi\ra.
       \end{equation}
\end{lemma}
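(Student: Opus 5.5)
The plan is a Bakry--Émery ($\Gamma_2$) argument for the Ornstein--Uhlenbeck generator whose Dirichlet form is ${\cal E}_{B,A}$, carried out on ${\cal X}_N$. The one real difficulty is matching the stated constant $\lam_B$; as explained at the end, the check I propose suggests the stated constant may be off by a factor of $2$ under the paper's normalisations.

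\textbf{Step 1: diagonalise and reduce to ${\cal X}_N$.} By \eqref{B-S}, the symmetric matrices $A=\Sigma\Sigma^T$ and $B$ commute: $AB=\Sigma\Sigma^TB=\Sigma B\Sigma^T=B\Sigma\Sigma^T=BA$. So they share an orthonormal eigenbasis. By \eqref{ker}, $\phi_0$ is a common null vector, and on ${\cal X}_N=\phi_0^\perp$ both $A$ and $B$ are positive definite. Consequently $M:=A^{-1}B$ is a well-defined symmetric positive operator on ${\cal X}_N$, and $\nu_{B,A}$ is the centred Gaussian on ${\cal X}_N$ with covariance $(2M)^{-1}$. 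Since ${\cal E}_{B,A}$ only involves $Df$ through $ADf\in{\cal X}_N$, I may regard $f$ as a function on ${\cal X}_N$ and work entirely in the eigenbasis.

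\textbf{Step 2: generator and carré du champ.} Integrating by parts against $\Phi_{B,A}$ and using $AM=B$, the operator $L$ with $\int Lf\,g\,d\nu_{B,A}=-{\cal E}_{B,A}(f,g)$ is
\[
Lf=\tfrac12 A:D^2f-Bx\cdot Df ,
\]
whose carré du champ is $\Gamma(f)=\tfrac12 ADf\cdot Df$.

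\textbf{Step 3: curvature bound.} In the joint eigenbasis, write $a_k,b_k>0$ for the eigenvalues of $A,B$ on ${\cal X}_N$. The drift $-Bx$ has constant Jacobian $-B$, so the $\Gamma_2$ computation gives
\[
\Gamma_2(f)=\tfrac14\|A^{1/2}D^2fA^{1/2}\|^2+\tfrac12\sum_k a_kb_k(\partial_kf)^2\ \ge\ \lam_B\,\Gamma(f),
\]
using $b_k\ge\lam_B$ from \eqref{lamB}. Hence $L$ satisfies the curvature--dimension condition $CD(\lam_B,\infty)$. The general theorem of \cite[Section 5.5.1]{bakry}, namely that $CD(\rho,\infty)$ implies a log-Sobolev inequality, then yields an inequality of the form \eqref{043103-26}. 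Density in $L^2(\nu_{B,A})$ handles general $f$, since ${\cal E}_{B,A}(f)=+\infty$ outside the domain.

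\textbf{Main obstacle: the constant.} With Bakry's normalisation, $CD(\rho,\infty)$ gives ${\rm Ent}(f^2)\le\frac{2}{\rho}\int\Gamma(f)\,d\nu$. Taking $\rho=\lam_B$ and $\int\Gamma(f)\,d\nu_{B,A}={\cal E}_{B,A}(f)$, this produces ${\cal E}_{B,A}(f)\ge\frac{\lam_B}{2}{\rm Ent}(f^2)$, not the stated $\lam_B$. The first thing I would do is the one-dimensional check $A=1$, $B=b$. There $\nu\propto e^{-bx^2}$ has variance $1/(2b)$, and the sharp Gaussian log-Sobolev inequality reads ${\rm Ent}(f^2)\le\frac1b\int f'^2\,d\nu$, i.e. ${\cal E}(f)=\tfrac12\int f'^2\,d\nu\ge\frac b2{\rm Ent}(f^2)$. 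This is exactly the $\lam_B/2$ form and is optimal. So to reach \eqref{043103-26} literally, the exponent in \eqref{FBA0} would have to be $-\tfrac12A^{-1}Bx\cdot x$, or ${\cal E}_{B,A}$ would have to carry no factor $\tfrac12$. I would settle which convention is intended and trace which one is actually used when the lemma is applied in \eqref{011304-26}; the $\Gamma_2$ computation above is otherwise unchanged.
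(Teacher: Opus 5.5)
Your argument is the paper's own route. The paper gives nothing beyond citing the Bakry--\'Emery criterion of \cite[Section 5.5.1]{bakry}, and your Steps 1--3 (reduction to $\mathcal{X}_N$, $Lf=\frac12A:D^2f-Bx\cdot Df$, $\Gamma_2\ge\lambda_B\Gamma$) supply exactly the hypothesis $CD(\lambda_B,\infty)$ that this citation needs.

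Your concern about the constant is justified, and you should state it without hedging. With the normalisations of \eqref{FBA0} and of $\mathcal{E}_{B,A}$, inequality \eqref{043103-26} is false. What the argument proves, sharply, is $\mathcal{E}_{B,A}(f)\ge\frac{\lambda_B}{2}\,\mathrm{Ent}_{\nu_{B,A}}(f^2)$. To see the failure, use that $A$ and $B$ commute and pick a unit $e\in\mathcal{X}_N$ with $Be=\lambda_Be$ and $Ae=ae$, $a>0$. Under $\nu_{B,A}$ the coordinate $\langle x,e\rangle$ is centred Gaussian with variance $a/(2\lambda_B)$. So $f(x)=e^{c\langle x,e\rangle}$ with $c\neq0$ gives
\[
\mathrm{Ent}_{\nu_{B,A}}(f^2)=\frac{ac^2}{\lambda_B}\int f^2\,d\nu_{B,A},\qquad \mathcal{E}_{B,A}(f)=\frac{ac^2}{2}\int f^2\,d\nu_{B,A}=\frac{\lambda_B}{2}\,\mathrm{Ent}_{\nu_{B,A}}(f^2),
\]
which contradicts \eqref{043103-26}. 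Equivalently, linearising \eqref{043103-26} at $f\equiv1$ would give a Poincar\'e constant $2\lambda_B$. But \eqref{021105-26} is attained by $\langle x,e\rangle$ with constant $\lambda_B$. The paper's own \eqref{ei0}, $\frac{d}{dt}H^{(B,A)}_t\le-2\lambda_BH^{(B,A)}_t$, follows from $\frac{d}{dt}H^{(B,A)}_t=-4\mathcal{E}_{B,A}(\sqrt{f_t})$ only with the corrected constant $\lambda_B/2$.

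Two points in your last paragraph need fixing.

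\textbf{The proposed exponent change goes the wrong way.} Replacing the exponent by $-\frac12A^{-1}Bx\cdot x$ doubles the variance and yields $\lambda_B/4$, not $\lambda_B$. Making \eqref{043103-26} literally true would need the exponent $-2A^{-1}Bx\cdot x$. Your other option, dropping the $\frac12$ in $\mathcal{E}_{B,A}$, does work.

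\textbf{No change of convention is available where the lemma is used.} In \eqref{011304-26} one has $B=-\frac12N^2\Delta$ and $A=-N^3\Delta\alpha_2$. Hence $A^{-1}B=\frac12\mathcal{C}_N^{-1}$ on $\Omega_N$ and $\nu_{B,A}=\nu_S^{(N)}$ exactly. The left-hand side is $\mathcal{E}_{B,A}\big(\sqrt{\mathcal{Q}^{(N)}_V}\big)$, with the factor $\frac12$.

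The correct repair is therefore to replace $\lambda_B$ by $\lambda_B/2$ in the lemma. This halves the rate in \eqref{entropy-estN}: $\mathfrak{g}_N=4N^2\sin^2(\pi/N)\to4\pi^2$ instead of $8\pi^2$. That matches the continuum, where the slowest mode of $\frac12\Delta$ relaxes at rate $2\pi^2$ and relative entropy at twice that. Downstream this changes only numerical constants (e.g.\ $\beta>1/(4\pi^2)$ and $\mathfrak{C}_V(\beta)$), not the finiteness of the entropy.
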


Consider the process $X_t=(X_t^{(1)},\ldots,X_t^{(N)})$ on $\bbR^N$ given by
\begin{equation}
  \label{1do-um}
dX_t=-BX_tdt+\Sigma dW_t,
\end{equation}
with $W_t=(w_t^{(1)},\ldots,w^{(n)}_t)$ and $w_t^{(j)}$ are
i.i.d. standard,  one dimensional Brownian motion.
One can see that then
  \begin{equation}
    \label{X0}
   d\la X_t,\phi_0\ra\equiv 0,\quad t\ge0.
 \end{equation}
 In this case the process $(X_t)$ satisfying if $X_0\in {\cal X}_N$ then $X_t$ stays
 in $  {\cal X}_N$. 
We have
\begin{equation}
  \label{DFA0}
\begin{split}
&{\cal
  E}_{B,A}^{(0)}
(f)=-\frac{d}{dt}_{|t=0}\EE\Big[f(X_t)f(X_0)\Big] ,\quad f\in {\cal D}\big( {\cal
  E}_{B,A}\big).
\end{split}
\end{equation}
  Let
$
H_t^{(B, A)}:=H(f_t|\nu_{B,A}),
$ where $f_t$ is the density of the distribution of $X_t$, solving
\eqref{1do-um}, w.r.t. $\nu_{B,A}$. After a straightforward
calculation we obtain
\begin{align*}
  &\frac{dH_t^{(B, A)}}{dt}
    =-4{\cal E}_{B,A}(\sqrt{f_t}).
\end{align*}
Hence, by \eqref{043103-26} (remembering that
$\int_{\bbR^N}f_td\nu_{B,A}=1$), we conclude the following  inequality
\begin{equation}
  \label{ei0}
  \begin{split}
    &\frac{dH_t^{(B,A)}}{dt}\le-
    2\lam_BH_t^{(B,A)} . 
  \end{split}
\end{equation}

  \subsection{Spectral gap and related estimates}

  \label{AppA1}

  The following estimate is a direct consequence of  \cite[Proposition 4.1.1]{bakry}.
   \begin{equation}
     \label{021105-26}
     \begin{split}
\int_{{\cal X}_N}\Big(g(x)&-\int_{{\cal X}_N} g(x') \nu_{B,A} (dx')\Big)^2 \nu_{B,A} (dx)
\\
&
\le
\frac{ 1}{2\lambda_B }
\int_{{\cal X}_N} 
A Dg(x)\cdot Dg(x)\,
\nu_{B,A} (dx),\quad g\in L^2(\nu_{B,A} ).
\end{split}
\end{equation}

 \begin{proposition}
   \label{prop021704-26}
Assume that \eqref{ker}.
Then, for every ${\bf a}\in\mathbb R^N$ and every $f\in L^2(\nu_{B,A} )$ such that
${{\cal E}_{B,A}(f)}<\infty$,
\[
\left|\int_{{\cal X}_N} ({\bf a}\cdot x)\,f^2(x)\,\nu_{B,A} (dx)\right|
\le
\sqrt{2}\,\big|A_0^{1/2}B_0^{-1}P_0{\bf a}\big|\,
\|f\|_{L^2(\nu_{B,A} )}\,
\sqrt{{\cal E} _{B,A}(f)},
\]
where $P_0=I-\phi_0\otimes\phi_0$ is the orthogonal projection onto ${\cal X}_N$, and
$A_0,B_0$ denote the restrictions of $A,B$ to ${\cal X}_N$.
\end{proposition}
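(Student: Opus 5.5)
The plan is to prove the bound by Gaussian integration by parts for $\nu_{B,A}$, followed by a single Cauchy--Schwarz step that is weighted by $A$.

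\textbf{Step 1: covariance.} Restricted to ${\cal X}_N$, the measure $\nu_{B,A}$ is the centered Gaussian with density proportional to $\exp\{-A_0^{-1}B_0x\cdot x\}$. By \eqref{B-S}, $A$ and $B$ commute, so they can be diagonalized simultaneously. By \eqref{ker}, both $A_0$ and $B_0$ are invertible on ${\cal X}_N$. Hence the covariance operator of $\nu_{B,A}$ on ${\cal X}_N$ is
\[
C_0=\tfrac12 A_0B_0^{-1}=\tfrac12 B_0^{-1}A_0 .
\]
Next, write ${\bf a}\cdot x={\bf a}_0\cdot x$ for $x\in{\cal X}_N$, where ${\bf a}_0:=P_0{\bf a}$. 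For smooth, bounded $g$ with bounded derivatives, the Gaussian integration by parts formula on ${\cal X}_N$ then gives
\[
\int_{{\cal X}_N}({\bf a}_0\cdot x)\,g(x)\,\nu_{B,A}(dx)=\int_{{\cal X}_N}(C_0{\bf a}_0)\cdot Dg(x)\,\nu_{B,A}(dx).
\]
Here only the tangential part of $Dg$ enters, since $C_0{\bf a}_0\in{\cal X}_N$. We apply this with $g=f^2$, which gives $Dg=2fDf$.

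\textbf{Step 2: weighted Cauchy--Schwarz.} We split the integrand as
\[
(C_0{\bf a}_0)\cdot Df=\big(A_0^{-1/2}C_0{\bf a}_0\big)\cdot\big(A_0^{1/2}P_0Df\big).
\]
Commutativity gives $A_0^{-1/2}C_0{\bf a}_0=\tfrac12A_0^{1/2}B_0^{-1}{\bf a}_0$. We also have $|A_0^{1/2}P_0Df|^2=ADf\cdot Df$, because $A\phi_0=0$. Cauchy--Schwarz in $x$ and then in $L^2(\nu_{B,A})$ yields
\[
\Big|\int({\bf a}\cdot x)f^2\,d\nu_{B,A}\Big|\le 2\cdot\tfrac12\big|A_0^{1/2}B_0^{-1}P_0{\bf a}\big|\,\|f\|_{L^2(\nu_{B,A})}\Big(\int ADf\cdot Df\,d\nu_{B,A}\Big)^{1/2}.
\]
By \eqref{DFA0}, the last factor equals $\sqrt{2{\cal E}_{B,A}(f)}$. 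This gives exactly the constant $\sqrt2$ in the statement.

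\textbf{Step 3: approximation (main obstacle).} It remains to remove the smoothness assumption, so that the identity holds for general $f\in L^2(\nu_{B,A})$ with ${\cal E}_{B,A}(f)<\infty$. This is the only technical point. We approximate $f$ by smooth compactly supported functions $f_n$ such that $f_n\to f$ in $L^2(\nu_{B,A})$ and ${\cal E}_{B,A}(f_n)\to{\cal E}_{B,A}(f)$. Such an approximation exists because the form is a standard weighted $H^1$ form for a nondegenerate Gaussian on ${\cal X}_N$. The right-hand side of the inequality passes to the limit directly. For the left-hand side, we use the spectral gap \eqref{021105-26} together with Gaussian hypercontractivity, or simply Fatou's lemma applied to the positive and negative parts of ${\bf a}\cdot x$ after truncating $|x|\le M$. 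This shows that $({\bf a}\cdot x)f^2$ is integrable and that the integrals converge. Alternatively, the proposition is only used in Lemma \ref{l.bdGNP} with $f=\tilde\Q_{p/2,N}$, which is smooth by hypoellipticity of the Fokker--Planck equation. So for the paper's purposes it also suffices to prove the bound for smooth $f$ with Gaussian-integrable derivatives, where Steps 1 and 2 apply verbatim.
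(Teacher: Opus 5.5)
Your proof is correct and is essentially the paper's: the paper writes $(P_0{\bf a}\cdot x)\Phi_{B,A}=-\tfrac12\,(B_0^{-1}A_0P_0{\bf a})\cdot\nabla\Phi_{B,A}$ and integrates by parts, which is exactly your Gaussian integration by parts with covariance $\tfrac12 B_0^{-1}A_0$, and then uses the same $A_0^{1/2}$-weighted Cauchy--Schwarz to get the constant $\sqrt2$. Your Step 3 is extra care the paper omits, since the paper integrates by parts only formally; if you keep it, Fatou's lemma alone cannot pass to the limit in the signed integral, so the tail control should come from a bound $\int|x|^2f^2\,d\nu_{B,A}\le C\big(\|f\|^2_{L^2(\nu_{B,A})}+\mathcal{E}_{B,A}(f)\big)$, which follows from the same integration by parts (or from your log-Sobolev/hypercontractivity route).
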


\begin{proof}
Since $x\in{\cal X}_N$, we have
\[
{\bf a}\cdot x = (P_0{\bf a})\cdot x.
\]
Set
\[
C_0:=A_0^{-1}B_0.
\]
Then $C_0$ is symmetric positive definite on ${\cal X}_N$, and the invariant
measure can be written as
\[
\nu_{B,A}(dx)
=
\frac{1}{Z_{B,A}} e^{-C_0 x\cdot x}\,dx_{{\cal X}_N}.
\]
Hence
\[
\nabla \Phi_{B,A}(x)=-2C_0 x\,\Phi_{B,A}(x),
\]
and therefore
\[
(P_0{\bf a}\cdot x)\,\Phi_{B,A}(x)
=
-\frac12 (C_0^{-1}P_0{\bf a})\cdot \nabla \Phi_{B,A}(x).
\]
Integrating by parts over ${\cal X}_N$, we obtain
\begin{align*}
\int_{{\cal X}_N} ({\bf a}\cdot x) f^2(x)\,\nu_{B,A}(dx)
&=
-\frac12 \int_{{\cal X}_N} f^2(x)\,(C_0^{-1}P_0{\bf a})\cdot \nabla \Phi_{B,A}(x)\,dx \\
&=
\frac12 \int_{{\cal X}_N} (C_0^{-1}P_0{\bf a})\cdot \nabla(f^2)(x)\,\nu_{B,A}(dx) \\
&=
\int_{{\cal X}_N} f(x)\,(C_0^{-1}P_0{\bf a}\cdot \nabla f(x))\,\nu_{B,A}(dx).
\end{align*}
Using $C_0^{-1}=B_0^{-1}A_0$ and applying Cauchy--Schwarz,
\begin{align*}
\left|\int_{{\cal X}_N} ({\bf a}\cdot x) f^2(x)\,\nu_{B,A}(dx)\right|
&\le
\int_{{\cal X}_N}
|f(x)|\,|(C_0^{-1}P_0{\bf a})\cdot \nabla f(x)|\,\nu_{B,A}(dx) \\
&\le
|A_0^{{1/2}}{B_0}^{-1}P_0{\bf a}|\,
\|f\|_{L^2(\nu_{B,A})}
\left(\int_{{\cal X}_N} A_0\nabla f(x)\cdot \nabla f(x)\,\nu_{B,A}(dx)\right)^{1/2}.
\end{align*}
Since
\[
{\cal E}_{B,A}(f)
=
\frac12\int_{{\cal X}_N} A_0\nabla f\cdot \nabla f\,d\nu_{B,A},
\]
we conclude
\[
\left|\int_{{\cal X}_N} ({\bf a}\cdot x) f^2(x)\,\nu_{B,A}(dx)\right|
\le
\sqrt{2}\,|A_0^{1/2}{B_0}^{-1}P_0{\bf a}|\,
\|f\|_{L^2(\nu_{B,A})}\,
\sqrt{{\cal E}_{B,A}(f)}, 
\]
which completes the proof.
\end{proof}
  
\section{Proof of Theorem \ref{thm010904-26}: solution to a regularized Burgers equation}
\label{appB}

The case $V=0$ has been considered in   \cite[Theorem 2.1]{FQ}.
Suppose that $Q_\eps$ is the law of $ u^{(\eps)}(\cdot)$ - the unperturbed
stationary solution described in  
 Theorem \ref{thm010904-26} -  on the space ${\cal
   C}:=C\big([0,+\infty);C(\bT)\big) $. Let 
 $\mathbf u(t;\om):=\om(t)$, $\om \in{\cal C}$ be the canonical
process  and let $\big({\cal M}_t\big)_{t\ge0}$ be its  natural filtration.
{Under the measure $Q_\eps$,}  the process $\big(\tilde{\cal
  W}^{(\eps)}(t)\big)_{t\ge0}$ determined by
\begin{align*}
\int_{\bT}\tilde{\cal
                 W}^{(\eps)}(t,x)dx=0&\quad\mbox{and}\quad \la {\nabla} \tilde {\cal W}^{(\eps)}(t;\om),\varphi\ra=\la \mathbf u(t)
    ,\varphi\ra\\
  &
      -\int_0^t {\tfrac12}\Big[\la \mathbf u(s)  ,\Delta
    \varphi\ra -\la \mathbf u^2(s)\star
    \eta_2^{(\eps)} ,\nabla
    \varphi\ra \Big]ds,\quad \varphi\in C^\infty(\bT),
\end{align*}
is the Wiener process satisfying  \eqref{weps} (see \eqref{030904-26}).

 Define a Borel measure $Q_\eps^{(V)}$
on ${\cal C}$ such that for any bounded, ${\cal M}_T$-measurable
function $\Phi$  
\begin{equation}
  \label{020904-26}
  \int_{{\cal C}}\Phi(\omega) \,Q_\eps^{(V)}(d\omega)= \int_{{\cal
      C}}\Phi(\om) \exp\Big\{  \la {\tilde {\cal W}}^{(\eps)}(T;\om), V\ra-\tfrac12T\la R_\eps V,V\ra 
 \Big\} Q_\eps(d\om).
\end{equation}
Using It\^o formula one can verify that under this new measure $Q_\eps^{(V)}$, the process
$$
 {\cal W}^{(\eps)}(t;\om):= {\tilde {\cal W}}^{(\eps)}(t;\om)
-t   {V^{(\eta_2^{\eps})}},\quad t\ge0
$$
is $\big({\cal M}_t\big)$-adapted  Wiener process
with the covariance given by \eqref{weps}. Therefore, 
\begin{align}
  \label{011505-26}
&  \la {\nabla} {\cal W}^{(\eps)}(t;\om),\varphi\ra=\la \mathbf u(t)
    ,\varphi\ra -\int_0^t \Big[\tfrac12\la \mathbf u(s)  ,\Delta
    \varphi\ra \\
  &
  +\la \nabla V^{(\eta_2^{\eps})},
    \varphi\ra    -\tfrac12\la \mathbf u^2(s)\star
    \eta_2^{(\eps)} ,\nabla
    \varphi\ra \Big]ds,\quad \varphi\in C^\infty(\bT)\notag
\end{align}
The canonical process satisfies \eqref{eq:SBEV} in the same sense
as discussed in the proof of   Theorem  \ref{thm010904-26} in the case
$V=0$. 
The remaining part of the argument follows the same lines as in
the proof   of   Theorem  \ref{thm010904-26}.

From \eqref{011505-26} we conclude that ${\bf u}(t)$ is a mild solution of the equation for $Q_\eps^{(V)}$ a.s. $\om$, i.e.
\begin{equation}
  \label{SBEE3}
  \begin{split}
&   {\bf u}(t) = e^{1/2\Delta t}{\bf u}(0)   +  \int_0^t 
e^{1/2\Delta (t-s)} \big({\tfrac12}{\bf u}^2(s)\star
\nabla\eta_2^{(\eps)}+\nabla V^{(\eta_2^{\eps})}\big)  ds \\
&
+\int_0^t \nabla
e^{1/2\Delta (t-s)}d{ {\cal W}}^{(\eps)} (s) .
\end{split}
\end{equation}
For $M>0$, consider the Lipschitz function $\Phi_M: C(\bT)\to C(\bT)$
such that
$$
\Phi_M(u)=\begin{cases}
  u, & \|u\|_{L^2(\bT)}\le M,\\
  & \\
  0,& \|u\|_{L^2(\bT)}> M+1.
  \end{cases}
  $$
  Let ${\bf u}_M(t;\om)$ be the solution of 
\[
  \begin{split}
&   {\bf u}_M(t) = e^{1/2\Delta t}{\bf u}(0)   +  \int_0^t
e^{1/2\Delta (t-s)} \big({\tfrac12}[{\bf u}_M(s) \Phi_M\big({\bf
  u}_M(s)\big)]  \star
 \nabla\eta_2^{(\eps)}+\nabla V^{(\eta_2^{\eps})}\big)   ds \\
&
+\int_0^t \nabla
e^{1/2\Delta (t-s)}d{ {\cal W}}^{(\eps)} (s) .
\end{split}
\]
Thanks to the fact that ${\frak B}_M: C(\bT)\to C(\bT)$ given by
$$
{\frak B}_M[u]:=\big( u\Phi_M(u)\big)\star
 \nabla\eta_2^{(\eps)}
 $$
 is Lipschitz, for any ${\bf u}(0)$ and   a.s. realization of $
{\cal{ W}}^{(\eps)}$, there exists a unique solution of the above equation. It is
 the limit, uniform on compact time intervals of the Picard
 iteration scheme
\[
  \begin{split}
&   {\bf u}_{M,n+1}(t) = e^{1/2\Delta t}{\bf u}(0)   +  \int_0^t
e^{1/2\Delta (t-s)} \big(\mm{\tfrac12}{\bf u}_{M,n}(s) \Phi_M\big({\bf
  u}_{M,n}(s)\big)  \star
 \nabla\eta_2^{(\eps)}+\nabla V^{(\eta_2^{\eps})}\big)  ds \\
&
+\int_0^t \nabla
e^{1/2\Delta (t-s)}d\mm{\cal{ W}}^{(\eps)} (s) ,\quad n=0,1,\ldots,\\
&
{\bf u}_{M,0}(t) \equiv{\bf u}(0),\quad t\ge0.
\end{split}
\]
Let
$$
\tau_M:=\inf[t\ge0:\, \|\mm{{\bf u}_M}(t)\|_{L^2(\bT)}\ge M].
$$
Since $t\mapsto {\bf u}(t)$ satisfies the same equation as ${\bf u}_M$ for
$t\in[0,\tau_M]$ we have, by Gronwall's inequality that
$$
 {\bf u}_{M }(t) ={\bf u}(t),\quad t\in[0,\tau_M].
 $$
 Hence, for $Q_\eps^{(V)}$ we have
 \begin{equation}
   \label{uuM}
   \lim_{M\to+\infty}{\bf u}_{M }(t) ={\bf u}(t),\quad t\ge
   0\quad\mbox{uniformly on compact intervals.}
 \end{equation}
 {Fix any $m$, we have \[
 \lim_{M\to+\infty}Q_\eps \big[\tau_{M}\leq m\big]=\lim_{M\to+\infty} Q_\eps\bigg[\sup_{t\in[0,m]}\|{\bf u}(t)\|_{L^2(\bT)}\geq M\bigg]=0.
 \]
 By \eqref{020904-26}, 
 we have the same result holds for $Q_\eps^{(V)}$:
 \begin{equation}
   \label{uuM1}\lim_{M\to+\infty}Q_\eps^{(V)}\big[\tau_{M}\leq m\big]=0.
   \end{equation}
   In other words, with $Q_\eps^{(V)}-$probability $1$, there is no blow up in  $[0,m]$.}

   Now consider the equation {driven by the given noise $W^{(\eps)}$}
\[
  \begin{split}
&    u_{\eps,M}(t;u) = e^{1/2\Delta t}u   + \int_0^t
e^{1/2\Delta (t-s)} \big(\mm{\tfrac12}[u_{\eps,M}(s;u)  \Phi_M\big(u_{\eps,M}(s;u) \big)]  \star
 \nabla\eta_2^{(\eps)}+\nabla V^{(\eta_2^{\eps})}\big)   ds \\
&
+\int_0^t \nabla
e^{1/2\Delta (t-s)}d  W^{(\eps)} (s), 
\end{split}
\]
over the space $C(\bT)\times \Om$ with the product  probability
measure $\PP_{\nu^{(\eps)}}:=\nu^{(\eps)}\otimes \PP$. It has a unique solution for each $M$.
In fact the laws of $u_{\eps,M}(\cdot;u)$ and ${\bf u}_{M }(\cdot)$
over ${\cal C}$ coincide. 
Let
$$
\tau_{\eps,M}:=\inf[t\ge0:\, \|u_{\eps,M}(\cdot;u)\|_{L^2(\bT)}\ge M].
$$
We have
\[
  \begin{split}
&u_{\eps,M}(t;u)=u_{\eps,M'}(t;u),\quad
t\in[0,\tau_{\eps,M}]\quad \mbox{and}\\
&
\tau_{\eps,M}\le \tau_{\eps,M'}\quad \,0<M<M'.
\end{split}
\]
By the fact that they have the same law, we conclude
$$
\PP_{\nu_\eps}\big[\tau_{\eps,M}\le m\big]=Q_\eps^{(V)}\big[\tau_{M}\le
m\big],\quad m,M>0.
$$
Hence  
$$
  {\lim_{M\to+\infty}\PP_{\nu_\eps}\big[\tau_{\eps,M }\leq m\big]=\lim_{M\to+\infty}Q_\eps^{(V)}\big[\tau_{M}\leq m\big]=0,}
  $$
  by virtue of \eqref{uuM1}. {Since $m$ is arbitrary,} this proves the conclusion of the theorem.

\subsection*{Acknowledgement. } YG would like to thank Jonathan Mattingly for introducing the relevant problem to him. The research of YG is partially supported by the National
Science Foundation under grant no. DMS-2203014. T.K acknowledges the support of the NCN grant
2024/53/B/ST1/00286.

\bibliographystyle{amsalpha}

  \end{document}